\numberwithin{equation}{section}
\newtheorem{theorem}{Theorem}[section]
\newtheorem{proposition}[theorem]{Proposition}
\newtheorem{lemma}[theorem]{Lemma}
\newtheorem{corollary}[theorem]{Corollary}
\newtheorem{definition}[theorem]{Definition}
\newtheorem{assumption}[theorem]{Assumption}
\newcommand{\va}{\mathbf{a}}
\newcommand{\vvA}{\mathbf{A}}
\newcommand{\ve}{\mathbf{e}}
\newcommand{\vm}{\mathbf{m}}
\newcommand{\vp}{\mathbf{p}}
\newcommand{\vq}{\mathbf{q}}
\newcommand{\vu}{\mathbf{u}}
\newcommand{\vv}{\mathbf{v}}
\newcommand{\vw}{\mathbf{w}}
\newcommand{\vx}{\mathbf{x}}
\newcommand{\vy}{\mathbf{y}}
\newcommand{\bvy}{\boldsymbol{y}}
\newcommand{\vz}{\mathbf{z}}
\newcommand{\vzero}{\mathbf{0}}
\newcommand{\vA}{\mathbf{A}}
\newcommand{\vI}{\mathbf{I}}
\newcommand{\vU}{\mathbf{U}}
\newcommand{\vW}{\mathbf{W}}
\newcommand{\vX}{\mathbf{X}}
\newcommand{\vY}{\mathbf{Y}}
\newcommand{\vepsilon}{\boldsymbol{\epsilon}}
\newcommand{\calA}{\mathcal{A}}
\newcommand{\calD}{\mathcal{D}}
\newcommand{\calI}{\mathcal{I}}
\newcommand{\calM}{\mathcal{M}}
\newcommand{\calN}{\mathcal{N}}
\newcommand{\calO}{\mathcal{O}}
\newcommand{\calP}{\mathcal{P}}
\newcommand{\calR}{\mathcal{R}}
\newcommand{\calT}{\mathcal{T}}
\newcommand{\tto}{{(t)}}
\newcommand{\bR}{\mathbb{R}}
\newcommand{\bE}{\mathbb{E}}
\newcommand{\avetT}{\frac{1}{T}\sum_{t=1}^T}
\newcommand{\sumjk}{\sum_{j=1}^k}
\newcommand{\ie}{{\it i.e.}}
\newcommand{\argmin}{\mathop{\rm argmin}}
\DeclareMathOperator*{\Min}{minimize}
\title[Efficient Algorithms for Sum-of-Minimum Optimization]{Efficient Algorithms for Sum-of-Minimum Optimization}
\author{Lisang Ding}
\address{(LD) Department of Mathematics, University of California, Los Angeles (UCLA), Los Angeles, CA 90095.}
\email{lsding@math.ucla.edu}
\author{Ziang Chen}
\address{(ZC) Department of Mathematics, Massachusetts Institute of Technology (MIT), Cambridge, MA 02139.}
\email{ziang@mit.edu}
\author{Xinshang Wang}
\address{(XW) Decision Intelligence Lab, Damo Academy, Alibaba US, Bellevue, WA 98004.}
\email{xinshang.w@alibaba-inc.com}
\author{Wotao Yin}
\address{(WY) Decision Intelligence Lab, Damo Academy, Alibaba US, Bellevue, WA 98004.}
\email{wotao.yin@alibaba-inc.com}
\date{\today}
\thanks{A major part of the work of ZC was completed during his internship at Alibaba US DAMO Academy. Corresponding author: Ziang Chen, ziang@mit.com}
\begin{document}
	\begin{abstract}
		In this work, we propose a novel optimization model termed ``sum-of-minimum" optimization. This model seeks to minimize the sum or average of $N$ objective functions over $k$ parameters, where each objective takes the minimum value of a predefined sub-function with respect to the $k$ parameters. This universal framework encompasses numerous clustering applications in machine learning and related fields. We develop efficient algorithms for solving sum-of-minimum optimization problems, inspired by a randomized initialization algorithm for the classic $k$-means \cite{arthur2007k} and Lloyd's algorithm \cite{lloyd1982least}. We establish a new tight bound for the generalized initialization algorithm and prove a gradient-descent-like convergence rate for generalized Lloyd's algorithm. The efficiency of our algorithms is numerically examined on multiple tasks, including generalized principal component analysis, mixed linear regression, and small-scale neural network training. Our approach compares favorably to previous ones based on simpler-but-less-precise optimization reformulations.
	\end{abstract}
	
	\maketitle
	
	\section{Introduction}
	In this paper, we propose the following ``sum-of-minimum" optimization model:
	\begin{equation}
		\label{eq:sum-of-min-opt}
		\Min_{\vx_1,\vx_2,\ldots,\vx_k}~  F(\vx_1,\vx_2,\ldots,\vx_k):=  \frac{1}{N}\sum_{i=1}^N\min\{ f_i(\vx_1),f_i(\vx_2),\ldots,f_i(\vx_k) \},
	\end{equation}
	where $\vx_1,\vx_2,\ldots,\vx_k$ are unknown parameters to determine. The cost function $F$ is the average of $N$ objectives where the $i$-th objective is $f_i$ evaluated at its ``optimal" out of the $k$ parameter choices.  
	This paper aims to develop efficient algorithms for solving \eqref{eq:sum-of-min-opt} and analyze their performance. 
	
	Write $[k]=\{1,2,\dots,k\}$ and $[N]=\{1,2,\dots,N\}$. Let $(C_1,C_2,\ldots,C_k)$ be a partition of $[N]$, \ie, $C_i$'s are disjoint subsets of $[N]$ and their union equals $[N]$. Let $\calP^k_N$ denote the set of all such partitions. 
	Then, \eqref{eq:sum-of-min-opt} is equivalent to
	\begin{equation}
		\label{eq:sum-of-min-opt-2}
		\Min_{(C_1,C_2,\ldots,C_k)\in\calP^k_N}\min_{\vx_1,\vx_2,\ldots,\vx_k}\frac{1}{N} \sum_{j=1}^k\sum_{i\in C_j} f_i(\vx_j).
	\end{equation}
	It is easy to see $(C_1^*,C_2^*,\ldots,C_k^*)$ and $(\vx_1^*,\vx_2^*,\ldots,\vx_k^*)$ are optimal to \eqref{eq:sum-of-min-opt-2} if and only if $(\vx_1^*,\vx_2^*,\ldots,\vx_k^*)$ is optimal to \eqref{eq:sum-of-min-opt} and
	\begin{equation*}
		i\in C_j^* \implies f_i(\vx_j^*) = \min\{ f_i(\vx_1^*),f_i(\vx_2^*),\ldots,f_i(\vx_k^*) \}.
	\end{equation*}
	Reformulation \eqref{eq:sum-of-min-opt-2} reveals its clustering purpose. It finds the optimal partition $(C_1^*,C_2^*,\ldots,C_k^*)$ such that using the parameter $\vx_j^*$ to minimize the average of $f_i$'s in the cluster $C_j$ leads to the minimal total cost.
	
	Problem \eqref{eq:sum-of-min-opt} generalizes \textbf{$k$-means clustering}. Consider $N$ data points $\vy_1,\vy_2,\ldots,\vy_N$ and a distance function $d(\cdot,\cdot)$. The goal of $k$-means clustering is to find clustering centroids $\vx_1,\vx_2,\ldots,\vx_k$ that minimize
	$$
	F(\vx_1,\vx_2,\ldots,\vx_k)=\frac{1}{N}\sum_{i=1}^N \min_{j\in[k]}\{d(\vx_j,\vy_i)\},
	$$
	which is the average distance from each data point to its nearest cluster center. The literature presents various choices for the distance function $d(\cdot,\cdot)$.
	When $d(\vx,\vy)=\frac{1}{2}\|\vx-\vy\|^2$, this optimization problem reduces to the classic $k$-means clustering problem, for which numerous algorithms have been proposed \cites{krishna1999genetic,arthur2007k,na2010research,sinaga2020unsupervised,ahmed2020k}. Bregman divergence is also widely adopted as a distance measure\cites{banerjee2005clustering,manthey2013worst,liu2016clustering}, defined as 
	$$
	d(\vx,\vy)=h(\vx)-h(\vy)-\langle \nabla h(\vy),\vx-\vy \rangle,
	$$
	with $h$ being a differentiable convex function. 
	
	A special case of \eqref{eq:sum-of-min-opt} is \textbf{mixed linear regression}, which generalizes linear regression and models the dataset $\{(\va_i,b_i)\}_{i=1}^N$ by multiple linear models. A linear model is a function $g(\va;\vx)=\va^\top \vx$, which utilizes $\vx$ as the coefficient vector for each model. Make $k$ copies of the linear model and set the $j$-th linear coefficient as $\vx_j$. The loss for each data pair $(\va_i,b_i)$ is computed as the squared error from the best-fitting linear model, specifically $\min_{j\in[k]}\{\frac{1}{2}(g(\va_i;\vx_j)-b_i)^2\}$. We aim to search for optimal parameters $\{\vx_j\}_{j=1}^k$ that minimizes the average loss
	\begin{equation}
		\label{eq:mixed-linear-regression}
		\frac{1}{N}\sum_{i=1}^N \min_{j\in[k]}\left\{  \frac{1}{2}\left(g(\va_i;\vx_j)-b_i\right)^2 \right\}.
	\end{equation}
	Paper \cite{zhong2016mixed} simplifies this non-smooth problem to the sum-of-product problem:
	\begin{equation}
		\label{eq:mixed-linear-regression-2}
		\Min_{\vx_1,\vx_2,\dots,\vx_k}~\frac{1}{N}\sum_{i=1}^N \prod_{j\in[k]}\left(g(\va_i;\vx_j)-b_i\right)^2,
	\end{equation}
	which is smooth. Although \eqref{eq:mixed-linear-regression-2} is easier to approach due to its smooth objective function, problem \eqref{eq:mixed-linear-regression} is more accurate. Various algorithms are proposed to recover $k$ linear models from mixed-class data~\cites{yi2014alternating,shen2019iterative,kong2020meta,zilber2023imbalanced}.
	
	In \eqref{eq:mixed-linear-regression}, the function $g(\cdot;\vx)$ parameterized by $\vx$ can be any nonlinear function such as neural networks, and we call this extension \textbf{mixed nonlinear regression}.
	
	An application of \eqref{eq:sum-of-min-opt} is \textbf{generalized principal component analysis (GPCA)}~\cites{vidal2005generalized,tsakiris2017filtrated}, which aims to recover $k$ low-dimensional subspaces, $V_1, V_2, \dots, V_k$, from the given data points $\vy_1,\vy_2,\dots,\vy_N$, which are assumed to be located on or close to the collective union of these subspaces $V_1\cup V_2\cup\cdots\cup V_k$. This process, also referred to as subspace clustering, seeks to accurately segment data points into their respective subspaces~\cites{ma2008estimation,vidal2011subspace,elhamifar2013sparse}. Each subspace $V_j$ is represented as $V_j = \{\vy\in\bR^d: \vy^\top \vvA_j = 0\}$ where $\vvA_j\in \bR^{d\times r}$ and $\vvA_j^\top \vvA_j = I_r$, with $r$ being the co-dimension of $V_j$. From an optimization perspective, the GPCA task can be formulated as
	\begin{equation}
		\label{eq:GPCA}
		\Min_{\vvA_j^\top \vvA_j = I_r}~\frac{1}{N}\sum_{i=1}^N \min_{j\in[k]}\left\{\frac{1}{2}\|\vy_i^\top \vvA_j\|^2\right\}.
	\end{equation}
	Similar to \eqref{eq:mixed-linear-regression-2}, \cite{peng2023block} works with the less precise reformulation using the product of $\|\vy_i^\top \vvA_j\|^2$ for smoothness and introduces block coordinate descent algorithm.

	When $k=1$, problem \eqref{eq:sum-of-min-opt} reduces to the finite-sum optimization problem
	\begin{equation}\label{eq:sum-opt}
		\min_{\vx}~F(\vx) = \frac{1}{N} \sum_{i=1}^N f_i(\vx),
	\end{equation}
	widely used to train machine learning models,
	where $f_i(\vx)$ depicts the loss of the model at parameter $\vx$ on the $i$-th data point.
	When the underlying model lacks sufficient expressiveness, problem ~\eqref{eq:sum-opt} alone may not yield satisfactory results.To enhance a model's performance, one can train the model with multiple parameters, $\vx_1,\vx_2,\cdots,\vx_k, k\geq 2$, and utilize only the most effective parameter for every data point. This strategy has been successfully applied in various classic tasks, including the aforementioned $k$-means clustering,
	mixed linear regression,
	and the generalized principal component analysis.  
	These applications share a common objective: to segment the dataset into $k$ groups and identify the best parameter for each group.
	Although no single parameter might perform well across the entire dataset, every data point is adequately served by at least one of the $k$ parameters.
	By aggregating the strengths of multiple smaller models, this approach not only enhances model expressiveness but also offers a cost-efficient alternative to deploying a singular larger model.

	Although one might expect that algorithms and analyses for the sum-of-minimum problem \eqref{eq:sum-of-min-opt} to be weaker as \eqref{eq:sum-of-min-opt} subsumes the discussed previous models, we find our algorithms and analyses for \eqref{eq:sum-of-min-opt} to enhance those known for the existing models. Our algorithms extend the \texttt{$k$-means++} algorithm~\cite{arthur2007k} and Lloyd's algorithm~\cite{lloyd1982least}, which are proposed for classic $k$-means problems. We obtain new bounds of these algorithms for \eqref{eq:sum-of-min-opt}. Our contributions are summarized as follows:
	\begin{itemize}
		\item We propose the sum-of-minimum optimization problem, adapt \texttt{$k$-means++} to the problem for initialization, and generalize Lloyd's algorithm to approximately solve the problem. 
		\item We establish theoretical guarantees for the proposed algorithms. Specifically, under the assumption that each $f_i$ is $L$-smooth and $\mu$-strongly convex, we prove the output of the initialization is $\calO(\frac{L^2}{\mu^2}\ln k)$-optimal and that this bound is tight with respect to both $k$ and the condition number $\frac{L}{\mu}$. When reducing to $k$-means optimization, our result recovers that of~\cite{arthur2007k}. Furthermore, we prove an $\calO(\frac{1}{T})$ convergence rate for generalized Lloyd's algorithms.
		\item We numerically verify the efficiency of the proposed framework and algorithms on several tasks, including generalized principal component analysis, $\ell_2$-regularized mixed linear regression, and small-scale neural network training. The results reveal that our optimization model and algorithm lead to a higher successful rate in finding the ground-truth clustering, compared to existing approaches that resort to less accurate reformulations for the sake of smoother optimization landscapes. Moreover, our initialization shows significant improvements in both convergence speed and chance of obtaining better minima.
	\end{itemize}
	Our work significantly generalizes classic $k$-means to handles more complex nonlinear models and provides new perspectives for improving the model performance.
	The rest of this paper is organized as follows. We introduce the preliminaries and related works in Section~\ref{sec:preliminary}. We present the algorithms in Section~\ref{sec:alg}. The algorithms are analyzed theoretically in Section~\ref{sec:theory} and numerically in Section~\ref{sec:num-exp}. The paper is concluded in Section~\ref{sec:conclude}.
	
	Throughout this paper, the $\ell_2$-norm and $\ell_2$-inner product are denoted by $\|\cdot\|$ and $\langle\cdot,\cdot\rangle$, respectively. We employ $|\cdot|$ as the cardinal number of a set.
	
	\section{Related Work and Preliminary}
	\label{sec:preliminary}
	\subsection{Related work}
	Lloyd's algorithm~\cite{lloyd1982least}, a well-established iterative method for the classic $k$-means problem, alternates between two key steps~\cite{mackay2003example}:
	1) assigning $\vy_i$ to $\vx_j^{(t)}$ if $\vx_j^{(t)}$ is the closest to $\vy_i$ among $\{\vx_1^{(t)},\vx_2^{(t)},\dots,\vx_k^{(t)}\}$; 2) updating $\vx_j^{(t+1)}$ as the centroid of all $\vy_i$'s assigned to $\vx_j^{(t)}$. Although Lloyd's algorithm can be proved to converge to stationary points, the results can be highly suboptimal due to the inherent non-convex nature of the problem. Therefore, the performance of Lloyd's algorithm highly depends on the initialization. To address this, a randomized initialization algorithm, 
	\texttt{$k$-means++} \cite{arthur2007k}, generates an initial solution in a sequential fashion. Each centroid $\vx_j^{(0)}$ is sampled recurrently according to the distribution
	\begin{equation}\label{eq:kmeans++}
		\mathbb{P}(\vx_j^{(0)} = \vy_i) \propto \min_{1\leq j'\leq j-1} \|\vx_{j'}-\vy_i\|^2,\quad i\in[N].
	\end{equation}
	The idea is to sample a data point farther from the current centroids with higher probability, ensuring the samples to be more evenly distributed across the dataset. It is proved in \cite{arthur2007k} that
	\begin{equation}
		\label{eq:k-means++bound}
		\mathbb{E} F(\vx_1^{(0)},\vx_2^{(0)},\ldots,\vx_k^{(0)}) \leq 8(\ln k+2) F^*,
	\end{equation}
	where $F^*$ is the optimal objective value of $F$. This seminal work has inspired numerous enhancements to the \texttt{$k$-means++} algorithm, as evidenced by contributions from \cites{bahmani2012scalable,zimichev2014spectral,bachem2016fast,bachem2016approximate,wu2021user,ren2022novel}.
	Our result generalizes the bound in \eqref{eq:k-means++bound}, broadening its applicability in sum-of-minimum optimization.

	\subsection{Definitions and assumptions}
	
	In this subsection, we outline the foundational settings for our algorithm and theory. For each sub-function $f_i$, we establish the following assumptions.
	
	\begin{assumption}
		\label{asp:L-smooth}
		Each $f_i$ is $L$-smooth, satisfying
		\begin{equation*}
			\|\nabla f_i(x) - \nabla f_i(y)\|\leq L\|x-y\|,\quad\forall~x,y\in\bR^d,\ i\in[N]. 
		\end{equation*}
		
	\end{assumption}
	\begin{assumption}
		\label{asp:mu-strong-convex}
		Each $f_i$ is $\mu$-strongly convex, for all $x,y\in\bR^d$ and $i\in[N]$,
		\begin{equation*}
			f_i(y)\geq f_i(x) + \nabla f_i(x)^\top (y-x) + \frac{\mu}{2} \|x-y\|^2.
		\end{equation*}
	\end{assumption}
	
	Let $\vx^*_i$ denote the optimizer of $f_i(\vx)$ such that $f_i^*=f_i(\vx^*_i)$, and let
	\begin{equation*}
		S^*=\{\vx_i^*:1\leq i\leq N\}
	\end{equation*}
	represent the solution set. If $S^*$ comprises $l<k$ different elements, the problem \eqref{eq:sum-of-min-opt} possesses infinitely many global minima. Specifically, we can set the variables $\vx_1,\vx_2,\ldots,\vx_l$ to be the $l$ distinct elements in $S^*$, while leaving $\vx_{l+1},\vx_{l+2},\ldots,\vx_k$ as free variables. Given these $k$ variables, $F(\vx_1,\vx_2,\ldots,\vx_k)=\frac{1}{N}\sum_{i=1}^N f_i^*$. If $S^*$ contains more than $k$ distinct components, we have the following proposition.
	
	\begin{proposition}
		\label{prop:S-k-sep}
		Under Assumption \ref{asp:mu-strong-convex}, if $|S^*|\geq k$, the optimization problem \eqref{eq:sum-of-min-opt} admits finitely many minimizers.
	\end{proposition}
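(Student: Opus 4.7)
My plan is to exploit the partition reformulation \eqref{eq:sum-of-min-opt-2}: once I show that every global minimizer of \eqref{eq:sum-of-min-opt} arises from some partition $(C_1,\ldots,C_k)\in\calP^k_N$ with every part non-empty, strong convexity will pin down each $\vx_j^*$ uniquely as the minimizer of $\sum_{i\in C_j^*} f_i$, and finiteness will follow at once because $\calP^k_N$ is finite.

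The first step is a strict monotonicity in the number of parameters. Write $F^{(l)}$ for the optimal value of the analogous sum-of-minimum problem with $l$ parameters instead of $k$. I claim that under $|S^*|\geq k$ one has $F^{(l)}>F^{(l+1)}$ for every $l<k$. Indeed, given any $l$-parameter optimizer $(\vx_1,\ldots,\vx_l)$, at most $l<k$ elements of $S^*$ can appear among the $\vx_j$, so some $\vx_{i^*}^*\in S^*$ is absent from that list. By Assumption \ref{asp:mu-strong-convex}, $\vx_{i^*}^*$ is the unique minimizer of $f_{i^*}$, whence $f_{i^*}(\vx_j)>f_{i^*}^*$ for every $j$; adjoining $\vx_{l+1}:=\vx_{i^*}^*$ then strictly decreases the $i^*$-th summand and weakly decreases every other summand.

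Next I use this monotonicity to constrain minimizers of \eqref{eq:sum-of-min-opt}. If $(\vx_1^*,\ldots,\vx_k^*)$ is a minimizer whose coordinates take only $\tilde k<k$ distinct values, its cost equals $F^{(\tilde k)}\geq F^{(k-1)}>F^{(k)}$, a contradiction; so the coordinates are pairwise distinct. To rule out empty clusters, define the canonical assignment $C_j^*=\{i\in[N]:j=\min\{l:f_i(\vx_l^*)=\min_m f_i(\vx_m^*)\}\}$. If some $C_{j_0}^*$ were empty, then $\min_l f_i(\vx_l^*)=\min_{l\neq j_0}f_i(\vx_l^*)$ for every $i$, so replacing $\vx_{j_0}^*$ by $\vx_1^*$ leaves $F$ unchanged and produces another minimizer with a repeated coordinate, contradicting the distinctness just established.

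Finally, on a partition with every $C_j^*$ non-empty, $\sum_{i\in C_j^*}f_i$ is $|C_j^*|\mu$-strongly convex, so its minimizer is unique and must equal $\vx_j^*$. Consequently each minimizer $(\vx_1^*,\ldots,\vx_k^*)$ is uniquely recovered from its canonical partition, and the total count is bounded by $|\calP^k_N|<\infty$. The step I expect to be the most delicate is ruling out empty clusters: the hypothesis $|S^*|\geq k$ is indispensable there, since without it a spurious parameter can float freely and produce a continuum of minimizers, as the paper's own preceding remark on the case $|S^*|<k$ already illustrates.
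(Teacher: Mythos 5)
Your proof is correct, and its backbone coincides with the paper's: map each minimizer $(\vx_1^*,\dots,\vx_k^*)$ to the canonical partition $(C_1^*,\dots,C_k^*)$ it induces, show every part is non-empty, and then observe that strong convexity makes $\vx_j^*$ the unique minimizer of $\sum_{i\in C_j^*}f_i$, so this map is injective into the finite set of non-empty partitions of $[N]$. Where you diverge is the route to non-emptiness. The paper handles $|S^*|=k$ separately (trivially) and, for $|S^*|>k$, argues directly: if some $C_j^\vX=\emptyset$ then $S^*\backslash\{\vx_1,\dots,\vx_k\}$ is non-empty, and replacing the idle $\vx_j$ with a fresh $\vz\in S^*$ strictly decreases $F$. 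You instead prove an auxiliary strict-monotonicity fact, $F^{(l)}>F^{(l+1)}$ for $l<k$, deduce that the coordinates of any minimizer are pairwise distinct, and then rule out an empty $C_{j_0}^*$ by noting that such an $\vx_{j_0}^*$ is redundant and could be overwritten by another coordinate, contradicting distinctness. Both arguments work; the paper's is a bit shorter and avoids introducing the $l$-parameter family, while your monotonicity lemma is reusable and handles $|S^*|=k$ and $|S^*|>k$ uniformly without a case split. Two cosmetic slips worth fixing: the cost of the deduplicated tuple is \emph{at least} $F^{(\tilde k)}$, not equal to it (the inequality is all you need, since it already conflicts with the minimizer's value $F^{(k)}$), and when you overwrite $\vx_{j_0}^*$ you should pick $\vx_l^*$ with $l\neq j_0$ rather than always $\vx_1^*$.
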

	
	Expanding on the correlation between the number of global minimizers and the size of $S^*$, we introduce well-posedness conditions for $S^*$.
	\begin{definition}[$k$-separate and $(k,r)$-separate]
		We call $S^*$ $k$-separate if it contains at least $k$ different elements, \ie, $|S^*|\geq k$.
		Furthermore, we call $S^*$ $(k,r)$-separate if there exists $1\leq i_1<i_2<\cdots<i_k\leq N$ such that $\|\vx_{i_j}^*-\vx_{i_{j'}}^*\|> 2r$ for all $j\not=j'$.
	\end{definition}
	
	Finally, we address the optimality measurement in \eqref{eq:sum-of-min-opt}. The norm of the (sub)-gradient is an inappropriate measure for global optimality due to the problem's non-convex nature. Instead, we utilize the following optimality gap.
	
	\begin{definition}[Optimality gap]
		Given a point $\vx$, the optimality gap of $f_i$ at $\vx$ is $f_i(\vx)-f^*_i$.
		Given a finite point set $\calM$, the optimality gap of $f_i$ at $\calM$ is
		$\min_{\vx\in\calM} f_i(\vx)-f^*_i$.
		When $\calM=\{\vx_1,\vx_2,\dots,\vx_k\}$, the averaged optimality gap of $f_1,f_2,\dots,f_N$ at $\calM$ is the shifted objective function
		\begin{equation}\label{eq:average-opt-gap}
			F(\vx_1,\vx_2,\dots,\vx_k)-\frac{1}{N}\sum_{i=1}^N f_i^*.
		\end{equation}
	\end{definition}
	
	The averaged optimality gap in \eqref{eq:average-opt-gap} will be used as the optimality measurement throughout this paper. Specifically, in the classic $k$-means problem, one has $f_i^*=0$, so the function $F(\vx_1,\vx_2,\dots,\vx_k)$ directly indicates global optimality.
	
	\section{Algorithms}
	\label{sec:alg}
	In this section, we introduce the algorithm for solving the sum-of-minimum optimization problem \eqref{eq:sum-of-min-opt}. Our approach is twofold, comprising an initialization phase based on \texttt{$k$-means++} and a generalized version of Lloyd's algorithm.
	
	\subsection{Initialization} As the sum-of-minimum optimization \eqref{eq:sum-of-min-opt} can be considered a generalization of the classic $k$-means clustering, we adopt \texttt{$k$-means++}. In \texttt{$k$-means++}, clustering centers are selected sequentially from the dataset, with each data point chosen based on a probability proportional to its squared distance from the nearest existing clustering centers, as detailed in \eqref{eq:kmeans++}. We generalize this idea and propose the following initialization algorithm that outputs initial parameters $\vx_1^{(0)},\vx_2^{(0)},\dots,\vx_k^{(0)}$ for the problem \eqref{eq:sum-of-min-opt}.
	
	First, we select an index $i_1$ at random from $[N]$, following a uniform distribution, and then utilize a specific method to determine the minimizer $\vx^*_{i_1}$, setting
	\begin{equation}\label{eq:x10}
		\vx_1^{(0)}=\vx^*_{i_1} = \argmin_\vx f_{i_1}(\vx).
	\end{equation}
	For $j=2,3,\dots,k$, we sample $i_j$ based on the existing variables $\calM_j=\{\vx_{1}^{(0)},\vx_{2}^{(0)},\ldots,\vx_{j-1}^{(0)}\}$, with each index $i$ sampled based on a probability proportional to the optimality gap of $f_i$ at $\calM_j$.
	Specifically, we compute the minimal optimality gaps
	\begin{equation}
		\label{eq:v-score-A}
		v_i^{(j)}=\min_{1\leq j'\leq j-1}\left(f_i(\vx^{(0)}_{j'})-f^*_i \right),\quad i\in[N],
	\end{equation}
	as probability scores. Each score $v_i^{(j)}$ can be regarded as an indicator of how unresolved an instance $f_i$ is with the current variables $\{\vx_{j'}^{(0)} \}_{j'=1}^{j-1}$. We then normalize these scores
	\begin{equation}\label{eq:w-wight}
		w_i^{(j)}=\frac{v_i^{(j)}}{\sum_{i'=1}^N v_{i'}^{(j)}},\quad i\in[N],
	\end{equation}
	and sample $i_j\in[N]$ following the probability distribution $\vw^{(j)}=\left(w^{(j)}_1, \dots,w^{(j)}_N\right)$. The $j$-th initialization is determined by optimizing $f_{i_j}$,
	\begin{equation}\label{eq:xj0}
		\vx_j^{(0)}= \vx_{i_j}^* = \argmin_\vx f_{i_j}(\vx).
	\end{equation}
	We terminate the selection process once $k$ variables $\vx_1^{(0)},\vx_2^{(0)},\ldots,\vx_k^{(0)}$ are determined. The pseudo-code of this algorithm is shown in Algorithm \ref{alg:init-A}. 
	
	\begin{algorithm}[htb!]
		\caption{Initialization}
		\label{alg:init-A}
		\begin{algorithmic}[1]
			\State Sample $i_1$ uniformly at random from $[N]$ and compute $\vx_1^{(0)}$ via \eqref{eq:x10}.
			\For{$j=2,3,\dots,k$}
			\State Compute $\vv^{(j)}=\left(v_1^{(j)},v_2^{(j)},\ldots,v_N^{(j)}\right)$ via \eqref{eq:v-score-A}.
			\State Compute $\vw^{(j)}=\left(w^{(j)}_1, \dots,w^{(j)}_N\right)$ via \eqref{eq:w-wight}.
			\State Sample $i_j\in[N]$ according to the weights $\vw^{(j)}$ and compute $\vx_j^{(0)}$ via \eqref{eq:xj0}.
			\EndFor
		\end{algorithmic}
	\end{algorithm}
	
	We note that the scores $v_i^{(j)}$ defined in \eqref{eq:v-score-A} rely on the optimal objectives $f_i^*$, which may be computationally intensive to calculate in certain scenarios. Therefore, we propose a variant of Algorithm~\ref{alg:init-A} by adjusting the scores $v_i^{(j)}$. Specifically, when $j-1$ parameters $\vx_1^{(0)},\vx_2^{(0)},\ldots,\vx_{j-1}^{(0)}$ are selected, the score is set as the minimum squared norm of the gradient:
	\begin{equation}
		\label{eq:v-score-D}
		v_i^{(j)}=\min_{1\leq j' \leq j-1} \left\| \nabla f_i(\vx_{j'}^{(0)}) \right\|^2.
	\end{equation}
	This variant involves replacing the scores in Step 3 of Algorithm~\ref{alg:init-A} with \eqref{eq:v-score-D}, which is further elaborated in Appendix \ref{sec:alg-detail}. 
	
	In the context of classic $k$-means clustering where $f_i(\vx)=\frac{1}{2}\|\vx-\vy_i\|^2$ for the $i$-th data point $\vy_i$, the score $v_i^{(j)}$ in both \eqref{eq:v-score-A} and \eqref{eq:v-score-D} reduces to
	$$
	\min_{1\leq j'\leq j-1}\|\vx_{j'}^{(0)}-\vy_i\|^2,
	$$
	up to a constant scalar. This initialization algorithm, whether utilizing scores from \eqref{eq:v-score-A} or \eqref{eq:v-score-D}, aligns with the approach of the classic \texttt{$k$-means++} algorithm.

	\subsection{Generalized Lloyd's algorithm}
	Lloyd's algorithm is employed to minimize the loss in $k$-means clustering by alternately updating the clusters and their centroids~\cites{lloyd1982least,mackay2003example}. This centroid update process can be regarded as a form of gradient descent applied to group functions, defined by the average distance between data points within a cluster and its centroid~\cite{bottou1994convergence}.  
	For our problem \eqref{eq:sum-of-min-opt}, we introduce a novel gradient descent algorithm that utilizes dynamic group functions. Our algorithm is structured into two main phases: reclassification and group gradient descent.
	
	\paragraph{Reclassification.} The goal is for $C_j^{(t)}$ to encompass all $i\in[N]$ where $f_i$ is active at $\vx_j^{(t)}$, allowing us to use the sub-functions $f_i$ within $C_j^{(t)}$ to update $\vx_j^{(t)}$. This process leads to the reclassification step as follows:
	\begin{equation}
		\label{eq:C-partition-update}
		C_j^{(t)} = \Big\{i\in[N] : f_i(\vx_j^{(t)})\leq f_i(\vx_{j'}^{(t)}),
		\forall~ j'\in[k]\Big\} \Big\backslash\Big(\bigcup_{l< j}C_l^{(t)}\Big),\quad j=1,2,\dots,k.
	\end{equation}
	Given that reclassification may incur non-negligible costs in practice, a reclassification frequency $r$ can be established, performing the update in \eqref{eq:C-partition-update} every $r$ iterations while keeping $C_j^{(t)}=C_j^{(t-1)}$ constant during other iterations. 
	
	\paragraph{Group gradient descent.} With $C_j^{(t)}$ indicating the active $f_i$ at $\vx_j^{(t)}$, 
	we can define the group objective function:
	\begin{equation}
		\label{eq:F-j-t}
		F_j^{(t)}(\vz)=
		\begin{cases}
			\frac{1}{|C_j^{(t)}|}\sum_{i \in C_j^{(t)}} f_i(\vz), & C_j^{(t)}\not=\emptyset,\\
			0, & C_j^{(t)}=\emptyset,
		\end{cases}
	\end{equation}
	In each iteration, gradient descent is performed on $\vx_j^\tto$ individually as:
	\begin{equation}
		\label{eq:group_gd}
		\vx_j^{(t+1)}=\vx_j^{(t)}-\gamma \nabla F_j^{(t)}(\vx_j^{(t)}).
	\end{equation}
	Here, $\gamma>0$ is the chosen step size.
	Alternatively, one might opt for different iterative updates or directly compute:
	\begin{equation*}
		\vx_j^{(t+1)} = \argmin_\vx \sum_{i \in C_j^{(t)}} f_i(\vx),
	\end{equation*}
	especially if the minimizer of $\sum_{i \in C_j^{(t)}} f_i(\vx)$ admits a closed form or can be computed efficiently. The pseudo-code consisting of the above two steps is presented in Algorithm \ref{alg:Lloyd}.

	\begin{algorithm}[htb!]
		\caption{Generalized Lloyd's Algorithm}
		\label{alg:Lloyd}
		\begin{algorithmic}[1]
			\State Generate the initialization $\vx_1^{(0)},\vx_2^{(0)},\dots,\vx_k^{(0)}$ and set $r,\gamma$.
			\For{$t=0,1,2,\ldots,T$}
			\If{$t\equiv 0\, (\textup{mod }r)$}
			\State Compute the partition $\{C_j^{(t)}\}_{j=1}^k$ via \eqref{eq:C-partition-update}.
			\Else
			\State{$C_j^{(t)}=C_j^{(t-1)},\quad 1\leq j\leq k.$}
			\EndIf
			\State Compute $\vx_j^{(t+1)}$ via \eqref{eq:group_gd}.
			\EndFor
		\end{algorithmic}
	\end{algorithm}

	\textbf{Momentum Lloyd's Algorithm.} We enhance Algorithm \ref{alg:init-A} by incorporating a momentum term. The momentum for $\vx_j^\tto$ is represented as $\vm_j^\tto$, with $0<\beta<1$ and $\gamma>0$ serving as the step sizes for the momentum-based updates. We use the gradient of the group function $F_j^\tto$ to update the momentum $\vm_j^\tto$. The momentum algorithm admits the following form:
	\begin{align}
		\label{eq:momentum-x}
		&\vx_j^{(t+1)}=\vx_j^\tto-\gamma \vm_j^\tto,\\
		\label{eq:momentum-m}
		&\vm_j^{(t+1)}=\beta \vm_j^\tto+\nabla F_j^{(t+1)}(\vx_j^{(t+1)}).
	\end{align}
	A critical aspect of the momentum algorithm involves updating the classes $C_j^\tto$ between \eqref{eq:momentum-x} and \eqref{eq:momentum-m}. Rather than reclassifying based on $f_i$ evaluated at $\vx_j^{(t+1)}$, reclassification leverages an acceleration variable:
	\begin{equation}
		\label{eq:u-acceleration-var}
		\vu_j^{(t+1)}=\frac{1}{1-\beta}(\vx_j^{(t+1)}- \beta \vx_j^\tto).
	\end{equation}
	The index $i$ will be classified to $C_j^{(t+1)}$ where $f_i(\vu_j^{(t+1)})$ attains the minimal value.
	Furthermore, to mitigate abrupt shifts in each class $C_j$, we implement a \textit{controlled} reclassification scheme that limits the extent of change in each class:
	\begin{equation}
		\label{eq:control-C}
		\frac{1}{\alpha}|C_j^\tto|\leq |C_j^{(t+1)}|\leq \alpha |C_j^\tto|,
	\end{equation}
	where $\alpha>1$ serves as a constraint factor. Details of the momentum algorithm are provided in Appendix \ref{sec:alg-detail}. We display the pseudo-code in Algorithm \ref{alg:Lloyd-momentum}.
	
	\begin{algorithm}[t!]
		\caption{Momentum Lloyd's Algorithm}\label{alg:Lloyd-momentum}
		\begin{algorithmic}[1]
			\State Generate the initialization $\vx_1^{(0)},\vx_2^{(0)},\dots,\vx_k^{(0)}$. Set $\vm_1^{(0)},\vm_2^{(0)},\ldots,\vm_k^{(0)}$ to be $\vzero$. Set $r,\alpha,\beta,\gamma$.
			\For{$t=0,1,2,\ldots,T$}
			\State{Update $\vx_j^\tto$ using \eqref{eq:momentum-x}.}
			\If{$t\equiv 0\, (\textup{mod }r)$}
			\State Compute $\vu_j^{(t+1)}$ via \eqref{eq:u-acceleration-var}.
			\State Update $C_j^{(t+1)}$ with $\vu_j^{(t+1)}$ in control, such that \eqref{eq:control-C} holds.
			\Else
			\State{$C_j^{(t+1)}=C_j^{(t)},\quad 1\leq j\leq k.$}
			\EndIf
			\State Update the momentum $\vm_j^\tto$ via \eqref{eq:momentum-m}.
			\EndFor
		\end{algorithmic}
	\end{algorithm} 
	
	\section{Theoretical Analysis}
	\label{sec:theory}
	In this section, we prove the efficiency of the initialization algorithm and establish the convergence rate of Lloyd's algorithm. For the initialization Algorithm \ref{alg:init-A}, we show that the ratio between the optimality gap of $\{\vx_1^{(0)},\vx_2^{(0)},\dots,\vx_k^{(0)}\}$ and the smallest possible optimality gap is $\calO(\frac{L^2}{\mu^2}\ln k)$. Additionally, by presenting an example where this ratio is $\Omega(\frac{L^2}{\mu^2}\ln k)$, we illustrate the bound's tightness. For Lloyd's Algorithms \ref{alg:Lloyd} and \ref{alg:Lloyd-momentum}, we establish a gradient decay rate of $\calO(\frac{1}{T})$, underscoring the efficiency and convergence properties of these algorithms.

	\subsection{Error bound of the initialization algorithm}
	We define the set of initial points selected by the randomized initialization Algorithm \ref{alg:init-A},
	$$
	\calM_{\textup{init}}=\{\vx_1^{(0)},\vx_2^{(0)},\dots,\vx_k^{(0)}\}=\{ \vx_{i_1}^*,\vx_{i_2}^*,\ldots, \vx_{i_k}^* \},
	$$
	as the starting configuration for our optimization process. For simplicity, we use $F(\calM_{\textup{init}})=F(\vx_{i_1}^*,\vx_{i_2}^*,\ldots, \vx_{i_k}^*)$ to represent the function value at these initial points. Let $F^*$ be the global minimal value of $F$, and let $f^*=\frac{1}{N}\sum_{i=1}^Nf_i^*$ denote the average of the optimal values of sub-functions. The effectiveness of Algorithm~\ref{alg:init-A} is evaluated by the ratio between $\bE F(\calM_{\textup{init}}) - f^*$ and $F^* - f^*$,
	which is the expected ratio between the averaged optimality gap at $\calM_{\textup{init}}$ and the minimal possible averaged optimality gap. The following theorem provides a specific bound.
	
	\begin{theorem}
		\label{thm:init-upper-bound}
		Suppose that Assumptions \ref{asp:L-smooth} and \ref{asp:mu-strong-convex} hold. Assume that the solution set $S^*$ is $k$-separate. Let $\calM_{\textup{init}}$ be a random initialization set generated by Algorithm \ref{alg:init-A}. We have
		\begin{equation*}
			\bE F(\calM_{\textup{init}}) - f^*
			\leq
			4(2+\ln k) \left( \frac{L^2}{\mu^2}+\frac{L}{\mu} \right)\left(F^* - f^*\right).
		\end{equation*}
	\end{theorem}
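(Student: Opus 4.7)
The plan is to adapt the argument of Arthur and Vassilvitskii for \texttt{$k$-means++}, translating between the $\ell_2$-geometry of the classical proof and the functional gaps used in Algorithm~\ref{alg:init-A}. The foundational tool is the quadratic sandwich
$$\frac{\mu}{2}\|\vx-\vx_i^*\|^2 \leq f_i(\vx) - f_i^* \leq \frac{L}{2}\|\vx-\vx_i^*\|^2,\qquad \forall\ i\in[N],\ \vx\in\bR^d,$$
which follows directly from Assumptions~\ref{asp:L-smooth} and~\ref{asp:mu-strong-convex}. I would fix an optimal partition $(A_1^*,\ldots,A_k^*)$ attaining $F^*$, let $\vq_j^*\in\argmin_\vx \sum_{i\in A_j^*} f_i(\vx)$ be the cluster-optimal parameter for $A_j^*$, and define the cluster-wise optimality gap $\mathrm{OPT}(A):=\sum_{i\in A}(f_i(\vq_A^*) - f_i^*)$, so that $N(F^*-f^*)=\sum_j \mathrm{OPT}(A_j^*)$. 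I write $\phi(A,\calM) = \sum_{i\in A}\bigl(\min_{\vx\in\calM} f_i(\vx) - f_i^*\bigr)$ for the cost contribution of a cluster $A$ against a candidate center set $\calM$.

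I would then prove two technical lemmas mirroring Lemmas~3.1 and~3.2 of Arthur--Vassilvitskii. The first handles the uniformly sampled initial center: if $i_1$ is drawn uniformly from a cluster $A$, then $\bE_{i_1}[\phi(A,\{\vx_{i_1}^*\})] \leq \frac{2L}{\mu}\,\mathrm{OPT}(A)$. To establish this I would bound $f_i(\vx_{i_1}^*) - f_i^* \leq \tfrac{L}{2}\|\vx_{i_1}^*-\vx_i^*\|^2$ via the upper sandwich, invoke the mean-centered identity $\tfrac{1}{|A|^2}\sum_{i,i'\in A}\|\vx_i^*-\vx_{i'}^*\|^2 = \tfrac{2}{|A|}\sum_{i\in A}\|\vx_i^* - \overline{\vx}_A\|^2$ where $\overline{\vx}_A$ is the mean of $\{\vx_i^*\}_{i\in A}$, and then lower-bound $\mathrm{OPT}(A)\geq \tfrac{\mu}{2}\sum_{i\in A}\|\vq_A^*-\vx_i^*\|^2 \geq \tfrac{\mu}{2}\sum_{i\in A}\|\overline{\vx}_A - \vx_i^*\|^2$ from strong convexity together with the fact that $\overline{\vx}_A$ minimizes $\sum_i\|\cdot-\vx_i^*\|^2$.

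The second lemma controls the subsequent weighted samples: given a current center set $\calM$ and an uncovered cluster $A$ (meaning $A\cap\{i_1,\ldots,i_{j-1}\}=\emptyset$), if $i_j$ is drawn from $[N]$ with weights $\vw^{(j)}$ from~\eqref{eq:w-wight}, then $\bE\bigl[\phi(A,\calM\cup\{\vx_{i_j}^*\})\,\big|\,i_j\in A\bigr] \leq C\,\tfrac{L^2}{\mu^2}\,\mathrm{OPT}(A)$ for an absolute constant $C$. Following Arthur--Vassilvitskii, for $i,i'\in A$ I would use the sandwich together with a triangle inequality to bound $f_i(\vx_{i'}^*)-f_i^* \leq L\|\vx_{i'}^*-\vq_A^*\|^2 + L\|\vx_i^*-\vq_A^*\|^2$, and then manipulate the conditional expectation via Cauchy--Schwarz and the $\min(\cdot,\cdot)$ inside $\phi$, just as in the classical argument. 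With the two lemmas in hand I would carry out the inductive argument of Arthur--Vassilvitskii's Lemma~3.3 on the number $u$ of uncovered clusters after $t$ samples---covered clusters contribute at most their current cost, while uncovered ones shrink in expectation as in the lemma---yielding a harmonic factor $1+H_k\leq 2+\ln k$ when $t=k$. Combining the constants from Lemmas~A and~B then produces the claimed bound $4(2+\ln k)\bigl(\tfrac{L^2}{\mu^2}+\tfrac{L}{\mu}\bigr)(F^* - f^*)$, and one may verify that in the $k$-means case $L=\mu=1$ this collapses to the original $8(2+\ln k)$ of~\cite{arthur2007k}.

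The main obstacle will be the proof of the second lemma. The classical $k$-means++ analysis hinges on the triangle inequality $D(u)\leq D(i')+\|u-i'\|$ combined with $D^2$-weighted sampling, and the matching between the weights and the geometric bound is exact. In our setting one must invoke the sandwich twice---once to convert a functional triangle inequality on the $\vx_i^*$'s into a statement about distances, and a second time to match the probability weights $v_i^{(j)}$ against the distance-based bound---and each such translation costs a factor of $L/\mu$. Careful constant-tracking through these two conversions is what produces the $L^2/\mu^2$ in the final bound and requires the most delicate bookkeeping.
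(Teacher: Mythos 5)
Your proposal follows essentially the same route as the paper: the paper's Appendix C defines $\calA(\calI,\calM)$ (your $\phi$) and $\Delta_\calI$, proves the uniform-sample lemma (Lemma C.3 combined with Lemma C.6, giving exactly your $\frac{2L}{\mu}$ factor), proves the weighted-sample lemma (Lemma C.5 with Lemma C.6, giving $4(\frac{L^2}{\mu^2}+\frac{L}{\mu})$), and then runs the Arthur--Vassilvitskii-style induction on the number of uncovered clusters (Lemma C.9) to obtain the $1+H_{k-1}\leq 2+\ln k$ factor. Your outline is correct and your technical steps (the sandwich bound, the mean-centered identity, and the triangle inequality to pass between clusters) are precisely the ingredients the paper uses.
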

	
	Theorem~\ref{thm:init-upper-bound} indicates that the relative optimality gap at the initialization set is constrained by a factor of $\calO( \frac{L^2}{\mu^2}\ln k)$ times the minimal optimality gap. The proof of Theorem~\ref{thm:init-upper-bound} is detailed in Appendix~\ref{sec:init-error-bound}. In the classic $k$-means problem, where $L=\mu$, this result reduces to Theorem 1.1 in \cite{arthur2007k}. Moreover, the upper bound $\calO( \frac{L^2}{\mu^2}\ln k)$ is proven to be tight via a lower bound established in the following theorem.
	
	\begin{theorem}
		\label{thm:init-lower-bound}
		Given a fixed cluster number $k>0$, there exists an integer $N>0$. We can construct $N$ sub-functions $\{f_i\}_{i=1}^N$ satisfying Assumptions \ref{asp:L-smooth}--\ref{asp:mu-strong-convex} and guaranteeing the solution set $S^*$ to be $k$-separate. 
		When applying Algorithm \ref{alg:init-A} over the instances $\{f_i\}_{i=1}^N$, we have
		\begin{equation}
			\label{eq:A-init-lb}
			\bE F(\calM_{\textup{init}}) - f^* \geq \frac{1}{2}\frac{L^2}{\mu^2}\ln k \left(F^* - f^*\right).
		\end{equation}
	\end{theorem}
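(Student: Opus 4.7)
The statement is a matching lower bound for Theorem~\ref{thm:init-upper-bound}, so the proof must be constructive: I need to exhibit sub-functions $\{f_i\}_{i=1}^N$ satisfying Assumptions~\ref{asp:L-smooth}--\ref{asp:mu-strong-convex} with $k$-separate $S^*$ on which Algorithm~\ref{alg:init-A} demonstrably loses the full factor $\frac{1}{2}(L/\mu)^2 \ln k$. My plan is to combine the classical Arthur--Vassilvitskii tightness instance for \texttt{$k$-means++} (which already witnesses the $\ln k$ factor) with anisotropic quadratic sub-functions whose curvature ratio introduces the extra condition-number factor.

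For the $\ln k$ piece, I would replicate the standard $k$-means++ lower-bound construction: place $k$ well-separated cluster centers $\vz_1,\ldots,\vz_k$ and many sub-function minimizers $\vx_i^*$ tightly packed around each. A coupon-collector-style argument over the sequential sampling then yields that the expected number of clusters represented in $\calM_{\textup{init}}$ is at most $k-\Omega(\ln k)$, so that $\bE F(\calM_{\textup{init}})/F^*\gtrsim \ln k$. To amplify this by $L^2/\mu^2$, I would upgrade each isotropic squared distance to an anisotropic quadratic
\[
f_i(\vx)=\tfrac12(\vx-\vx_i^*)^\top A_i(\vx-\vx_i^*),\qquad \mu I\preceq A_i\preceq L I,
\]
with the $L$-eigenvectors of $A_i$ aligned with the inter-cluster displacement directions and the $\mu$-eigenvectors along the intra-cluster ``soft'' directions. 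This asymmetry inflates the sampling scores $v_i^{(j)}=f_i(\vx_{j'}^{(0)})-f_i^*$ to order $L\|\vx_{j'}^{(0)}-\vx_i^*\|^2$, biasing the algorithm by an extra factor of $L/\mu$ toward resampling from already-covered clusters; meanwhile, the unavoidable residual cost of an uncovered cluster is only of order $\mu\|\cdot\|^2$ along the soft direction, picking up a second factor of $L/\mu$ in the cost ratio.

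The remaining bookkeeping is straightforward: $\mu I\preceq A_i\preceq L I$ gives both curvature assumptions, the $k$ distinct $\vx_i^*$ ensure $S^*$ is $k$-separate, and the closed-form quadratic expectations make tightening the constants to $\tfrac12$ a direct exercise. The main obstacle is realizing the amplification step rigorously: there are $\binom{k}{2}$ distinct inter-cluster directions, so a low-dimensional rank-one Hessian cannot do the job, and I expect to need an embedding with dimension growing in $k$ (for instance $\vz_j\propto\ve_j$ on coordinate axes of $\bR^k$) so that each $A_i$ can orient its $L$-eigenspace with the subspace of directions pointing toward the other clusters. Propagating the $L/\mu$ gain through every step of the sequential induction, without leaking constant factors to the correlated histories of previous draws, is the technical heart of the argument.
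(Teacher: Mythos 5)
The high-level plan is sound (constructive example, condition-number amplification, dimension growing in $k$), but the specific mechanism you propose for gaining the factor $L^2/\mu^2$ does not work, and the paper's construction uses a crucially different ingredient that you are missing.

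Your proposal assigns all sub-functions in a cluster the \emph{same} anisotropy: $L$-eigendirections along the inter-cluster displacements and $\mu$-eigendirections along the intra-cluster ``soft'' directions. But trace the scores through: an uncovered cluster then has sampling weight $\sim L m^2$ (inter-cluster distance, hard curvature), while a covered cluster's residual functions have weight $\sim \mu r^2$ (intra-cluster distance, soft curvature). With well-separated centers this makes the algorithm \emph{more} likely to go to uncovered clusters, not less, which contradicts your claim of a ``bias toward resampling from already-covered clusters.'' Meanwhile the achievable per-cluster cost and the algorithm's per-cluster cost are both $\sim \mu r^2 n$, so no $L/\mu$ factor survives. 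Flipping the anisotropy doesn't help either; a single uniform orientation cannot produce a quadratic condition-number penalty because the algorithm's score normalization and the cost it pays are governed by the same curvature.

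The paper's key device is to put \emph{two types} of sub-functions in each cluster. In cluster $l$ (of $k$ simplex vertices embedded in the first $k$ coordinates of $\bR^{2k}$) there are $n$ ``light'' functions with curvature $\mu$ in all shared inter-cluster directions and $L$ only along a \emph{private} coordinate $k+l$, plus a single ``heavy'' function with curvature $L$ in the shared directions and $\mu$ along the private coordinate; the two types' minimizers sit at $\pm\ve_l$ in the private coordinate. When sampling an uncovered cluster, the heavy function's score is inflated by $L/\mu$ relative to each light one, so the (rare) event of covering the cluster via its heavy function happens with probability roughly $L/(n\mu)$; when it does, the $n$ remaining light functions carry residual cost $2nL$ because the private coordinate is now probed with the $L$-curvature. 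These two independent $L/\mu$ factors multiply to give the $L^2/\mu^2$ per cluster against the optimal residual $\approx 2\mu$, and a four-parameter induction over the Markov state (light-covered, heavy-covered, uncovered, samples remaining) carries the harmonic-sum $\ln k$ through. Note that the paper does not align $L$-eigenspaces with the $\binom{k}{2}$ pairwise directions; by using a single private soft coordinate per cluster, it avoids that complication entirely, so the worry that closes your proposal is a non-issue once the light/heavy split is in place. To repair your sketch you would need to introduce this asymmetric two-population structure within each cluster — without it, there is no mechanism for the sampling distribution and the incurred cost to be governed by \emph{opposite} curvatures, which is what generates the squared condition number.
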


	The proof of Theorem~\ref{thm:init-lower-bound} is presented in detail in Appendix \ref{sec:init-error-bound}.
	In both Theorem~\ref{thm:init-upper-bound} and Theorem~\ref{thm:init-lower-bound}, the performance of Algorithm~\ref{alg:init-A} is analyzed with the assumption that $\vv^{(j)}$ and $f_i^*$ in \eqref{eq:v-score-A} can be computed exactly. However, the accurate computation of $f_i^*$ may be impractical due to computational costs. Therefore, we explore the error bounds when the score $\vv^{(j)}$ approximates \eqref{eq:v-score-A} with some degree of error.
	We investigate two types of scoring errors. 
	\begin{itemize}
		\item \textbf{Additive error.} There exists $\epsilon>0$, we have access to an estimated $\tilde f_i^*$ satisfying
		\begin{equation}
			\label{eq:add-error}
			f^*_i-\epsilon\leq\tilde f^*_i\leq f^*_i+\epsilon.
		\end{equation}
		Accordingly, we define:
		\begin{equation}
			\label{eq:add-error-score}
			\tilde v_i^{(j)}=\min_{1\leq j'\leq j-1}\left(\max\left(f_i(\vx^{(0)}_{j'})-\tilde f^*_i ,0\right)\right)=\max\left(\min_{1\leq j'\leq j-1}\left(f_i(\vx^{(0)}_{j'})-\tilde f^*_i \right),0\right).
		\end{equation}
		\item \textbf{Scaling error.} There exists a deterministic oracle $O_v:[N]\times \bR^d\rightarrow \bR$, such that for any $\vx\in\bR^d$ and $i\in[N]$,
		\begin{equation}\label{eq:scaling-error}
			c_1 (f_i(\vx)-f_i^*) \leq O_v(i,\vx)\leq c_2 (f_i(\vx)-f_i^*).
		\end{equation}
		Set
		\begin{equation}
			\label{eq:mult-error}
			\tilde v_i^{(j)}=\min_{1\leq j'\leq j-1}O_v(i,\vx_{j'}^{(0)}).
		\end{equation}
	\end{itemize}
	
	We first analyze the performance of Algorithm~\ref{alg:init-A} using the score $\tilde v_i^{(j)}$ with additive error as in \eqref{eq:add-error-score}. We typically require the assumption that the solution set $S^*$ is $(k,\sqrt\frac{2\epsilon}{\mu})$-separate, which guarantees that
	$$
	\sum_{i=1}^N \min_{j\in[l]}\max\left((f_i(\vz_j)-\tilde f_i^*),0\right)>0,
	$$
	for any $l<k$ and $\vz_1,\vz_2,\ldots,\vz_l\in\bR^d$.
	Hence in the initialization Algorithm \ref{alg:init-A} with score \eqref{eq:add-error-score}, there is at least one $\tilde v_i^{(j)}>0$ in each round. 
	We have the following generalized version of Theorem~\ref{thm:init-upper-bound} with additive error.
	
	\begin{theorem} 
		\label{thm:add-noisy-init-bound}
		Under Assumptions \ref{asp:L-smooth} and \ref{asp:mu-strong-convex}, suppose that we have $\{\Tilde{f}_i^*\}_{i=1}^N$ satisfying \eqref{eq:add-error} for some noise factor $\epsilon>0$, and that the solution set $S^*$ is $\big(k,\sqrt\frac{2\epsilon}{\mu}\big)$-separate.
		Then for the initialization Algorithm~\ref{alg:init-A} with the scores in \eqref{eq:v-score-A} replaced by the noisy scores in
		\eqref{eq:add-error-score}, we have
		\begin{equation}
			\label{eq:noisy-init-bound}
			\bE F(\calM_{\textup{init}}) - f^* \leq
			4(2+\ln k) \left( \frac{L^2}{\mu^2}+\frac{L}{\mu} \right)(F^*-f^*) + \epsilon\cdot \left(1+(2+\ln k) \left(1+\frac{4L}{\mu}\right)\right).
		\end{equation}
	\end{theorem}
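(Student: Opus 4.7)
The plan is to replay the proof of Theorem~\ref{thm:init-upper-bound}, step by step, while carrying through the $\epsilon$-perturbation introduced by replacing each $v_i^{(j)}$ by $\tilde v_i^{(j)}$. First I would verify that the sampling distribution $\vw^{(j)}$ is well-defined at every round: by Assumption~\ref{asp:mu-strong-convex}, $f_i(\vx)-f_i^*\geq \frac{\mu}{2}\|\vx-\vx_i^*\|^2$, so if some $\vx_i^*$ lies at distance strictly greater than $\sqrt{2\epsilon/\mu}$ from all previously selected centers, then $v_i^{(j)}>\epsilon$ and hence $\tilde v_i^{(j)}>0$. Because the $(k,\sqrt{2\epsilon/\mu})$-separation hypothesis supplies $k$ indices with pairwise separation exceeding $2\sqrt{2\epsilon/\mu}$ and fewer than $k$ centers have been chosen by the $j$-th round, a pigeonhole argument guarantees that at least one such index remains ``far'' and therefore $\sum_i\tilde v_i^{(j)}>0$.

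Next I would record the pointwise sandwich
\begin{equation*}
\max\bigl(v_i^{(j)}-\epsilon,0\bigr)\leq \tilde v_i^{(j)} \leq v_i^{(j)}+\epsilon,
\end{equation*}
which is immediate from \eqref{eq:add-error} and \eqref{eq:add-error-score}. This lets me compare the noisy weights $\tilde w_i^{(j)}$ to the clean weights $w_i^{(j)}$ used in the proof of Theorem~\ref{thm:init-upper-bound}. I would then rerun that proof's induction: at each round, the clean analysis controls the conditional increment in optimality gap by an expression of the form $\sum_i w_i^{(j)}\cdot\Phi_i^{(j)}$, where $\Phi_i^{(j)}$ is obtained from smoothness/strong convexity and satisfies $\Phi_i^{(j)}\leq (1+\tfrac{4L}{\mu})\,v_i^{(j)}$ (this is the source of the $\frac{L^2}{\mu^2}+\frac{L}{\mu}$ factor already visible in Theorem~\ref{thm:init-upper-bound}). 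Substituting $\tilde v_i^{(j)}\leq v_i^{(j)}+\epsilon$ into the numerator and $\tilde v_i^{(j)}\geq \max(v_i^{(j)}-\epsilon,0)$ into the denominator produces an extra additive term of order $\epsilon(1+\tfrac{4L}{\mu})$ per round. Telescoping over rounds with the same $(2+\ln k)$ coefficient that emerges from the clean potential argument, and adding a single harmless $\epsilon$ for the first-round uniform sample, yields the stated bound \eqref{eq:noisy-init-bound}.

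The main obstacle will be bookkeeping the noise propagation without picking up a spurious factor of $N$. Concretely, the Arthur--Vassilvitskii-style step splits indices into ``covered'' and ``uncovered'' clusters; for each covered cluster one bounds a conditional expectation by a ratio whose numerator is a sum over that cluster of terms like $w_i^{(j)}\Phi_i^{(j)}$ and whose denominator is $\sum_i v_i^{(j)}$. Replacing $v_i^{(j)}$ with $\tilde v_i^{(j)}$ perturbs both sides by at most $N\epsilon$, so a naive bound scales with $N$. Getting only the $\epsilon(1+(2+\ln k)(1+4L/\mu))$ correction requires using the separation hypothesis a second time: because $\sum_i \tilde v_i^{(j)}$ can be lower-bounded in terms of the current suboptimality (any uncovered well-separated index already contributes $>0$), the normalization absorbs the $N\epsilon$ deviation into a per-round $\mathcal{O}(\epsilon(1+L/\mu))$ slack. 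Verifying the precise constants, in particular the ``$4$'' in $4L/\mu$ and the leading ``$1$'' accounting for the uniform first sample, is where the detailed estimation will be concentrated.
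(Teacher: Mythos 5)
Your high-level plan---replay the clean proof while carrying an $\epsilon$-perturbation via the sandwich $\max(v_i^{(j)}-\epsilon,0)\leq\tilde v_i^{(j)}\leq v_i^{(j)}+\epsilon$---points in the right direction, and your well-posedness argument for $\sum_i\tilde v_i^{(j)}>0$ under the $(k,\sqrt{2\epsilon/\mu})$-separation hypothesis matches the paper. But the technical route you sketch for controlling the weight perturbation is not the one that works, and I don't think it can be made to work as stated.

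The worry about a ``spurious factor of $N$'' is a red herring. The paper bounds the \emph{total} quantity $\bE\,\calA([N],\calM_{\textup{init}})=\sum_i\min_{\vz\in\calM_{\textup{init}}}(f_i(\vz)-f_i^*)$, and the additive-noise contribution there \emph{is} of order $N\epsilon$; this is correct, because $\bE F(\calM_{\textup{init}})-f^*=\tfrac{1}{N}\bE\,\calA([N],\calM_{\textup{init}})$, so the final division by $N$ turns $N\epsilon$ into $\epsilon$. No second use of separation, and no quantitative lower bound on $\sum_i\tilde v_i^{(j)}$, is invoked or needed. Indeed such a lower bound would be hard to get: the sizes of the $\tilde v_i^{(j)}$ depend on the centers chosen so far, and nothing stops them from being uniformly small if the algorithm got lucky early.

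The actual key step you are missing is a noisy analogue of Lemma~\ref{lem:diff-Di-Di'}. Rather than comparing the noisy weight $\tilde v_i^{(j)}/\sum_{i'}\tilde v_{i'}^{(j)}$ to the clean weight (which runs into exactly the ill-conditioning you flag when $\sum_{i'}v_{i'}^{(j)}$ is small relative to $N\epsilon$), the paper's Lemma~\ref{lem:EA-noisy-bound-A-opt} rewrites $\tilde\calA(\{i\},\calM)$ \emph{in terms of another noisy quantity} $\tilde\calA(\{i'\},\calM)$ plus controllable error:
\begin{equation*}
\tilde\calA(\{i\},\calM)\leq\Big(1+\tfrac{2L}{\mu}\Big)\epsilon+L\|\vx_i^*-\vx_{i'}^*\|^2+\tfrac{2L}{\mu}\,\tilde\calA(\{i'\},\calM).
\end{equation*}
Averaging this over $i'\in\calI$ and substituting into the weighted sum, the $\tfrac{2L}{\mu}\tilde\calA(\{i'\},\calM)$ term reproduces the denominator $\tilde\calA(\calI,\calM)$ exactly and cancels, leaving an additive $|\calI|(1+\tfrac{4L}{\mu})\epsilon$ plus the usual $\Delta_\calI$ term. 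No lower bound on the denominator is required beyond strict positivity. This is the structural difference: keep numerator and denominator in the \emph{same} noisy scale and let them cancel, instead of trying to transfer both to the clean scale and bound the resulting ratio.

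The rest of the paper's proof (Lemma~\ref{lem:main-noisy} and the final averaging over a uniformly random $i_1$) then proceeds exactly as you describe, accumulating $|\calI_j^c|(1+\tfrac{4L}{\mu})\epsilon$ per harmonic layer, which sums to $N(2+\ln k)(1+\tfrac{4L}{\mu})\epsilon$ plus the $N\epsilon$ from the first round, and dividing by $N$ at the end.

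Separately, a small point: your pointwise bound $\Phi_i^{(j)}\leq(1+\tfrac{4L}{\mu})v_i^{(j)}$ is not how the paper's constants arise; the $\tfrac{L^2}{\mu^2}+\tfrac{L}{\mu}$ factor comes from combining Lemma~\ref{lem:EA-delta-bound} (yielding $\tfrac{L^2}{\mu}+L$ against $\Delta_\calI$) with Lemma~\ref{lem:delta-A-bound} ($\Delta_{A_l}\leq\tfrac{4}{\mu}\calA(A_l,\calM_{\textup{OPT}})$), and the $1+\tfrac{4L}{\mu}$ coefficient on $\epsilon$ is a separate accumulation inside Lemma~\ref{lem:EA-noisy-bound-A-opt}.
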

	The proof of Theorem~\ref{thm:add-noisy-init-bound} is deferred to Appendix \ref{sec:init-error-bound}. Next, we state a similar result for the scaling-error oracle as in \eqref{eq:mult-error}, whose proof is deferred to Appendix~\ref{sec:init-error-bound}.
	
	\begin{theorem} 
		\label{thm:multi-noisy-init-bound}
		Suppose that Assumptions \ref{asp:L-smooth}--\ref{asp:mu-strong-convex} hold and that the solution set $S^*$ is $k$-separate.
		Then for the initialization Algorithm \ref{alg:init-A} with the scores in \eqref{eq:v-score-A} replaced by the scores in
		\eqref{eq:mult-error},
		we have the following bound:
		\begin{equation*}
			\bE F(\calM_{\textup{init}}) - f^*  \leq 4\left(\frac{c_2}{c_1}\frac{L}{\mu}+\frac{c_2^2}{c_1^2}\frac{L^2}{\mu^2} 
			\right)(2+\ln k)(F^*-f^*).
		\end{equation*}
	\end{theorem}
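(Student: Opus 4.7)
The bound in Theorem~\ref{thm:multi-noisy-init-bound} is obtained from that of Theorem~\ref{thm:init-upper-bound} by the formal substitution $L \mapsto c_2 L$, $\mu \mapsto c_1 \mu$, and the plan is to justify this substitution by replaying the proof of Theorem~\ref{thm:init-upper-bound} with the oracle-based scores $\tilde v_i^{(j)}$ in place of the exact scores $v_i^{(j)}$. The key observation is that composing the scaling-error bound \eqref{eq:scaling-error} with the quadratic sandwich $\tfrac{\mu}{2}\|\vx-\vx_i^*\|^2 \leq f_i(\vx)-f_i^* \leq \tfrac{L}{2}\|\vx-\vx_i^*\|^2$ implied by Assumptions~\ref{asp:L-smooth}--\ref{asp:mu-strong-convex} gives
\begin{equation*}
\frac{c_1\mu}{2}\|\vx-\vx_i^*\|^2 \;\leq\; O_v(i,\vx) \;\leq\; \frac{c_2 L}{2}\|\vx-\vx_i^*\|^2.
\end{equation*}
Hence the surrogate scores $\tilde v_i^{(j)} = \min_{j' \leq j-1} O_v(i, \vx_{j'}^{(0)})$ enjoy the very same type of quadratic sandwich with respect to the squared distances $\|\vx_{j'}^{(0)}-\vx_i^*\|^2$ that drives the analysis of Theorem~\ref{thm:init-upper-bound}, but with an effective strong-convexity constant $c_1\mu$ and an effective smoothness constant $c_2 L$.

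First, I would locate in the proof of Theorem~\ref{thm:init-upper-bound} (Appendix~\ref{sec:init-error-bound}) the specific places where Assumptions~\ref{asp:L-smooth} and \ref{asp:mu-strong-convex} are invoked, and verify that they enter the argument only through the above quadratic sandwich, i.e.\ only via conversions between the optimality gap $f_i(\vx)-f_i^*$ and the squared distance $\|\vx-\vx_i^*\|^2$. Arthur--Vassilvitskii-style recursions are driven almost entirely by squared distances, with smoothness and strong convexity entering only to relate those distances to the score quantity actually used in sampling.

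Second, I would repeat the argument verbatim with $v_i^{(j)}$ replaced by $\tilde v_i^{(j)}$ throughout, together with the modified sandwich constants $c_1\mu$ and $c_2 L$. Every bound that carried a factor $\mu$ or $L$ transforms into one carrying $c_1\mu$ or $c_2 L$, respectively. The $k$-separateness assumption on $S^*$ is preserved (it does not depend on the scores), and the harmonic-sum telescoping responsible for the $(2+\ln k)$ factor only uses the fact that $\sum_i \tilde w_i^{(j)} = 1$ together with probabilistic bounds on covering a new optimal cluster — both of which are structurally preserved under the substitution. Collecting the new constants in the concluding inequality of Theorem~\ref{thm:init-upper-bound} yields $4(2+\ln k)\bigl(\tfrac{(c_2L)^2}{(c_1\mu)^2} + \tfrac{c_2L}{c_1\mu}\bigr)(F^*-f^*)$, which is precisely the claimed bound.

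The main obstacle is verifying the structural claim that Assumptions~\ref{asp:L-smooth}--\ref{asp:mu-strong-convex} enter the proof of Theorem~\ref{thm:init-upper-bound} \emph{only} through the quadratic sandwich relating $v_i^{(j)}$ to $\|\vx-\vx_i^*\|^2$, rather than through further invocations of the gradient-Lipschitz inequality or first-order convexity on the oracle itself. Should such an auxiliary use appear, a workaround is to pass to the proxy family $\bar f_i(\vx) := (c_2/c_1)(f_i(\vx) - f_i^*) + f_i^*$, which is $(c_2 L/c_1)$-smooth and $(c_2\mu/c_1)$-strongly convex and dominates $O_v(i,\cdot)$ from above; otherwise the argument is strictly an exercise in tracking how the effective condition number $c_2 L/(c_1\mu)$ propagates through the inductive step and the final harmonic bound.
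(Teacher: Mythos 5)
Your final constant is exactly right, and the intuition that the oracle scores inherit a modified quadratic sandwich $\tfrac{c_1\mu}{2}\|\vx-\vx_i^*\|^2 \le O_v(i,\vx)\le\tfrac{c_2L}{2}\|\vx-\vx_i^*\|^2$ is the correct starting observation. However, the plan of ``replay the proof of Theorem~\ref{thm:init-upper-bound} with $\mu\mapsto c_1\mu$, $L\mapsto c_2L$ throughout'' has a genuine gap. In the proof of Theorem~\ref{thm:init-upper-bound}, the sandwich is applied to \emph{two distinct objects}: (i) the score/potential used for sampling, and (ii) the objective gaps $f_i(\vz)-f_i^*$ that appear inside the potential $\calA(\calI,\calM)=\sum_i\min_{\vz}(f_i(\vz)-f_i^*)$, which is the quantity you are ultimately trying to bound and is also the quantity defining $F^*-f^*$. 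Under the scaling-error oracle only role~(i) acquires the modified constants $(c_1\mu,c_2L)$; role~(ii) still uses the true $(\mu,L)$. If you apply your substitution uniformly, you are implicitly replacing the exact potential $\calA$ by the surrogate $\tilde\calA(\calI,\calM)=\sum_i\min_{\vz\in\calM}O_v(i,\vz)$ on \emph{both} sides of the concluding inequality of Theorem~\ref{thm:init-upper-bound}. To get back to the stated quantities you must then use $\calA\le\tfrac{1}{c_1}\tilde\calA$ on the left and $\tilde\calA(\cdot,\calM_{\textup{OPT}})\le c_2\calA(\cdot,\calM_{\textup{OPT}})$ on the right, which introduces an extra factor $\tfrac{c_2}{c_1}$ relative to the claimed bound.

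The paper instead mirrors the additive-noise argument (Lemma~\ref{lem:EA-noisy-bound-A-opt}, Lemma~\ref{lem:main-noisy}, Theorem~\ref{thm:noisy-init-bound}): track $\tilde\calA$ for the sampled set, but deliberately keep the inhomogeneous ``opt'' term in the recursion as $\calA(\cdot,\calM_{\textup{OPT}})$ rather than $\tilde\calA(\cdot,\calM_{\textup{OPT}})$. Concretely, the scaling-error analog of Lemma~\ref{lem:EA-noisy-bound-A-opt} reads
\begin{equation*}
\bE\,\tilde\calA(\calI,\calM\cup\{\vx_i^*\}) \;\le\; 4\Bigl(\tfrac{c_2^2 L^2}{c_1\mu^2}+\tfrac{c_2 L}{\mu}\Bigr)\,\min_{\vz}\sum_{i\in\calI}\bigl(f_i(\vz)-f_i(\vx_i^*)\bigr),
\end{equation*}
mixing $\tilde\calA$ on the left with the exact gap on the right, and the recursion of Lemma~\ref{lem:main-noisy} propagates this mixed form. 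Only at the very last step does one pass from $\tilde\calA$ to $\calA$, via a single division by $c_1$, and the arithmetic $\tfrac{1}{c_1}\bigl(\tfrac{c_2^2L^2}{c_1\mu^2}+\tfrac{c_2L}{\mu}\bigr)=\tfrac{c_2^2L^2}{c_1^2\mu^2}+\tfrac{c_2L}{c_1\mu}$ produces precisely the stated bound. That your formal substitution lands on the same number is a pleasant coincidence of this arithmetic, not a consequence of a uniform $L\mapsto c_2L$, $\mu\mapsto c_1\mu$ replacement. The ``proxy family'' $\bar f_i$ you suggest as a fallback does not resolve this, since it rescales the exact objective gap and so changes the very quantity $F^*-f^*$ you are comparing against.
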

	
	Recall that we introduce an alternative score in \eqref{eq:v-score-D}. This score can actually be viewed as a noisy version of \eqref{eq:v-score-A} with a scaling error.
	Under Assumptions \ref{asp:L-smooth} and \ref{asp:mu-strong-convex}, it holds that
	$$
	2\mu(f_i(\vx)-f_i^*)\leq\|\nabla f_i(\vx) \|^2\leq 2L(f_i(\vx)-f_i^*),
	$$
	for any $i\in[N]$ and $\vx\in\bR^d$, which satisfies \eqref{eq:scaling-error} with $c_1=2\mu$ and $c_2=2 L$. Therefore, we have a direct corollary of Theorem~\ref{thm:multi-noisy-init-bound}.
	
	\begin{corollary}
		Suppose that Assumptions \ref{asp:L-smooth} and \ref{asp:mu-strong-convex} hold and that the solution set $S^*$ is $k$-separate.
		For the initialization Algorithm \ref{alg:init-A} with the scores in \eqref{eq:v-score-A} replaced by the scores in
		\eqref{eq:v-score-D},
		we have
		\begin{equation*}
			\bE F(\calM_{\textup{init}}) - f^* \leq 4\left(\frac{L^2}{\mu^2}+\frac{L^4}{\mu^4} 
			\right)(2+\ln k)(F^*-f^*).
		\end{equation*}
	\end{corollary}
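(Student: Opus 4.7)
The plan is to treat this corollary as a direct application of Theorem \ref{thm:multi-noisy-init-bound} after casting the gradient-norm score \eqref{eq:v-score-D} into the scaling-oracle framework \eqref{eq:scaling-error}. The first step is to define the deterministic oracle $O_v(i,\vx) := \|\nabla f_i(\vx)\|^2$ and verify the two-sided bound
$$2\mu\bigl(f_i(\vx) - f_i^*\bigr) \leq \|\nabla f_i(\vx)\|^2 \leq 2L\bigl(f_i(\vx) - f_i^*\bigr)$$
for every $\vx \in \bR^d$ and $i \in [N]$. This identifies $c_1 = 2\mu$ and $c_2 = 2L$ in \eqref{eq:scaling-error} and shows that the noisy score in \eqref{eq:mult-error} coincides with the gradient score in \eqref{eq:v-score-D}.

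The upper bound is a standard consequence of $L$-smoothness, obtained by applying the descent inequality at $\vx$ with step $1/L$: since $f_i(\vx - \nabla f_i(\vx)/L) \leq f_i(\vx) - \|\nabla f_i(\vx)\|^2/(2L)$ and the left-hand side is at least $f_i^*$, the bound follows. The lower bound is the Polyak--Lojasiewicz inequality, which is obtained by minimizing the strong-convexity inequality $f_i(\vy) \geq f_i(\vx) + \nabla f_i(\vx)^\top(\vy - \vx) + \frac{\mu}{2}\|\vy - \vx\|^2$ over $\vy$ and comparing the resulting minimum $f_i(\vx) - \|\nabla f_i(\vx)\|^2/(2\mu)$ with $f_i^*$.

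Finally, plugging $(c_1,c_2) = (2\mu, 2L)$ into the coefficient appearing in the bound of Theorem \ref{thm:multi-noisy-init-bound}, the scalar simplifies as
$$\frac{c_2}{c_1}\cdot\frac{L}{\mu} + \frac{c_2^2}{c_1^2}\cdot\frac{L^2}{\mu^2} = \frac{L^2}{\mu^2} + \frac{L^4}{\mu^4},$$
and the stated estimate is immediate. There is no genuine obstacle here: the entire substantive content of the proof is the two-sided gradient-norm bound, which is classical convex analysis, and everything else is a mechanical specialization of the already-established Theorem \ref{thm:multi-noisy-init-bound}.
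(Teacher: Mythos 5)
Your proposal is correct and follows exactly the paper's own route: identify $O_v(i,\vx)=\|\nabla f_i(\vx)\|^2$ as a scaling-error oracle with $c_1=2\mu$ and $c_2=2L$ via the descent lemma and the Polyak--Lojasiewicz inequality, then substitute into Theorem \ref{thm:multi-noisy-init-bound}. The only addition you make is to spell out the standard derivations of the two-sided gradient-norm bound, which the paper states without proof.
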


	\subsection{Convergence rate of Lloyd's algorithm} 
	In this subsection, we state convergence results of Lloyd's Algorithm~\ref{alg:Lloyd} and momentum Lloyd's Algorithm~\ref{alg:Lloyd-momentum}, with all proofs being deferred to Appendix~\ref{sec:conv-Lloyd}. For Algorithm~\ref{alg:Lloyd}, the optimization process of $\vx_j^\tto$ follows a gradient descent scheme on a varying objective function $F_j^\tto$, which is the average of all active $f_i$'s determined by $C_j^{(t)}$ in \eqref{eq:C-partition-update}. We have the following gradient-descent-like convergence rate on the gradient norm $\|\nabla F_j^\tto (\vx_j^\tto)\|$.
	
	\begin{theorem}
		\label{thm:lloyd-gd-conv}
		Suppose that Assumption \ref{asp:L-smooth} is satisfied and we take the step size $\gamma=\frac{1}{L}$ in Algorithm \ref{alg:Lloyd}. Then 
		\begin{equation*}
			\frac{1}{T+1}\sum_{t=0}^T\sum_{j=1}^k  \frac{|C_j^{(t)}|}{N} \left\|\nabla F_j^{(t)}(\vx_j^{(t)}) \right\|^2 \leq \frac{2L}{T+1}\left( F(\vx_1^{(0)},\vx_2^{(0)},\ldots,\vx_k^{(0)})-F^\star\right).
		\end{equation*}
	\end{theorem}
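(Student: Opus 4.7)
The plan is to introduce the auxiliary potential
\[
H^{(t)} \;:=\; \sum_{j=1}^k \frac{|C_j^{(t)}|}{N}\, F_j^{(t)}\!\left(\vx_j^{(t)}\right)
\;=\; \frac{1}{N}\sum_{j=1}^k\sum_{i\in C_j^{(t)}} f_i\!\left(\vx_j^{(t)}\right),
\]
and prove it is a Lyapunov function for the iteration whose per-step decrease is exactly the left-hand side of the inequality. The key elementary fact is that for \emph{any} partition $\{C_j\}_{j=1}^k$ of $[N]$ and any points $\vz_1,\dots,\vz_k$,
\[
F(\vz_1,\dots,\vz_k) \;=\; \frac{1}{N}\sum_{i=1}^N \min_{j\in[k]} f_i(\vz_j) \;\leq\; \frac{1}{N}\sum_{j=1}^k\sum_{i\in C_j} f_i(\vz_j),
\]
with equality whenever the partition assigns each $i$ to an index achieving the minimum. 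Applied with $\vz_j=\vx_j^{(t)}$ and $C_j=C_j^{(t)}$ this gives $F(\vx_1^{(t)},\dots,\vx_k^{(t)})\leq H^{(t)}$, and at $t=0$, where the reclassification rule \eqref{eq:C-partition-update} is executed, we actually have $H^{(0)}=F(\vx_1^{(0)},\dots,\vx_k^{(0)})$. Similarly, $H^{(T+1)}\geq F^*$.

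Next I would establish the one-step descent $H^{(t+1)}\leq H^{(t)} - \frac{1}{2L}\sum_j \frac{|C_j^{(t)}|}{N}\|\nabla F_j^{(t)}(\vx_j^{(t)})\|^2$. Under Assumption~\ref{asp:L-smooth}, each $F_j^{(t)}$ is an average of $L$-smooth functions, hence itself $L$-smooth, so the standard descent lemma applied to the update \eqref{eq:group_gd} with $\gamma=1/L$ yields
\[
F_j^{(t)}\!\left(\vx_j^{(t+1)}\right) \;\leq\; F_j^{(t)}\!\left(\vx_j^{(t)}\right) - \frac{1}{2L}\left\|\nabla F_j^{(t)}\!\left(\vx_j^{(t)}\right)\right\|^2.
\]
Weighting by $|C_j^{(t)}|/N$ and summing over $j$ gives the desired decrease for the quantity $\widetilde H^{(t+1)}:=\sum_j\frac{|C_j^{(t)}|}{N}F_j^{(t)}(\vx_j^{(t+1)})$, which uses the \emph{old} partition $C_j^{(t)}$. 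The one delicate point is to pass from $\widetilde H^{(t+1)}$ to $H^{(t+1)}$, which uses $C_j^{(t+1)}$: if $t+1$ is not a reclassification step then $C_j^{(t+1)}=C_j^{(t)}$ and $\widetilde H^{(t+1)}=H^{(t+1)}$; if it is a reclassification step, then $C_j^{(t+1)}$ assigns each $i$ to a minimizing index, so by the elementary fact above
\[
H^{(t+1)} \;=\; F\!\left(\vx_1^{(t+1)},\dots,\vx_k^{(t+1)}\right) \;\leq\; \widetilde H^{(t+1)}.
\]
In either case $H^{(t+1)}\leq \widetilde H^{(t+1)}$, which closes the loop.

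Finally, I would telescope: summing the one-step descent from $t=0$ to $T$ and using $H^{(T+1)}\geq F^*$ and $H^{(0)}=F(\vx_1^{(0)},\dots,\vx_k^{(0)})$ gives
\[
\frac{1}{2L}\sum_{t=0}^T\sum_{j=1}^k\frac{|C_j^{(t)}|}{N}\left\|\nabla F_j^{(t)}\!\left(\vx_j^{(t)}\right)\right\|^2 \;\leq\; F\!\left(\vx_1^{(0)},\dots,\vx_k^{(0)}\right) - F^\star,
\]
and dividing by $T+1$ yields the stated bound. The main subtlety, and the only place where the structure of the algorithm (as opposed to ordinary gradient descent) is used, is the reclassification step: one must observe that switching to the minimum-assigning partition can only \emph{decrease} the potential $H$, so reclassification is ``free'' in the descent analysis regardless of the reclassification frequency $r$.
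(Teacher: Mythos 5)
Your proof is correct and follows the same basic strategy as the paper (descent lemma for each group objective, weight by $|C_j^{(t)}|/N$, telescope), but your introduction of the explicit potential $H^{(t)}=\sum_j\frac{|C_j^{(t)}|}{N}F_j^{(t)}(\vx_j^{(t)})$ is actually more careful than what the paper writes. The paper's intermediate step asserts
\begin{equation*}
\sum_{j=1}^k  \frac{|C_j^{(t)}|}{N} \left\|\nabla F_j^{(t)}(\vx_j^{(t)}) \right\|^2
\leq 2L\Bigl( F(\vx_1^{(t)},\ldots,\vx_k^{(t)})-F(\vx_1^{(t+1)},\ldots,\vx_k^{(t+1)})\Bigr),
\end{equation*}
and then telescopes $F(\vx^{(t)})$. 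But $\sum_j\frac{|C_j^{(t)}|}{N}F_j^{(t)}(\vx_j^{(t)})$ equals $F(\vx^{(t)})$ only on reclassification steps; on intermediate steps (when $r>1$ and $t\not\equiv 0\ (\mathrm{mod}\ r)$) one only has $\sum_j\frac{|C_j^{(t)}|}{N}F_j^{(t)}(\vx_j^{(t)})\geq F(\vx^{(t)})$, which is the wrong direction, so the displayed term-by-term inequality need not hold. Your formulation avoids this: the one-step drop is written for $H^{(t)}$ rather than for $F(\vx^{(t)})$, and the two observations that (i) $\widetilde H^{(t+1)}\leq H^{(t)}-\frac{1}{2L}\sum_j\frac{|C_j^{(t)}|}{N}\|\nabla F_j^{(t)}(\vx_j^{(t)})\|^2$ and (ii) reclassification only decreases the potential ($H^{(t+1)}\leq\widetilde H^{(t+1)}$) make the telescoping valid for any reclassification frequency $r$, with $H^{(0)}=F(\vx^{(0)})$ because $t=0$ is a reclassification step and $H^{(T+1)}\geq F(\vx^{(T+1)})\geq F^\star$. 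So the two proofs land on the same bound, but your Lyapunov bookkeeping is the version that actually closes the argument for general $r$.
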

	
	For momentum Lloyd's Algorithm~\ref{alg:Lloyd-momentum}, we have a similar convergence rate stated as follows.
	
	\begin{theorem}
		\label{thm:lloyd-momentum-conv}
		Suppose that Assumption~\ref{asp:L-smooth} holds and that $\alpha>1$. For Algorithm \ref{alg:Lloyd-momentum}, there exists a constant $ \Bar{\gamma}(\alpha,\beta,L)$, such that
		\begin{equation*}
			\avetT\sumjk \frac{|C_j^\tto|}{N}\|\nabla F_j^{(t)}(\vx_j^{(t)})\|^2\leq \frac{2(1-\beta)}{\gamma}\cdot
			\frac{F(\vx_1^{(0)},\vx_2^{(0)},\ldots,\vx_k^{(0)})-F^*}{T},
		\end{equation*}
		as long as $\gamma\leq \Bar{\gamma}(\alpha,\beta,L)$.
	\end{theorem}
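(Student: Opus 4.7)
The plan is to reduce the momentum recursion to a perturbed gradient descent on the auxiliary sequence $\vu_j^{(t)}$, and then mimic the telescoping argument of Theorem~\ref{thm:lloyd-gd-conv}. The starting point is the algebraic identity obtained by substituting the updates \eqref{eq:momentum-x}--\eqref{eq:momentum-m} into \eqref{eq:u-acceleration-var}: one checks that
\[
\vu_j^{(t+1)}=\vu_j^{(t)}-\frac{\gamma}{1-\beta}\nabla F_j^{(t)}(\vx_j^{(t)}),\qquad \vu_j^{(t)}-\vx_j^{(t)}=-\frac{\gamma\beta}{1-\beta}\vm_j^{(t-1)}.
\]
Thus the $\vu$-iterate executes a gradient step of size $\gamma/(1-\beta)$, except that the gradient is evaluated at $\vx_j^{(t)}$ rather than at $\vu_j^{(t)}$, and the discrepancy is exactly the scaled momentum vector.

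Next, I would introduce the Lyapunov quantity
\[
\Phi^{(t)}=\sum_{j=1}^k\frac{|C_j^{(t)}|}{N}F_j^{(t)}(\vu_j^{(t)})+c\sum_{j=1}^k\frac{|C_j^{(t)}|}{N}\|\vm_j^{(t-1)}\|^2,
\]
with a constant $c=c(\alpha,\beta,L)$ to be tuned. Since $C_j^{(t+1)}=C_j^{(t)}$ is a feasible competitor in the constrained reclassification step \eqref{eq:control-C}, one obtains
\[
\sum_{j=1}^k\frac{|C_j^{(t+1)}|}{N}F_j^{(t+1)}(\vu_j^{(t+1)})\leq\sum_{j=1}^k\frac{|C_j^{(t)}|}{N}F_j^{(t)}(\vu_j^{(t+1)}).
\]
Applying the $L$-smoothness descent inequality to each $F_j^{(t)}$ along $\vu_j^{(t)}\to\vu_j^{(t+1)}$, inserting $\pm\nabla F_j^{(t)}(\vx_j^{(t)})$ in the resulting cross term, and invoking the $L$-smoothness bound $\|\nabla F_j^{(t)}(\vu_j^{(t)})-\nabla F_j^{(t)}(\vx_j^{(t)})\|\leq \frac{L\gamma\beta}{1-\beta}\|\vm_j^{(t-1)}\|$ (via Young's inequality) yields an estimate of the form $-c_1(\gamma)\|\nabla F_j^{(t)}(\vx_j^{(t)})\|^2+c_2(\gamma)\|\vm_j^{(t-1)}\|^2$ for each coordinate $j$. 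The momentum piece of $\Phi^{(t)}$ is then controlled via the recursion $\vm_j^{(t)}=\beta\vm_j^{(t-1)}+\nabla F_j^{(t)}(\vx_j^{(t)})$ together with the size ratio $|C_j^{(t+1)}|/|C_j^{(t)}|\leq\alpha$ supplied by \eqref{eq:control-C}.

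Choosing $c$ and $\bar{\gamma}(\alpha,\beta,L)$ so that the aggregated $\|\vm_j^{(t-1)}\|^2$ contributions are absorbed while leaving a net term $-\frac{\gamma}{2(1-\beta)}\sum_j\frac{|C_j^{(t)}|}{N}\|\nabla F_j^{(t)}(\vx_j^{(t)})\|^2$ in $\Phi^{(t+1)}-\Phi^{(t)}$, telescoping over $t=0,\ldots,T-1$ and using $\Phi^{(T)}\geq F^*$ together with $\Phi^{(0)}=F(\vx_1^{(0)},\ldots,\vx_k^{(0)})$ (which holds with the conventions $\vm_j^{(-1)}=\vzero$ and $\vu_j^{(0)}=\vx_j^{(0)}$, provided $C_j^{(0)}$ is the optimal partition for the initialization) gives the stated $\calO(1/T)$ bound with prefactor $2(1-\beta)/\gamma$. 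The main obstacle is the simultaneous control of two coupled perturbations: the mismatch $\vu_j^{(t)}-\vx_j^{(t)}$ in gradient evaluations, and the class-size ratio $|C_j^{(t+1)}|/|C_j^{(t)}|$ that distorts the momentum weights across consecutive iterations. The threshold $\bar{\gamma}$ has to be small enough that the $\calO(L\gamma^2)$ smoothness term is dominated by the $\calO(\gamma)$ descent, the $\calO(\gamma^3)$ momentum-mismatch correction can be hidden inside the $c\|\vm_j^{(t-1)}\|^2$ piece of $\Phi$, and the geometric factor $\alpha$ introduced by the weight change is offset; pinning down an explicit form of $\bar{\gamma}(\alpha,\beta,L)$ through this bookkeeping will be the most delicate step of the proof.
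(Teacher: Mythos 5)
Your proposal is correct and hits all the same key ingredients as the paper's proof: the identity $\vu_j^{(t+1)}-\vu_j^{(t)}=-\tfrac{\gamma}{1-\beta}\nabla F_j^{(t)}(\vx_j^{(t)})$ together with $\vu_j^{(t)}-\vx_j^{(t)}=-\tfrac{\gamma\beta}{1-\beta}\vm_j^{(t-1)}$, the $L$-smoothness descent inequality along $\vu_j^{(t)}\to\vu_j^{(t+1)}$, the insertion of $\pm\nabla F_j^{(t)}(\vx_j^{(t)})$ followed by Young's inequality, and the use of the controlled-reclassification factor $\alpha$ to relate cluster weights across consecutive iterations. The structural difference is purely in the bookkeeping of the momentum error: the paper keeps the per-iteration inequality as is, sums over $t$ to telescope the function values, and then bounds the accumulated sum $\sum_t\tfrac{|C_j^{(t)}|}{N}\|\vm_j^{(t-1)}\|^2$ separately by unrolling $\vm_j^{(t)}=\sum_{l\leq t}\beta^{t-l}\nabla F_j^{(l)}(\vx_j^{(l)})$, applying Jensen's inequality, and summing a geometric series in $(\alpha\beta)^{t-l}$ (so $\alpha\beta<1$ emerges as the implicit requirement); you instead absorb the momentum norm into a Lyapunov potential $\Phi^{(t)}$ and balance the one-step recursion $\vm_j^{(t)}=\beta\vm_j^{(t-1)}+\nabla F_j^{(t)}(\vx_j^{(t)})$ via Young's inequality plus the ratio $|C_j^{(t+1)}|/|C_j^{(t)}|\leq\alpha$. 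Both close, and lead to a threshold $\bar\gamma(\alpha,\beta,L)$ of comparable form. One genuine advantage of your route is the ``feasible competitor'' step: since the controlled reclassification starts from $C_j^{(t)}$ and only makes objective-nonincreasing reassignments, you get $\sum_j\tfrac{|C_j^{(t+1)}|}{N}F_j^{(t+1)}(\vu_j^{(t+1)})\leq\sum_j\tfrac{|C_j^{(t)}|}{N}F_j^{(t)}(\vu_j^{(t+1)})$ directly and telescope the quantity $\Psi^{(t)}:=\sum_j\tfrac{|C_j^{(t)}|}{N}F_j^{(t)}(\vu_j^{(t)})$ without ever equating $\Psi^{(t)}$ with $F(\vU^{(t)})$. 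The paper instead replaces $\Psi^{(t)}$ by $F(\vU^{(t)})$; since $\Psi^{(t)}\geq F(\vU^{(t)})$ always, that replacement goes in the wrong direction for the inequality and is strictly valid only when $C_j^{(t)}$ happens to be the unconstrained optimal partition for $\vU^{(t)}$, which the size constraint \eqref{eq:control-C} can prevent. So your Lyapunov route is not only equivalent at the level of ideas but actually repairs an implicit gap in the paper's write-up. The remaining work --- choosing the Young's parameters and $c$ so that the $\|\vm_j^{(t-1)}\|^2$ contributions net out and an order-$\gamma/(1-\beta)$ descent survives --- is routine and yields an explicit $\bar\gamma$ of the same flavor as the paper's $\min\bigl(\tfrac{1-\beta}{2L},\tfrac{(1-\beta)^{3/2}(1-\alpha\beta)^{1/2}}{2\alpha^{1/2}L\beta}\bigr)$.
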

	
	\section{Numerical Experiments}
	\label{sec:num-exp}

	In this section, we conduct numerical experiments to demonstrate the efficiency of the proposed model and algorithms. Our code with documentation can be found at \url{https://github.com/LisangDing/Sum-of-Minimum_Optimization}.

	\subsection{Comparison between the sum-of-minimum model and the product formulation}
	\label{sec:compare-product}
	
	We consider two optimization models for generalized principal component analysis: the sum-of-minimum formulation \eqref{eq:GPCA} and another widely acknowledged formulation given by~\cites{peng2023block, vidal2005generalized}:
	\begin{equation}
		\label{eq:GPCA-prod}
		\Min_{\vvA_j^\top \vvA_j = I_r} \frac{1}{N}\sum_{i=1}^N \prod_{j=1}^k\|\vy_i^\top \vvA_j\|^2.
	\end{equation}
	The initialization for both formulations is generated by Algorithm~\ref{alg:init-A}. We use a slightly modified version of Algorithm~\ref{alg:Lloyd} to minimize \eqref{eq:GPCA} since the minimization of the group functions for GPCA admits closed-form solutions. In particular, we alternatively compute the minimizer of each group objective function as the update of $\vA_j$ and then reclassify the sub-functions. We use the block coordinate descent (BCD) method \cite{peng2023block} to minimize \eqref{eq:GPCA-prod}. The BCD algorithm alternatively minimizes $\vvA_j$ with all other $\vvA_l\ (l\neq j)$ being fixed. The pseudo-codes of both algorithms are included in Appendix~\ref{sec:supp-detail-product}.
	
	We set the cluster number $k\in\{2,3,4\}$, dimension $d\in\{4,5,6\}$, subspace co-dimension $r=d-2$, and the number of data points $N=1000$. The generalization of the dataset $\{\bvy_i\}_{i=1}^N$ is described in Appendix~\ref{sec:supp-detail-product}. We set the maximum iteration number as 50 for Algorithm~\ref{alg:Lloyd} with \eqref{eq:GPCA} and terminate the algorithm once the objective function stops decreasing, i.e., the partition/clustering remains unchanged. Meanwhile, we set the iteration number to 50 for the BCD algorithm \cite{peng2023block} with \eqref{eq:GPCA-prod}. The sythetic data generation is elaborated in Appendix~\ref{sec:supp-detail-product}. The classification accuracy of both methods is reported in Table~\ref{tab:GPCA}, where the classification accuracy is defined as the maximal matching accuracy with respect to the ground truth over all permutations. We observe that our model and algorithm lead to significantly higher accuracy. This is because, compared to \eqref{eq:GPCA-prod}, the formulation in \eqref{eq:GPCA} models the requirements more precisely, though it is more difficult to optimize due to the non-smoothness.
	
	\begin{table}[htb!]
		\caption{Cluster accuracy percentages of the sum-of-minimum (SoM) vs. the sum-of-product (SoP) GPCA models after 50 iterations.}
		\centering
		\begin{tabular}{c|c|ccc}
			\toprule
			&  & $d=4$ & $d=5$ & $d=6$ \\ \midrule
			\multirow{2}{*}{$k=2$}  & SoM & \textbf{98.24}        & \textbf{98.07}         & \textbf{98.19}         \\
			& SoP  &  81.88         &  75.90         &  73.33          \\\midrule
			\multirow{2}{*}{$k=3$}  & SoM   & \textbf{95.04}          & \textbf{94.98}         & \textbf{95.94}          \\
			& SoP  & 67.69         & 62.89         &  60.85          \\\midrule
			\multirow{2}{*}{$k=4$}  & SoM  & \textbf{91.30}        & \textbf{92.92}      & \textbf{93.73}          \\ 
			& SoP  & 62.36       &59.65       & 57.89          \\\bottomrule
		\end{tabular}
		
		\label{tab:GPCA}
	\end{table}
	
	Next, we compare the computational cost for our model and algorithms with that of the product model and the BCD algorithm. We observe that the BCD algorithm exhibited limited improvements in accuracy after the initial 10 iterations. Thus, for a fare comparison, we set both the maximum iterations for our model and algorithms and the iteration number for the BCD algorithm to 10. The accuracy rate and the CPU time are shown in Table~\ref{tab:GPCA-time}, from which one can see that the computational costs of our algorithm and the BCD algorithm are competitive, while our algorithm achieves much better classification accuracy.

	\begin{table}[htb!]
		\caption{Averaged cluster accuracy percentage / CPU time in seconds for GPCA after 10 iterations on sum-of-minimum (SoM) and the sum-of-product (SoP)  models.
		}
		\centering
		\begin{tabular}{c|c|cccc}
			\toprule
			& & $d=4$                 & $d=5$                 & $d=6$                 \\ \midrule
			\multirow{2}{*}{$k=2$}  & SoM & \textbf{97.84 / 0.08} & \textbf{97.93 / 0.08} & \textbf{98.01 / 0.08} \\
			& SoP & 81.78 / 0.14          & 75.76 / 0.14          & 73.24 / 0.15          \\ \midrule
			\multirow{2}{*}{$k=3$} & SoM  & \textbf{93.34 / 0.19} & \textbf{94.14 / 0.19} & \textbf{95.25 / 0.16} \\
			& SoP  & 67.18 / 0.20          & 62.76 / 0.22          & 60.80 / 0.20          \\ \midrule
			\multirow{2}{*}{$k=4$} & SoM  & \textbf{88.62 / 0.32} & \textbf{91.78 / 0.29} & \textbf{92.62 / 0.27} \\
			& SoP  & 61.52 / 0.26          & 59.37 / 0.27          & 57.82 / 0.27   \\ \bottomrule           
		\end{tabular}
		
		\label{tab:GPCA-time}
	\end{table}

	\subsection{Comparison between different initializations}
	\label{sec:compare-init}

	We present the performance of Lloyd's Algorithm \ref{alg:Lloyd} combined with different initialization methods. The initialization methods adopted in this subsection are:
	\begin{itemize}
		\item \textbf{Random initialization. } We initialize variables $\vx_1^{(0)},\vx_2^{(0)},\ldots,\vx_k^{(0)}$ with i.i.d. samples from the $d$-dimensional standard Gaussian distribution.
		\item \textbf{Uniform seeding initialization. }We uniformly sample $k$ different indices $i_1,i_2,\ldots,i_k$ from $[N]$, then we set $\vx_{i_j}^*$ as the initial value of $\vx_j^{(0)}$.
		\item \textbf{Proposed initialization. } We sample the $k$ indices using Algorithm \ref{alg:init-A} and initialize $\vx_j^{(0)}$ with the minimizer of the corresponding sub-function.
	\end{itemize}
	\textbf{Mixed linear regression.} Our first example is the $\ell_2$-regularized mixed linear regression. We add an $\ell_2$ regularization on each sub-function $f_i$ in \eqref{eq:mixed-linear-regression} to guarantee strong convexity, and the sum-of-minimum optimization objective function can be written as 
	$$
	\frac{1}{N}\sum_{i=1}^N \min_{j\in[k]}\left\{  \frac{1}{2}\left(g(\va_i;\vx_j)-b_i\right)^2 +\frac{\lambda}{2}\|\vx_j\|^2 \right\},
	$$
	where $\{(\va_i,b_i)\}_{i=1}^N$ collects all data points and $\lambda>0$ is a fixed parameter. The dataset $\{(\va_i,b_i)\}_{i=1}^N$ is generated as described in Appendix~\ref{sec:supp-detail-init}.
	
	Similar to the GPCA problem, we slightly modify Lloyd's algorithm since the $\ell_2$-regularized least-square problem can be solved analytically.
	Specifically, we use the minimizer of the group objective function as the update of $\vx_j$ instead of performing the gradient descent as in \eqref{eq:group_gd} or Algorithm~\ref{alg:Lloyd}. We perform the algorithm until a maximum iteration number is met or the objective function value stops decreasing. The detailed algorithm is given in Appendix \ref{sec:supp-detail-init}.

	In the experiment, the number of samples is set to $N=1000$ and we vary $k$ from 4 to 6 and $d$ (the dimension of $\va_i$ and $\vx_j$) from 4 to 8. For each problem with fixed cluster number and dimension, we repeat the experiment for 1000 times with different random seeds. In each repeated experiment, we record two metrics. If the output objective value at the last iteration is less than or equal to $F(\vx_1^+,\vx_2^+,\ldots,\vx_k^+)$, where $(\vx_1^+,\vx_2^+,\ldots,\vx_k^+)$ is the ground truth that generates the dataset $\{(\va_i,b_i)\}_{i=1}^N$, we consider the objective function to be nearly optimized and label the algorithm as successful on the task; otherwise, we label the algorithm as failed on the task. Additionally, we record the number of iterations the algorithm takes to output a result. 
	The result is displayed in Table~\ref{Table:linear-mix-regression}.
	
	\begin{table*}[htb!]
		\caption{The failing rate vs. the average iteration number of three initialization methods when solving mixed linear regression problems with different cluster numbers and dimensionality. A smaller failure rate and a lower average iteration number indicate better performance. The least failure rate among the three methods is bolded, and the least average iteration number under the same cluster number and dimension settings is underlined.}
		
		\resizebox{\textwidth}{!}{
			\begin{tabular}{c|c|ccccccc}
				\toprule 
				& \textbf{Init. Method} & & \textbf{$d=4$} & \textbf{$d=5$} & \hspace{-2mm}\textbf{$d=6$} & \textbf{$d=7$} & \textbf{$d=8$}  &  \\
				\midrule
				\multirow{3}{*}{$k=4$} & \textit{random}&                        & 0.056 / 17.577          & \textbf{0.031} / 18.378 & 0.038 / 19.923          & 0.058 / 21.631         & 0.071 / 22.344      &   \\ 
				& \textit{unif. seeding}   &                 & 0.057 / 16.139         & 0.034 / 16.885         & 0.050 / 18.022          & 0.055 / 18.708         & 0.075 / 19.959       &  \\ 
				& \textit{proposed} &                    & \textbf{0.050} / \textbf{14.551}  & 0.036 / \textbf{15.276}         & \textbf{0.034} / \textbf{16.020} & \textbf{0.044} / \textbf{16.936} & \textbf{0.051} / \textbf{17.409}& \\ \hline
				\multirow{3}{*}{$k=5$} & \textit{random}  &                      & 0.161 / 26.355         & 0.156 / 28.844         & 0.172 / 32.247         & 0.238 / 35.042         & 0.321 / 38.324      &   \\ 
				& \textit{unif. seeding} &                   & \textbf{0.145} / 23.728 & 0.136 / 25.914         & \textbf{0.143} / 27.671 & 0.198 / 29.935         & 0.256 / 32.662       &  \\ 
				& \textit{proposed}       &              & 0.162 / \textbf{21.552}         & \textbf{0.130} / \textbf{23.476}  & \textbf{0.143} / \textbf{25.933} & \textbf{0.161} / \textbf{27.268} & \textbf{0.217} / \textbf{29.086} &\\ \hline
				\multirow{3}{*}{$k=6$} & \textit{random}&                        & 0.363 / 35.831          & 0.382 / 41.043         & 0.504 / 43.999         & 0.594 / 47.918         & 0.739 / 48.730      &   \\ 
				& \textit{unif. seeding}   &                 & 0.347 / 31.536         & 0.350 / 35.230          & 0.408 / 39.688         & 0.524 / 42.453         & 0.596 / 43.117 &        \\ 
				& \textit{proposed}   &                  & \textbf{0.339} / \textbf{29.610} & \textbf{0.312} / \textbf{33.460} & \textbf{0.389} / \textbf{36.068} & \textbf{0.463} / \textbf{39.010} & \textbf{0.563} / \textbf{40.320}  & \\ \bottomrule
			\end{tabular}
		}
		
		\label{Table:linear-mix-regression}
	\end{table*}

	\textbf{Mixed nonlinear regression. }Our second experiment is on mixed nonlinear regression using 2-layer neural networks. We construct $k$ neural networks with the same structure and let the $j$-th neural network be:
	$$
	\psi(\va;\vW_j,\vp_j,\vq_j,o_j)=\vp_j^\top\textup{ReLU}(\vW_j \va +\vq_j )
	+ o_j.
	$$
	Here, $\va$ is the input data. We let $d_I$ be the input dimension and $d_H$ be the hidden dimension. The dimensions of the variables are $\va\in\bR^{d_I}, \vW_j\in\bR^{d_H\times d_I},\vp_j,\vq_j\in\bR^{d_H},o_j\in\bR.$ We denote $\theta_j=(\vW_j,\vp_j,\vq_j,o_j)$ as the trainable parameters in the neural network. For each trial, we prepare the ground truth $\theta_j^+$ and the dataset $\{(\va_i,b_i)\}_{i=1}^N$ as described in Appendix~\ref{sec:supp-detail-init}.
	We use the squared $\ell_2$ loss for each neural network and construct the $i$-th sub-function as:
	$$
	f_i(\theta)=\frac{1}{2}(\psi(\va_i;\theta)-b_i)^2+\frac{\lambda}{2}\|\theta\|^2,
	$$
	where we still use $\frac{\lambda}{2}\|\theta\|^2, \lambda>0$ as a regularization term. We perform parallel experiments on training the neural networks via Algorithm \ref{alg:Lloyd} using three different initialization methods. During the training process of neural networks, stochastic gradient descent is commonly used to manage limited memory, reduce training loss, and improve generalization. Moreover, the ADAM algorithm proposed in \cite{kingma2014adam} is widely applied. This optimizer is empirically observed to be less sensitive to hyperparameters, more robust, and to converge faster. To align with this practice, we replace the group gradient descent in Algorithm~\ref{alg:Lloyd} and the group momentum method in Algorithm~\ref{alg:Lloyd-momentum} with ADAM optimizer-based backward propagation for the corresponding group objective function. 
	
	We use two metrics to measure the performance of the algorithms. In one set of experiments, we train $k$ neural networks until the value of the loss function $F$ under parameters $\theta_1,\theta_2,\ldots,\theta_k$ is less than that under $\theta_1^+,\theta_2^+,\ldots,\theta_k^+$. We record the average iterations required to achieve the optimization loss. In the other set of experiments, we train $k$ neural networks for a fixed number of iterations. Then, we compute the training and testing loss of the trained neural network, where the training loss on the dataset $\{(\va_i,b_i)\}_{i=1}^N$ is defined as $\frac{1}{N}\sum_{i=1}^N \min_j\left(\frac{1}{2}(\psi(\va_i;\theta_j)-b_i)^2\right)$ and the testing loss is defined in a similar way.

	In our experiments, the training dataset size is $N=1000$ and the testing dataset size is $200$. The testing dataset is generated from the same distribution as the training data. Benefiting from ADAM's robust nature regarding hyperparameters, we use the default ADAM learning rate $\gamma=1e-3$. We set $r=10$ in Lloyd's Algorithm \ref{alg:Lloyd} and fix the cluster number $k=5$. We test on three different $(d_I,d_H)$ tuples: $(5,3)$, $(7,5)$, and $(10,5)$. The results can be found in Table \ref{tab:nonlinear-regression-1} and \ref{tab:nonlinear-regression-2}.
	\begin{table}[htb!]
		\caption{Average epochs for different seeding methods to achieve the ground truth model training loss.}
		\centering
		\begin{tabular}{cccc}
			\toprule
			$(d_I,d_H)$                 & (5,3) & (7,5) & (10,5) \\ \hline
			\textit{random}     & 329.4          & 132.1          & 130.8           \\
			\textit{unif. Seeding}  & 233.1          & 71.2           & 67.6            \\
			\textit{proposed} & \textbf{181.4}          & \textbf{49.3}           & \textbf{47.2}            \\ \bottomrule
		\end{tabular}
		\label{tab:nonlinear-regression-1}
	\end{table}
	
	\begin{table}[htb!]
		\caption{The training errors vs. the testing errors (unit: 10e-3) of Lloyd's algorithm with fixed training epoch numbers.}
		\centering
		\begin{tabular}{cccc}
			\toprule
			$(d_I,d_H)$ / Iter.                   & (5,3) / 300 & (7,5) / 150 & (10,5) / 150 \\ \midrule
			\textit{random}      & 4.26 / 4.63         & 4.57 / 5.54         & 4.62 / 5.82          \\
			\textit{unif. Seeding}   & 3.86 / 4.25         & 3.96 / 4.77         & 3.56 / 4.52          \\
			\textit{proposed}  & \textbf{3.44} / \textbf{3.93}         & \textbf{3.51} / \textbf{4.37}         & \textbf{3.39} / \textbf{4.34}          \\ \bottomrule
		\end{tabular}
		\label{tab:nonlinear-regression-2}
	\end{table}
	
	We can conclude from Table~\ref{Table:linear-mix-regression}, \ref{tab:nonlinear-regression-1}, and \ref{tab:nonlinear-regression-2} that the careful seeding Algorithm~\ref{alg:init-A} generates the best initialization in most cases. This initialization algorithm results in the fewest iterations required by Lloyd's algorithm to converge, the smallest final loss, and the highest probability of finding the ground-truth clustering.

	\section{Conclusion}
	\label{sec:conclude}
	
	This paper proposes a general framework for sum-of-minimum optimization, as well as efficient initialization and optimization algorithms. Theoretically, tight bounds are established for smooth and strongly convex sub-functions $f_i$. Though this work is motivated by classic algorithms for the $k$-means problem, we extend the ideas and theory significantly for a broad family of tasks. Furthermore, the numerical efficiency is validated for generalized principal component analysis and mixed linear and nonlinear regression problems. Future directions include developing algorithms with provable guarantees for non-convex $f_i$ and exploring empirical potentials on large-scale tasks. 
	
	\section*{Acknowledgements} Lisang Ding receives support from Air Force Office of Scientific Research Grants MURI-FA9550-18-1-0502. We thank Liangzu Peng for fruitful discussions on GPCA.
	
	\bibliographystyle{amsxport}
	\bibliography{reference}
	
	\appendix
	\section{Proof of Proposition \ref{prop:S-k-sep}}
	\allowdisplaybreaks
	In this section, we provide a proof of the proposition in Section \ref{sec:preliminary}.
	\begin{proposition}[Restatement of Proposition \ref{prop:S-k-sep}]
		Under Assumption \ref{asp:mu-strong-convex}, if $|S^*|\geq k$, the optimization problem \eqref{eq:sum-of-min-opt} admits finitely many minimizers.
	\end{proposition}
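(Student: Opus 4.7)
The plan is to pass to the equivalent partition-based reformulation \eqref{eq:sum-of-min-opt-2}. For any partition $(C_1,\ldots,C_k)\in\calP^k_N$, the inner problem decouples across clusters; and because $\sum_{i\in C_j}f_i$ is $\mu$-strongly convex whenever $C_j\neq\emptyset$, it admits a unique minimizer on that block. Therefore, restricted to partitions with \emph{no} empty block, there are only finitely many candidate tuples (one per partition, and $|\calP^k_N|<\infty$). The only way \eqref{eq:sum-of-min-opt} could have infinitely many minimizers is for some minimizer's canonical partition to leave a block empty, turning the corresponding $\vx_j$ into a free variable; the whole task is to rule this out under $|S^*|\geq k$.

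The main obstacle, and the key lemma I would prove first, is the strict inequality $F^*<F_{k-1}^*$, where $F_{k-1}^*$ denotes the analogous $(k-1)$-center optimum $\inf_{\vx_1,\ldots,\vx_{k-1}}\frac{1}{N}\sum_i\min_{j\leq k-1}f_i(\vx_j)$. This is where the assumption $|S^*|\geq k$ enters. By coercivity from $\mu$-strong convexity, the $(k-1)$-center problem attains its infimum at some $(\vx_1,\ldots,\vx_{k-1})$. Since $|S^*|\geq k$, a pigeonhole argument produces an element $\vx_{i^\star}^*\in S^*\setminus\{\vx_1,\ldots,\vx_{k-1}\}$. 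Appending $\vx_k:=\vx_{i^\star}^*$ as a $k$-th center weakly decreases $\min_j f_i(\vx_j)$ for every $i$ and \emph{strictly} decreases it for $i=i^\star$, because $\vx_{i^\star}^*$ is the unique minimizer of $f_{i^\star}$ (by strong convexity) and it is absent from the first $k-1$ centers. Hence $F^*\leq F(\vx_1,\ldots,\vx_k)<F_{k-1}^*$.

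Given this strict separation, the proposition follows by contradiction. Let $(\vx_1^*,\ldots,\vx_k^*)$ be any minimizer of $F$, and introduce the canonical partition via lexicographic tie-breaking: $i\in C_j$ iff $j=\min\{l\in[k]:f_i(\vx_l^*)=\min_m f_i(\vx_m^*)\}$. If some block $C_{j_0}$ were empty, then for every $i$ the value $\min_m f_i(\vx_m^*)$ is still attained at an index different from $j_0$, so deleting $\vx_{j_0}^*$ from the tuple leaves $F$ unchanged and yields a $(k-1)$-center configuration of value $F^*$, contradicting the strict inequality $F^*<F_{k-1}^*$ established above. Thus every block is non-empty at any minimizer, and each $\vx_j^*$ is forced to equal the unique minimizer of $\sum_{i\in C_j}f_i$ by strong convexity.

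Finally, the map ``minimizer of $F$ $\mapsto$ canonical partition in $\calP^k_N$'' is well-defined and injective into a finite set, which proves that \eqref{eq:sum-of-min-opt} has finitely many minimizers. The only delicate step is the strict-separation lemma $F^*<F_{k-1}^*$; everything else is straightforward bookkeeping once one adopts the lexicographic tie-breaking convention for the canonical partition.
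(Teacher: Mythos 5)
Your proof is correct and follows essentially the same strategy as the paper's: form the canonical partition of a minimizer with lexicographic tie-breaking, rule out empty blocks using $|S^*|\geq k$, and conclude by injecting minimizers into the finite set of nonempty partitions (each block's unique optimum being forced by $\mu$-strong convexity). The one difference in packaging is that the paper rules out an empty block by a direct swap \emph{at the minimizer itself} — replace the unused $\vx_{j_0}^*$ by some $\vz\in S^*\setminus\{\vx_1^*,\dots,\vx_k^*\}$, which strictly decreases $F$ and contradicts optimality — whereas you go through the global strict-separation lemma $F^*<F_{k-1}^*$. Your route is sound but costs you an extra existence step, and your stated justification for it is off: $F_{k-1}$ is \emph{not} coercive as a function of $(\vx_1,\dots,\vx_{k-1})$ (send one $\vx_j$ to infinity while the others sit at cluster optima and $F_{k-1}$ stays bounded), so "coercivity from $\mu$-strong convexity" does not directly give attainment. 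Attainment of $F_{k-1}^*$ is nonetheless true, but the correct reason is the finite-partition argument: there are finitely many ways to split $[N]$ into at most $k-1$ nonempty clusters, each cluster's averaged objective is $\mu$-strongly convex and hence has a minimizer, and $F_{k-1}^*$ is the minimum of these finitely many attained values. The paper's swap argument bypasses this issue entirely, which is the main thing it buys over your version.
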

	\begin{proof}
		If $|S^*|=k$, then the only minimizer up to a permutation of indices is $\vx_1,\vx_2,\ldots,\vx_k$, such that 
		$$
		\{\vx_1,\vx_2,\ldots,\vx_k\}=S^*.
		$$
		Next we consider the case where $|S^*|>k$. Let $\calR$ be the set of all minimizers of \eqref{eq:sum-of-min-opt}. Due to the $\mu$-strong convexity of $f_i$, the set $\calR$ is nonempty. Let $\calT$ be the set of all partitions $C_1,C_2,\ldots,C_k$ of $[N]$, such that $C_j\not=\emptyset$ for all $j\in[k]$. The set $\calT$ is finite. Next, we show there is an injection from $\calR$ to $\calT$. For $\vX=(\vx_1,\vx_2,\ldots,\vx_k)\in\calR$, we recurrently define 
		$$
		C_j^{\vX}=\{i\in[N]\,|\,f_i(\vx_j)=\min_{l}(f_i(\vx_{l}))\}\backslash \left(\cup_{1\leq j'\leq j-1}C_{j'}^\vX\right).
		$$
		We claim that all $C_j^\vX$'s are nonempty. Otherwise, if there is an index $j$ such that $C_j^\vX=\emptyset$, we have a $\vz\in S^*\backslash \{\vx_1,\vx_2,\ldots,\vx_k\}$. Replacing the $j$-th parameter $\vx_j$ with $\vz$, we have
		$$
		F(\vx_1,\vx_2,\ldots,\vx_k)>F(\vx_1,\vx_2,\ldots,\vx_{j-1},\vz,\vx_{j+1},\ldots,\vx_k).
		$$
		This contradicts the assumption that $\vX$ is a minimizer of \eqref{eq:sum-of-min-opt}. Thus, $\vX\mapsto (C_1^\vX,C_2^\vX,\ldots,C_k^\vX)$ is a well-defined map from $\calR$ to $\calT$. Consider another $\vY=(\vy_1,\vy_2,\ldots,\vy_k)\in\calR$. If $C_j^\vX=C_j^\vY$ for all $j\in[k]$, due to the $\mu$-strong convexity of $f_i$'s, we have
		$$
		\vy_j=\argmin_\vz \sum_{i\in C_j^\vY}f_i(\vz)=\argmin_\vz \sum_{i\in C_j^\vX}f_i(\vz)=\vx_j,\quad \forall j\in[k].
		$$
		Thus, the map defined above is injective. Overall, $\calR$ is a finite set.
	\end{proof}
	
	\section{Algorithm details}
	\label{sec:alg-detail}
	In this section, we provide the details of the algorithms presented in Section \ref{sec:alg}. 
	
	\subsection{Initialization with alternative scores}
	\label{sec:initialization-D-detail}
	When the score function  $v_i^{(j)}$ is taken as the squared gradient norm as in \eqref{eq:v-score-D}, the pseudo-code of the initialization can be found in Algorithm \ref{alg:init-D}.
	\begin{algorithm}[htb!]
		\caption{Initialization}
		\label{alg:init-D}
		\begin{algorithmic}[1]
			\State Sample $i_1$ uniformly at random from $[N]$ and compute
			\begin{equation*}
				\vx_1^{(0)} = \vx_{i_1}^* = \argmin_\vx f_{i_1}(\vx).
			\end{equation*}
			\For{$j=2,3,\dots,k$}
			\State Compute scores $\vv^{(j)}=\left(v_1^{(j)},v_2^{(j)},\ldots,v_N^{(j)}\right)$ via
			\begin{equation*}
				v^{(j)}_i = \min_{1\leq j'\leq j-1} \left\| \nabla f_i(\vx_{j'}^{(0)}) \right\|^2.
			\end{equation*}
			\State Compute the sampling weights $\vw^{(j)}=\left(w^{(j)}_1, \dots,w^{(j)}_N\right)$ by normalizing $\{v_i^{(j)}\}_{i=1}^N$,
			\begin{equation*}
				w^{(j)}_i=\frac{v_i^{(j)}}{\sum_{i'=1}^Nv_{i'}^{(j)}}.
			\end{equation*}
			\State Sample $i_j\in[N]$ according to the weights $\vw^{(j)}$ and compute
			\begin{equation*}
				\vx_j^{(0)} = \vx_{i_j}^* = \argmin_\vx f_{i_j}(\vx).
			\end{equation*}
			\EndFor
		\end{algorithmic}
	\end{algorithm}

	\subsection{Details on momentum Lloyd's Algorithm}
	In this section, we elaborate on the details of momentum Lloyd's Algorithm \ref{alg:Lloyd-momentum}. We use $\vx_1^\tto,\vx_2^\tto,\ldots,\vx_k^\tto$ as the $k$ variables to be optimized. Correspondingly, we introduce $\vm_1^\tto,\vm_2^\tto,\ldots,\vm_k^\tto$ as their momentum. We use the same notation $F_j^\tto$ in \eqref{eq:F-j-t} as the group objective function. In each iteration, we update $\vx$ using momentum gradient descent and update $\vm$ using the gradient of the group function.
	\begin{align*}
		&\vx_j^{(t+1)}=\vx_j^\tto-\gamma \vm_j^\tto,\\
		&\vm_j^{(t+1)}=\beta \vm_j^\tto+\nabla F_j^{(t+1)}(\vx_j^{(t+1)}).
	\end{align*}
	The update of $C_j^\tto$ in the momentum algorithm is different from the Lloyd's Algorithm \ref{alg:Lloyd}. We introduce an acceleration quantity $$\vu_j^{(t+1)}=\frac{1}{1-\beta}(\vx_j^{(t+1)}-\beta \vx_j^\tto).$$ 
	Each class is then renewed around the center $\vu_j^{(t+1)}$. We update index $i\in [N]$ to the class $C_j^{(t+1)}$ where $f_i(\vu_j^{(t+1)})$ attains the minimum value among all $j\in[k]$.
	To ensure the stability of the momentum accumulation, we further introduce a controlled reclassification method. We set a reclassification factor $\alpha>1$. We update $C_j^\tto$ to $C_j^{(t+1)}$ in the following way to ensure
	$$\frac{1}{\alpha}|C_j^\tto|\leq |C_j^{(t+1)}| \leq \alpha |C_j^\tto|.$$
	The key idea is to carefully reclassify each index one by one until the size of one class breaks the above restriction.
	We construct $C_{j,0}=C_j^\tto, j\in[k]$ as the initialization of the reclassification.
	We randomly, non-repeatedly pick indices $i$ from $[N]$ one by one. For $l$ looping from 1 to $N$, we let $C_{j,l-1}, j\in[k]$ be the classification before the $l$-th random index is picked. Let $i_l$ be the $l$-th index sampled. We reassign $i_l$ to the $j$-th class, such that
	$$
	f_{i_l}(\vu_j^{(t+1)})=\min_{j'\in[k]}f_{i_l}(\vu_{j'}^{(t+1)}).
	$$
	There will be at most two classes changed due to the one-index reassignment. We update the class notations from $C_{j',l-1}$ to $C_{j',l}$ for all $j'\in [N]$. If there is any change between $C_{j',l-1}$ and $C_{j',l}$, we check whether 
	$$
	\frac{1}{\alpha} |C_{j'}^\tto| \leq |C_{j'}|\leq \alpha |C_{j'}^\tto|
	$$
	holds. If the above restriction holds for all $j'\in[N]$, we accept the reclassification and move on to the next index sample. Otherwise, we stop the process and return $C_j^{(t+1)}=C_{j,l-1}, j\in [k]$. If the reclassification trial successfully loops to the last index. We assign $C_j^{(t+1)}=C_{j,N}, j\in [k]$.

	\section{Initialization error bounds}
	\label{sec:init-error-bound}
	In this section, we prove the error bounds of the initialization Algorithms \ref{alg:init-A} and \ref{alg:init-D}. Before our proof, we prepare the following concepts and definitions.
	\begin{definition}
		For any nonempty $C\subset[N]$, we define
		$$
		\Delta_C:=\frac{1}{|C|}\sum_{i\in C}\sum_{i'\in C}\|\vx_i^*-\vx_{i'}^*\|^2.
		$$
	\end{definition}
	\begin{definition}
		Let $\calI\subset [N]$ be an index set, $\calM\subset \bR^d$ be a finite set, we define
		\begin{align*}
			&\calA(\calI,\calM)=\sum_{i\in\calI}\min_{\vz\in\calM}(f_i(\vz)-f_i(\vx_i^*)),\\
			&\calD(\calI,\calM)=\sum_{i\in\calI}\min_{\vz\in\calM}\|\nabla f_i(\vz)\|^2.
		\end{align*}
	\end{definition}
	Under the $\mu$-strong convexity and $L$-smooth Assumptions \ref{asp:L-smooth} and \ref{asp:mu-strong-convex}, we immediately have
	$$
	\frac{1}{2L}\calD(\calI,\calM)\leq \calA(\calI,\calM)\leq\frac{1}{2\mu}\calD(\calI,\calM).
	$$
	Besides, for disjoint index sets $\calI_1,\calI_2$, we have
	\begin{gather*}
		\calA(\calI_1\cup \calI_2,\calM)=\calA(\calI_1,\calM)+\calA( \calI_2,\calM),\\
		\calD(\calI_1\cup \calI_2,\calM)=\calD(\calI_1,\calM)+\calD( \calI_2,\calM).
	\end{gather*}
	
	For the problem \eqref{eq:sum-of-min-opt}, the optimal solution exists due to the strong convexity assumption on $f_i$'s. We pick one set of optimal solutions $\vz_1^*,\vz_2^*,\ldots,\vz_k^*$. We let
	$$
	\calM_{\textup{OPT}}=\{\vz_1^*,\vz_2^*,\ldots,\vz_k^*\}.
	$$
	
	Based on this optimal solutions, we introduce $(A_1,A_2,\ldots,A_k)$ as a partition of $[N]$. $A_j$'s are disjoint with each other and
	$$
	\bigcup_{j\in [k]}A_j =[N].
	$$
	Besides, for all $i\in A_j$, $f_i(\vx)$ attains minimum at $\vz_j^*$ over $\calM_{\textup{OPT}}$,
	$$
	f_i(\vz_j^*)-f_i(\vx_i^*)=\min_{j'\in [k]} \left( f_i(\vz_{j'}^*)-f_i(\vx_i^*) \right).
	$$
	The choice of $\calM_{\textup{OPT}}$ and $(A_1,A_2,\ldots,A_k)$ is not unique. We carefully choose them so that $A_j$ are non-empty for each $j\in[k]$.
	
	\begin{lemma}
		\label{lem:EA-i-uniform}
		Suppose that Assumption \ref{asp:L-smooth} holds. Let $\calI$ be a nonempty index subset of $[N]$ and let $i$ be sampled uniformly at random from $\calI$. We have
		$$
		\bE_i\, \calA(\calI,\{ \vx_i^* \})\leq \frac{L}{2}\Delta_\calI. 
		$$
	\end{lemma}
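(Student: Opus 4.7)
The plan is to reduce the claim to a one-line application of the descent lemma, averaged over $i\in\calI$.

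First, I would unpack the left-hand side. Since the candidate set $\{\vx_i^*\}$ is a singleton, the inner ``$\min$'' in the definition of $\calA$ disappears, giving
\[
\calA(\calI,\{\vx_i^*\}) \;=\; \sum_{i'\in\calI}\bigl(f_{i'}(\vx_i^*)-f_{i'}(\vx_{i'}^*)\bigr).
\]
Averaging over $i$ sampled uniformly from $\calI$ yields the double sum
\[
\bE_i\,\calA(\calI,\{\vx_i^*\}) \;=\; \frac{1}{|\calI|}\sum_{i\in\calI}\sum_{i'\in\calI}\bigl(f_{i'}(\vx_i^*)-f_{i'}(\vx_{i'}^*)\bigr),
\]
which already matches the structure of $\Delta_\calI$ up to replacing squared distances by function-value gaps.

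Next, I would estimate each summand using $L$-smoothness. Assumption \ref{asp:L-smooth} implies the standard descent inequality
\[
f_{i'}(y)\leq f_{i'}(\vx_{i'}^*) + \nabla f_{i'}(\vx_{i'}^*)^\top(y-\vx_{i'}^*) + \frac{L}{2}\|y-\vx_{i'}^*\|^2 \qquad \forall\,y\in\bR^d.
\]
Because $\vx_{i'}^*$ is the (unconstrained) minimizer of the differentiable function $f_{i'}$, the first-order optimality condition gives $\nabla f_{i'}(\vx_{i'}^*)=\vzero$. Specializing to $y=\vx_i^*$ therefore produces
\[
f_{i'}(\vx_i^*)-f_{i'}(\vx_{i'}^*) \;\leq\; \frac{L}{2}\,\|\vx_i^*-\vx_{i'}^*\|^2.
\]

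Finally, I would substitute this bound into the double sum and recognize $\Delta_\calI$:
\[
\bE_i\,\calA(\calI,\{\vx_i^*\}) \;\leq\; \frac{L}{2}\cdot\frac{1}{|\calI|}\sum_{i\in\calI}\sum_{i'\in\calI}\|\vx_i^*-\vx_{i'}^*\|^2 \;=\; \frac{L}{2}\,\Delta_\calI,
\]
which is exactly the desired inequality. There is essentially no obstacle here: strong convexity is not needed, and the only subtlety is to invoke $\nabla f_{i'}(\vx_{i'}^*)=\vzero$ to eliminate the linear term in the descent lemma before averaging.
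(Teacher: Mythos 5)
Your proof is correct and follows essentially the same route as the paper: expand $\calA(\calI,\{\vx_i^*\})$ into a double sum, apply the $L$-smoothness bound $f_{i'}(\vx_i^*)-f_{i'}(\vx_{i'}^*)\leq\frac{L}{2}\|\vx_i^*-\vx_{i'}^*\|^2$ (using $\nabla f_{i'}(\vx_{i'}^*)=\vzero$), and identify $\Delta_\calI$. You merely spell out the intermediate step the paper leaves implicit.
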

	\begin{proof}We have the following direct inequality.
		\begin{align*}
			\bE_i\, \calA(\calI,\{ \vx_i^* \})&=\frac{1}{|\calI|}\sum_{i\in\calI} \calA(\calI,\{ \vx_i^* \})\\
			&=\frac{1}{|\calI|}\sum_{i\in\calI}\sum_{i'\in\calI} \left(f_{i'}(\vx_i^*)-f_{i'}(\vx_{i'}^*)\right) \\
			&\leq \frac{1}{|\calI|}\sum_{i\in\calI}\sum_{i'\in\calI} \frac{L}{2}\|\vx_i^*-\vx_{i'}^*\|^2\\
			&=\frac{L}{2}\Delta_\calI.
		\end{align*}
	\end{proof}
	
	\begin{lemma}
		\label{lem:diff-Di-Di'}
		Let $\calM$ be a fixed finite set in $\bR^d$. For two indices $i\not=i'$, we have
		$$
		\calA(\{i\},\calM)\leq \frac{2L}{\mu}\calA(\{i'\},\calM)+L\| \vx_i^*-\vx_{i'}^* \|^2
		$$
	\end{lemma}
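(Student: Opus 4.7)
The plan is to exploit the fact that for a single index, $\calA(\{i\},\calM)$ is an \emph{infimum} over $\calM$, so it is bounded above by the value of $f_i(\vz)-f_i(\vx_i^*)$ at \emph{any} particular $\vz\in\calM$. The natural choice is to pick the minimizer for the other index, namely $\vz^*\in\argmin_{\vz\in\calM}(f_{i'}(\vz)-f_{i'}(\vx_{i'}^*))$, and then bound $f_i(\vz^*)-f_i(\vx_i^*)$ in terms of $\calA(\{i'\},\calM)$ plus a term that accounts for the gap between the two minimizers $\vx_i^*$ and $\vx_{i'}^*$.

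First, I would apply $L$-smoothness of $f_i$ at its own minimizer $\vx_i^*$ to get $f_i(\vz^*)-f_i(\vx_i^*)\leq \tfrac{L}{2}\|\vz^*-\vx_i^*\|^2$. Next, I would use the standard inequality $\|\vz^*-\vx_i^*\|^2\leq 2\|\vz^*-\vx_{i'}^*\|^2+2\|\vx_{i'}^*-\vx_i^*\|^2$ to split this into a ``how close $\vz^*$ is to $\vx_{i'}^*$'' piece and the explicit gap $\|\vx_i^*-\vx_{i'}^*\|^2$. Finally, to convert $\|\vz^*-\vx_{i'}^*\|^2$ into $\calA(\{i'\},\calM)$, I would invoke $\mu$-strong convexity of $f_{i'}$ at $\vx_{i'}^*$, which gives $\tfrac{\mu}{2}\|\vz^*-\vx_{i'}^*\|^2\leq f_{i'}(\vz^*)-f_{i'}(\vx_{i'}^*)=\calA(\{i'\},\calM)$.

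Chaining these bounds yields
\[
\calA(\{i\},\calM)\leq f_i(\vz^*)-f_i(\vx_i^*)\leq L\|\vz^*-\vx_{i'}^*\|^2+L\|\vx_i^*-\vx_{i'}^*\|^2\leq \tfrac{2L}{\mu}\calA(\{i'\},\calM)+L\|\vx_i^*-\vx_{i'}^*\|^2,
\]
which is exactly the claim. There is really no hard step here; the only thing to be careful about is making sure the correct minimizer (that of $f_{i'}$ over $\calM$, not $f_i$) is plugged in, since otherwise one cannot relate $\|\vz^*-\vx_{i'}^*\|^2$ to $\calA(\{i'\},\calM)$ via strong convexity. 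The factor $2L/\mu$ on the right-hand side is the direct consequence of pairing the upper $L$-smoothness bound for $f_i$ with the lower $\mu$-strong convexity bound for $f_{i'}$, together with the factor $2$ from the triangle inequality.
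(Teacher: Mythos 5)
Your proof is correct and is essentially the same argument the paper gives: upper bound $\calA(\{i\},\calM)$ by $L$-smoothness of $f_i$, split $\|\vz-\vx_i^*\|^2$ via the triangle/Young inequality to produce the $\|\vx_i^*-\vx_{i'}^*\|^2$ term, and convert $\|\vz-\vx_{i'}^*\|^2$ into $\calA(\{i'\},\calM)$ via $\mu$-strong convexity of $f_{i'}$. The only cosmetic difference is that you instantiate $\vz$ at the $i'$-optimal point $\vz^*$ at the outset, while the paper carries $\min_{\vz\in\calM}$ through each line and lands on the same bound.
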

	\begin{proof}We have the following inequality.
		\begin{align*}
			\calA(\{i\},\calM)&=\min_{\vz\in\calM} \left(f_i(\vz)-f_i(\vx_i^*)\right)\\
			&\leq \min_{\vz\in\calM} \frac{L}{2}\|\vz-\vx_i^*\|^2\\
			&\leq\min_{\vz\in\calM}L(\|\vz-\vx_{i'}^*\|^2+\|\vx_{i'}^*-\vx_i^*\|^2)\\
			&\leq \frac{2L}{\mu}\min_{z\in\calM} \left(f_{i'}(\vz)-f_{i'}(\vx_{i'}^*)\right) +L\|\vx_{i'}^*-\vx_i^*\|^2\\
			&= \frac{2L}{\mu}\calA(\{i'\},\calM)+L\| \vx_i^*-\vx_{i'}^* \|^2.
		\end{align*}
	\end{proof}
	
	\begin{lemma}
		\label{lem:EA-delta-bound}
		Given an index set $\calI$ and a finite point set $\calM$, suppose that $\calA(\calI,\calM)> 0.$ If we randomly sample an index $i\in\calI$ with probability $\frac{\calA(\{i\},\calM)}{\calA(\calI,\calM)}$, then we have the following inequality,
		$$
		\bE \calA(\calI,\calM\cup\{\vx_i^*\})\leq \left(\frac{L^2}{\mu}+L\right) \Delta_\calI.
		$$
	\end{lemma}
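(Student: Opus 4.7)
The plan is to expand the expectation directly and then use Lemma \ref{lem:diff-Di-Di'} to eliminate the factor $\calA(\{i\},\calM)$ from the sampling weight, so that only pairwise distances $\|\vx_i^*-\vx_{i'}^*\|^2$ appear and can be collected into $\Delta_\calI$. Writing the sampling probability as $p_i=\calA(\{i\},\calM)/\calA(\calI,\calM)$, the expectation factors as
\begin{equation*}
\bE\calA(\calI,\calM\cup\{\vx_i^*\})=\sum_{i\in\calI}p_i\sum_{i'\in\calI}\calA(\{i'\},\calM\cup\{\vx_i^*\}).
\end{equation*}

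Next, I would produce two complementary bounds on each inner term $\calA(\{i'\},\calM\cup\{\vx_i^*\})$. Since adding $\vx_i^*$ to $\calM$ can only shrink the minimum, $\calA(\{i'\},\calM\cup\{\vx_i^*\})\leq\calA(\{i'\},\calM)$; on the other hand, evaluating at $\vx_i^*$ together with $L$-smoothness and $\nabla f_{i'}(\vx_{i'}^*)=0$ gives $\calA(\{i'\},\calM\cup\{\vx_i^*\})\leq f_{i'}(\vx_i^*)-f_{i'}(\vx_{i'}^*)\leq\tfrac{L}{2}\|\vx_i^*-\vx_{i'}^*\|^2$. The key structural move is to use Lemma \ref{lem:diff-Di-Di'} to split the sampling weight: averaging the inequality $\calA(\{i\},\calM)\leq\tfrac{2L}{\mu}\calA(\{i'\},\calM)+L\|\vx_i^*-\vx_{i'}^*\|^2$ over $i'\in\calI$ gives
\begin{equation*}
p_i=\frac{\calA(\{i\},\calM)}{\calA(\calI,\calM)}\leq\frac{2L}{\mu|\calI|}+\frac{L}{|\calI|\,\calA(\calI,\calM)}\sum_{i'\in\calI}\|\vx_i^*-\vx_{i'}^*\|^2.
\end{equation*}

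Substituting this two-term bound for $p_i$ into the expanded expectation splits $\bE\calA(\calI,\calM\cup\{\vx_i^*\})$ into two double sums. For the first, I would pair it with the bound $\calA(\{i'\},\calM\cup\{\vx_i^*\})\leq\tfrac{L}{2}\|\vx_i^*-\vx_{i'}^*\|^2$; this collapses to $\tfrac{L^2}{\mu|\calI|}\sum_{i,i'}\|\vx_i^*-\vx_{i'}^*\|^2=\tfrac{L^2}{\mu}\Delta_\calI$ by the definition of $\Delta_\calI$. For the second, I would instead use $\calA(\{i'\},\calM\cup\{\vx_i^*\})\leq\calA(\{i'\},\calM)$, so that $\sum_{i'}\calA(\{i'\},\calM)=\calA(\calI,\calM)$ cancels the denominator, leaving $\tfrac{L}{|\calI|}\sum_{i,i''}\|\vx_i^*-\vx_{i''}^*\|^2=L\Delta_\calI$. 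Summing the two contributions yields the desired bound $\left(\tfrac{L^2}{\mu}+L\right)\Delta_\calI$.

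The only real obstacle is choosing, for each of the two pieces arising from the split of $p_i$, the correct one of the two upper bounds on $\calA(\{i'\},\calM\cup\{\vx_i^*\})$: the ``$\calA$-part'' of $p_i$ must be paired with the ``$\calA$-bound'' so that $\calA(\calI,\calM)$ cancels, and the ``distance-part'' of $p_i$ must be paired with the ``distance-bound'' so that everything assembles into $\Delta_\calI$; the remaining arithmetic is routine reindexing and symmetrization of double sums.
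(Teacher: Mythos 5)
Your proof is correct and matches the paper's argument essentially verbatim: both expand the expectation, apply Lemma~\ref{lem:diff-Di-Di'} averaged over $\calI$ to split the sampling weight $\calA(\{i\},\calM)/\calA(\calI,\calM)$ into a constant piece and a pairwise-distance piece, then pair the constant piece with the $\frac{L}{2}\|\vx_i^*-\vx_{i'}^*\|^2$ bound and the distance piece with the $\calA(\{i'\},\calM)$ bound (so that $\calA(\calI,\calM)$ cancels), assembling both into $\Delta_\calI$. The only difference is presentational: the paper keeps $\min(\calA(\{i'\},\calM),\,f_{i'}(\vx_i^*)-f_{i'}(\vx_{i'}^*))$ explicit and drops one argument or the other at the final step, whereas you state the two complementary upper bounds up front; the underlying manipulation is identical.
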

	\begin{proof}We consider the expectation of $\calA(\calI,\calM\cup\{\vx_i^*\})$ over $i\in \calI$. We have the following inequality bound.
		\begin{align*} 
			&\bE \calA(\calI,\calM\cup\{\vx_i^*\})\\
			&\quad=\sum_{i\in\calI}\frac{\calA(\{i\},\calM)}{\calA(\calI,\calM)}\calA(\calI,\calM\cup\{\vx_i^*\})\\
			&\quad=\sum_{i\in\calI}\frac{\calA(\{i\},\calM)}{\calA(\calI,\calM)}\sum_{i'\in\calI}\min(\calA(\{i'\},\calM),f_{i'}(\vx_i^*)-f_{i'}(\vx_{i'}^*))\\
			&\quad\stackrel{(a)}{\leq}
			\sum_{i\in\calI}\frac{\frac{1}{|\calI|}  \sum_{i''\in\calI}\left(  \frac{2L}{\mu} \calA(\{i''\},\calM)+L\|\vx_{i''}^*-\vx_i^*\|^2 \right)  }{\calA(\calI,\calM)}\sum_{i'\in\calI}\min(\calA(\{i'\},\calM), f_{i'}(\vx_i^*)-f_{i'}(\vx_{i'}^*))\\
			&\quad=\frac{2L}{\mu}\frac{1}{|\calI|}\sum_{i\in\calI}\sum_{i'\in\calI} \min(\calA(\{i'\},\calM),f_{i'}(\vx_i^*)-f_{i'}(\vx_{i'}^*))
			\\
			&\quad\quad\quad  +\frac{L }{\calA(\calI,\calM)|\calI|}
			\sum_{i\in\calI} \sum_{i''\in\calI}\| \vx_{i''}^*-\vx_i^* \|^2
			\sum_{i'\in\calI} \min(\calA(\{i'\},\calM),f_{i'}(\vx_i^*)-f_{i'}(\vx_{i'}^*))\\
			&\quad\leq \frac{2L}{\mu}\frac{1}{|\calI|}\sum_{i\in\calI}\sum_{i'\in\calI} \frac{L}{2}\|\vx_i^*-\vx_{i'}^*\|^2
			+\frac{L }{\calA(\calI,\calM)|\calI|}
			\sum_{i\in\calI} \sum_{i''\in\calI}\| \vx_{i''}^*-\vx_i^* \|^2
			\sum_{i'\in\calI} \calA(\{i'\},\calM)\\
			&\quad= \left(\frac{L^2}{\mu}+L\right)\frac{1}{|\calI|}\sum_{i\in\calI}\sum_{i'\in\calI} \|\vx_{i'}^*-\vx_i^*\|^2.
		\end{align*}
		Here, (a) holds when applying Lemma \ref{lem:diff-Di-Di'}.
	\end{proof}

	\begin{lemma}
		\label{lem:delta-A-bound}
		For any $A_l$ in the optimal partition $(A_1,A_2,\ldots,A_k)$, we have
		$$
		\Delta_{A_l}\leq \frac{4}{\mu}\calA(A_l,\calM_{\textup{OPT}}).
		$$
	\end{lemma}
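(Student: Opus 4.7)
The plan is to unwind both sides in terms of distances $\|\vx_i^*-\vz_l^*\|$ and then combine them by the $\mu$-strong convexity. On the $\calA$-side, the key observation is that by the defining property of the partition $(A_1,\dots,A_k)$, for $i\in A_l$ we have $\min_{\vz\in\calM_{\textup{OPT}}}(f_i(\vz)-f_i(\vx_i^*))=f_i(\vz_l^*)-f_i(\vx_i^*)$, so $\calA(A_l,\calM_{\textup{OPT}})=\sum_{i\in A_l}(f_i(\vz_l^*)-f_i(\vx_i^*))$. Applying $\mu$-strong convexity (Assumption~\ref{asp:mu-strong-convex}) at the minimizer $\vx_i^*$ of $f_i$ gives $f_i(\vz_l^*)-f_i(\vx_i^*)\geq\tfrac{\mu}{2}\|\vz_l^*-\vx_i^*\|^2$, hence
\begin{equation*}
\sum_{i\in A_l}\|\vx_i^*-\vz_l^*\|^2\leq\frac{2}{\mu}\calA(A_l,\calM_{\textup{OPT}}).
\end{equation*}

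On the $\Delta$-side, I would introduce the centroid $\bar\vx=\tfrac{1}{|A_l|}\sum_{i\in A_l}\vx_i^*$. A direct expansion of the pairwise sum yields the identity $\sum_{i,i'\in A_l}\|\vx_i^*-\vx_{i'}^*\|^2=2|A_l|\sum_{i\in A_l}\|\vx_i^*-\bar\vx\|^2$, so
\begin{equation*}
\Delta_{A_l}=2\sum_{i\in A_l}\|\vx_i^*-\bar\vx\|^2.
\end{equation*}
Then I would invoke the well-known variance-minimization property of the mean: for every fixed point $\vz\in\bR^d$, $\sum_{i\in A_l}\|\vx_i^*-\bar\vx\|^2\leq\sum_{i\in A_l}\|\vx_i^*-\vz\|^2$. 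Specializing to $\vz=\vz_l^*$ combines the two estimates and yields the constant $4/\mu$ directly.

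Chaining these gives $\Delta_{A_l}\leq 2\sum_{i\in A_l}\|\vx_i^*-\vz_l^*\|^2\leq\tfrac{4}{\mu}\calA(A_l,\calM_{\textup{OPT}})$, as claimed. The only subtle point is choosing the correct comparison point on the $\Delta$-side: using the triangle inequality $\|\vx_i^*-\vx_{i'}^*\|^2\leq 2\|\vx_i^*-\vz_l^*\|^2+2\|\vx_{i'}^*-\vz_l^*\|^2$ would only produce the weaker factor $8/\mu$, so routing through the centroid (which saves a factor of two) is the step that secures the stated tight constant. Everything else is a direct algebraic rewriting plus a single application of $\mu$-strong convexity.
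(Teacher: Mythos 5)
Your proof is correct and uses essentially the same argument as the paper: both pass through the centroid $\bar\vx$ of $\{\vx_i^*:i\in A_l\}$, use the identity $\Delta_{A_l}=2\sum_{i\in A_l}\|\vx_i^*-\bar\vx\|^2$ together with the variance-minimization property of the mean, and then apply $\mu$-strong convexity of $f_i$ at its minimizer $\vx_i^*$. The only cosmetic difference is that you specialize directly to $\vz=\vz_l^*$, whereas the paper keeps the minimum over $\vz$ one step longer before identifying it with $\calA(A_l,\calM_{\textup{OPT}})$.
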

	\begin{proof}
		We let $\bar \vy_l=\frac{1}{|A_l|}\sum_{i\in A_l}\vx_i^*$ be the geometric center of optimal $f_i$ solutions of index set $A_l$.
		\begin{align*}
			\Delta_{A_l}&=\frac{1}{|A_l|}\sum_{i\in A_l}\sum_{i'\in A_l}\|\vx_i^*-\vx_{i'}^*\|^2\\
			&=\frac{1}{|A_l|}\sum_{i\in A_l}\sum_{i'\in A_l}\|\vx_i^*-\bar \vy_l +\bar \vy_l - \vx_{i'}^*\|^2\\
			&=\frac{1}{|A_l|}\sum_{i\in A_l}\sum_{i'\in A_l}\left( \|\vx_i^*-\bar \vy_l\|^2 +\|\bar \vy_l - \vx_{i'}^*\|^2 \right)\\
			&=2\sum_{i\in A_l}\|\vx_i^*-\bar \vy_l\|^2\\
			&=2\min_\vz\sum_{i\in A_l}\|\vx_i^*- \vz\|^2\\
			&\leq \frac{4}{\mu} \min_\vz\sum_{i\in A_l}\left(f_i(\vz)-f_i(\vx_i^*)\right)\\
			&=\frac{4}{\mu} \min_\vz\calA(A_l,\{\vz\})\\
			&=\frac{4}{\mu}\calA(A_l.\calM_{\textup{OPT}}).
		\end{align*}
	\end{proof}

	\begin{proposition}
		\label{prop:ED-bound-D-opt}
		Let $\calI$ be an index set, and $\calM$ be a finite point set. Let $\vz^*$ be a minimizer of the objective function
		$
		\sum_{i\in \calI}\left( f_i(\vz)-f_i(\vx_i^*)\right) $. Suppose that
		$\calA(\calI,\calM)>0.$
		If we sample an index $i\in\calI$ with probability $\frac{\calA(\{i\},\calM)}{\calA(\calI,\calM)}$, then we have the following inequality:
		\begin{equation}
			\label{eq:EA-A-opt-bound}
			\bE \calA(\calI,\calM\cup\{\vx_i^*\})\leq 4 \left(\frac{L^2}{\mu^2}+\frac{L}{\mu}\right) \calA(A_l.\calM_{\textup{OPT}}).
		\end{equation}
		
	\end{proposition}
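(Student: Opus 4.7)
The plan is to chain Lemma~\ref{lem:EA-delta-bound} with a mild strengthening of Lemma~\ref{lem:delta-A-bound}, so that the auxiliary quantity $\Delta_\calI$ is eliminated in favor of $\calA(\calI,\{\vz^*\})$. First I would invoke Lemma~\ref{lem:EA-delta-bound}, whose sampling rule $\calA(\{i\},\calM)/\calA(\calI,\calM)$ matches the one in the proposition, to obtain
\begin{equation*}
\bE\,\calA(\calI,\calM\cup\{\vx_i^*\})\leq \left(\frac{L^2}{\mu}+L\right)\Delta_\calI.
\end{equation*}
This reduces the task to showing that $\Delta_\calI$ is controlled by a constant multiple of $\calA(\calI,\{\vz^*\})$.

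Second, I would revisit the proof of Lemma~\ref{lem:delta-A-bound}. Although stated for the partition block $A_l$, its proof uses only the centroid-variance identity $\sum_{i\in\calI}\|\vx_i^*-\bar\vy\|^2=\min_{\vz}\sum_{i\in\calI}\|\vx_i^*-\vz\|^2$ (with $\bar\vy$ the arithmetic mean of $\{\vx_i^*\}_{i\in\calI}$), followed by the pointwise strong-convexity bound $\|\vz-\vx_i^*\|^2\leq \frac{2}{\mu}(f_i(\vz)-f_i(\vx_i^*))$. Those two steps carry through for any nonempty index set and yield
\begin{equation*}
\Delta_\calI\leq\frac{4}{\mu}\min_{\vz}\calA(\calI,\{\vz\})=\frac{4}{\mu}\calA(\calI,\{\vz^*\}),
\end{equation*}
where the last equality uses that $\vz^*$ is, by hypothesis, a minimizer of $\sum_{i\in\calI}(f_i(\vz)-f_i(\vx_i^*))=\calA(\calI,\{\vz\})$. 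Substituting back into the previous display produces the announced factor $4(L^2/\mu^2+L/\mu)\,\calA(\calI,\{\vz^*\})$.

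Finally, to reconcile this with the right-hand side displayed in the proposition, which is phrased in terms of $A_l$ and $\calM_{\textup{OPT}}$, one specializes to $\calI=A_l$: by the defining property of the partition $(A_1,\ldots,A_k)$, every $i\in A_l$ attains its minimum over $\calM_{\textup{OPT}}$ at $\vz_l^*$, so $\calA(A_l,\calM_{\textup{OPT}})=\calA(A_l,\{\vz_l^*\})$ and taking $\vz^*=\vz_l^*$ makes the two expressions coincide. I do not foresee any serious obstacle; the substantive work — the swap inequality of Lemma~\ref{lem:diff-Di-Di'} used inside Lemma~\ref{lem:EA-delta-bound}, and the centroid computation inside Lemma~\ref{lem:delta-A-bound} — has already been done. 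This proposition is best viewed as the packaged form that will feed an inductive estimate on $\bE\,\calA([N],\calM_{\textup{init}})$ in the proof of Theorem~\ref{thm:init-upper-bound}, and the only subtlety worth flagging in the write-up is the remark that Lemma~\ref{lem:delta-A-bound} actually holds for arbitrary $\calI$, not just the blocks $A_l$.
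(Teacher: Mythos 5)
Your proof is correct and takes essentially the same route as the paper, which simply combines Lemma~\ref{lem:EA-delta-bound} and Lemma~\ref{lem:delta-A-bound}. You additionally spell out the mild generalization of Lemma~\ref{lem:delta-A-bound} to arbitrary $\calI$ and the reconciliation of the notational mismatch ($\calI$ on the left vs.\ $A_l$ on the right), both of which the paper leaves implicit.
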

	\begin{proof}
		The deduction of \eqref{eq:EA-A-opt-bound} is a direct combination of Lemma \ref{lem:EA-delta-bound} and Lemma \ref{lem:delta-A-bound}. 
	\end{proof}
	
	Next we prove that the $\frac{L^2}{\mu^2}$ bound in \eqref{eq:EA-A-opt-bound} is tight. 
	\begin{proposition}
		Fix the dimension $d\geq 1$, there exists an integer $N$. We can construct $N$ $\mu$-strongly convex and $L$-smooth sub-functions $f_1,f_2,\ldots,f_N$, and a finite set $\calM\subseteq\bR^d$. We let $\{f_i\}_{i=1}^N$ be the $N$ sub-functions of the sum-of-minimum optimization problem \eqref{eq:sum-of-min-opt}. When we sample an index $i\in[N]$ with probability $\frac{\calA(\{i\},\calM)}{\calA([N],\calM)}$, we have 
		$$
		\bE \calA([N],\calM\cup\{\vx_i^*\})\geq \frac{L^2}{\mu^2}\calA(A_l,\calM_{\textup{OPT}}).
		$$
	\end{proposition}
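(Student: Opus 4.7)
The plan is to construct an explicit one-dimensional instance that simultaneously saturates the two inequalities underlying the upper bound of Proposition \ref{prop:ED-bound-D-opt} (Lemmas \ref{lem:EA-delta-bound} and \ref{lem:delta-A-bound}). The mechanism is to exploit the full curvature range $[\mu, L]$ permitted by Assumptions \ref{asp:L-smooth}--\ref{asp:mu-strong-convex}: I would place low-curvature sub-functions far from $\calM$ and high-curvature sub-functions near $\calM$, calibrated so that both groups are sampled with non-negligible probability, yet neither sampled $\vx_i^*$ substantially reduces the overall cost.

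Concretely, I would work in $d = 1$ with $N = 2M$ for $M$ sufficiently large, and split $[N]$ into two clusters: $A_1 = \{1,\ldots,M\}$ with $f_i(\vx) = \tfrac{\mu}{2}(\vx - a_i)^2$ where $\{a_i\}$ is evenly spaced in a window of width $\epsilon$ around the origin, and $A_2 = \{M+1,\ldots,2M\}$ with $f_i(\vx) = \tfrac{L}{2}(\vx - b_i)^2$ where $\{b_i\}$ is evenly spaced in a window of width $\epsilon_b := \epsilon\sqrt{L/\mu}$ centered at $D := \epsilon L/\mu$. For $k = 2$ the unique optimal partition is $(A_1, A_2)$ and $\calM_{\textup{OPT}} = \{\vz_1^*, \vz_2^*\}$ is the pair of cluster centroids. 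I then take $\calM = \{\vz_2^*\}$, so that cluster $A_2$ is already well-served while cluster $A_1$ sits at distance roughly $D$ from $\calM$. A direct computation using the variance of uniform spacings gives both $\calA(A_1, \calM)$ and $\calA(A_2, \calM)$ of order $ML^2\epsilon^2/\mu$, so each cluster is sampled with probability $\Omega(1)$.

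The heart of the proof is then a two-case analysis of the expected residual. When $i \in A_1$ is sampled, $\vx_i^* = a_i$ sits near the origin, and averaging over $i$ the $A_1$ contribution collapses to the expected pairwise cost of order $M\mu\epsilon^2$, while the $A_2$ contribution remains of order $ML^2\epsilon^2/\mu$ since the new point is far from cluster $A_2$. When $i \in A_2$ is sampled, $\vx_i^* = b_i$ sits near $\calM$ and only marginally improves the $A_2$ term, while $\calA(A_1, \calM \cup \{b_i\}) \approx \calA(A_1, \calM)$ is still of order $ML^2\epsilon^2/\mu$. In either branch the total residual is of order $ML^2\epsilon^2/\mu$, whereas $\calA(A_1, \calM_{\textup{OPT}})$ is of order $M\mu\epsilon^2$. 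Choosing $l = 1$ in the proposition yields the ratio $\bE \calA([N], \calM\cup\{\vx_i^*\})/\calA(A_1, \calM_{\textup{OPT}})$ of order $L^2/\mu^2$, and an explicit tally of the constants (the uniform-spacing variances $\epsilon^2/12$ and $\epsilon_b^2/12$, the sampling probabilities, and the case-wise contributions) drives the prefactor strictly above $1$.

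The main obstacle is precisely this bookkeeping of constants: ensuring the final coefficient in front of $L^2/\mu^2$ is at least $1$ rather than a smaller $O(1)$ multiple requires coordinating the $\Theta(1)$ factors from the sampling distribution with those from the per-branch residual computations, and it is here that the specific balance $\epsilon_b = \epsilon\sqrt{L/\mu}$ and $D = \epsilon L/\mu$ pays off. The extension to general $d \geq 1$ is immediate: embed the above one-dimensional construction along the first coordinate and add a $\tfrac{\mu}{2}\|\cdot\|^2$-type regularizer in the remaining coordinates to preserve both $L$-smoothness and $\mu$-strong convexity.
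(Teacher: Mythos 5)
Your construction is genuinely different from the paper's, and while the arithmetic in your sketch is plausible, it satisfies the letter of the proposition by evading its substance. The paper's example uses a \emph{single} cluster, i.e.\ $A_l = [N]$, so the denominator $\calA(A_l,\calM_{\textup{OPT}})$ on the right-hand side is the \emph{same} quantity $\calA([N],\calM_{\textup{OPT}})$ that the preceding Proposition~\ref{prop:ED-bound-D-opt} (via Lemmas~\ref{lem:EA-delta-bound} and~\ref{lem:delta-A-bound}) upper-bounds with a factor of $4(L^2/\mu^2 + L/\mu)$. The lower bound then shows that the factor $L^2/\mu^2$ cannot be removed. The paper gets there by placing $\calM$ at a far-away point $(0,\xi)$ that is \emph{orthogonal} to the line through all the minimizers and by using anisotropic quadratics: the huge $\|\xi\|$ makes the sampling probability governed by the curvature in the orthogonal direction, while the post-sampling residual is governed by the curvature along the line of the minimizers, and this decoupling is precisely what produces the $L^2/\mu^2$ ratio. (In $d=1$, where no orthogonal direction exists, they substitute a piecewise quadratic whose curvature changes across the kink.)

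Your two-cluster construction decouples the two roles differently, but at a conceptual cost. In your instance the dominant contribution to $\bE\calA([N],\calM\cup\{\vx_i^*\})$ is of order $\calA(A_2,\calM) = \calA(A_2,\calM_{\textup{OPT}})$, i.e.\ it is essentially the \emph{intrinsic within-cluster cost of $A_2$}, which no single sampled center can remove and which has nothing to do with any inefficiency of the sampling rule. Meanwhile you compare only against $\calA(A_1,\calM_{\textup{OPT}})$, which you have deliberately made a factor $\mu^2/L^2$ smaller via the calibration $\epsilon_b = \epsilon\sqrt{L/\mu}$. The ratio $L^2/\mu^2$ is thus manufactured from $\calA(A_2,\calM_{\textup{OPT}})/\calA(A_1,\calM_{\textup{OPT}})$ rather than from any genuine loss incurred by the sampling scheme. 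Indeed, if you replace $\calA(A_1,\calM_{\textup{OPT}})$ by the natural quantity $\calA([N],\calM_{\textup{OPT}}) = \calA(A_1,\calM_{\textup{OPT}}) + \calA(A_2,\calM_{\textup{OPT}})$, your ratio collapses to $\Theta(1)$, and if one instead picks $l=2$ the inequality fails outright. So your construction does not serve as a tightness example for Proposition~\ref{prop:ED-bound-D-opt}, which is the role this proposition plays in the paper. To recover that content you would need a single-cluster instance, which forces you to confront the decoupling problem head-on—and that is exactly where the orthogonal/anisotropic device or the piecewise-quadratic device becomes necessary.
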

	\begin{proof}
		For the cases where the dimension $d\geq 2$, we construct the instance in a more concise way. We consider the following $n+1$ points, $\vx_i^*=(1,0,0,\ldots,0)\in\bR^d$, $i=1,2,\ldots,n$, $\vx_{n+1}^*=(-1,0,0,\ldots,0)\in\bR^d$. All the elements except the first one of $\vx_i^*$ are zero. We construct the following functions $f_i$ with minimizers $\vx_i^*$.
		\begin{equation}
			\label{eq:lb-d-dim}
			\begin{split}
				f_i(y_1,y_2,\ldots,y_d)=
				\frac{L}{2}\left(y_1-1\right)^2+\frac{\mu}{2}\sum_{j=2}^d y_j^2,\quad  i=1,2\ldots,n,\\
				f_i(y_1,y_2,\ldots,y_d)=\frac{\mu}{2}\left( y_1+1 \right)^2+ 
				\frac{L}{2} \sum_{j=2}^d y_j^2,\quad i=n+1.
			\end{split}
		\end{equation}
		We have $f_i^*:=f_i(\vx_i^*)=0$ for all $i\in[n+1]$. We construct the finite set $\calM$ in an orthogonal manner.
		We let $\calM=\{ (0,\xi)  \}$, $\xi\in \bR^{d-1}$ be a single point set. Besides, $\|\xi\|=m\gg 1$. The point $\pmb{\xi}=(0,\xi)$ in $\calM$ is orthogonal to all $\vx_i^*$'s. Consider the expectation over the newly sampled index $i$, we have
		\begin{multline*}
			\bE\sum_{i'=1}^{n+1}\min( f_{i'}(\pmb{\xi})-f_{i'}(\vx_{i'}^*),f_{i'}(\vx_i^*)-f_{i'}(\vx_{i'}^*) )\\ 
			= \frac{n(L+\mu m^2)}{n(L+\mu m^2)+(\mu+Lm^2)}2\mu+\frac{\mu+Lm^2}{n(L+\mu m^2)+(\mu+Lm^2)}2nL.
		\end{multline*}
		We set $m=\exp(n)$. As $n\rightarrow\infty$, we have
		\begin{align*}
			\lim_{n\rightarrow\infty} \bE\sum_{i'=1}^{n+1}\min( f_{i'}(\pmb{\xi})-f_{i'}(\vx_{i'}^*),f_{i'}(\vx_i^*)-f_{i'}(\vx_{i'}^*) )=2\mu+2\frac{L^2}{\mu}.
		\end{align*}
		
		In the meanwhile, we have 
		\begin{gather*}
			\vz^*:=\argmin_\vz \sum_{i'=1}^{n+1}( f_{i'}(\vz)-f_{i'}(\vx_{i'}^*) )
			=\frac{nL-\mu}{nL+\mu},\\
			\sum_{i'=1}^{n+1}( f_{i'}(\vz^*)-f_{i'}(\vx_{i'}^*) )=\frac{2\mu nL}{\mu+nL}\stackrel{n\rightarrow \infty}{\rightarrow}2\mu.
		\end{gather*}
		We have the following error rate:
		$$
		\lim_{n\rightarrow\infty} \frac{\bE\sum_{i'=1}^{n+1}\min( f_{i'}(\pmb{\xi})-f_{i'}(\vx_{i'}^*),f_{i'}(\vx_i^*)-f_{i'}(\vx_{i'}^*) )}{\sum_{i'=1}^{n+1}( f_{i'}(\vz^*)-f_{i'}(\vx_{i'}^*) )}=1+\frac{L^2}{\mu^2}.
		$$

		As for the 1D case, we consider the following $n+1$ points. We let $x_i^*=1, i=1,2,\ldots,n$, and $x_{n+1}^*=0$. We construct:
		\begin{gather*}
			f_i(x)=\left\{
			\begin{split}
				\frac{L}{2}(x-1)^2,\quad x\leq 1,\\
				\frac{\mu}{2}(x-1)^2,\quad x\geq 1,
			\end{split}
			\right.\quad i=1,2,\ldots,n.\\
			f_i(x)=\left\{
			\begin{split}
				&\frac{\mu}{2}x^2,\quad x\leq 1,\\
				&\frac{L}{2}(x-1)^2+\mu\left(x-\frac{1}{2}\right),\quad x\geq 1,
			\end{split}
			\quad i=n+1.
			\right.
		\end{gather*}
		Each $f_i^*$ has the minimizer $x_i^*$. Besides, $f_i^*:=f_i(x_i^*)=0$.
		We let $\calM=\{1+\frac{L}{\mu}\}$ be a single point set. Let $\xi=1+\frac{L}{\mu}$. We have
		\begin{align*}
			& f_i(x_{n+1}^*)-f_i^*=\frac{L}{2},\quad i=1,2,\ldots,n,\\
			& f_{n+1}(x_i^*)-f_{n+1}^*=\frac{\mu}{2},\quad i=1,2,\ldots,n, \\
			& f_i\left(1+\frac{L}{\mu}\right)-f_i^*=\frac{L^2}{2\mu},\quad i=1,2,\ldots,n,\\
			& f_{n+1}\left(1+\frac{L}{\mu}\right)-f_{n+1}^*=\frac{L^3+2\mu^2L+\mu^3}{2\mu^2}.
		\end{align*}
		We have the following expectation:
		\begin{align*}
			\bE\sum_{i'=1}^{n+1}\min( f_{i'}(\xi)-f_{i'}(x_{i'}^*),f_{i'}(x_i^*)-f_{i'}(x_{i'}^*) )&=
			\frac{n\frac{L^2}{2\mu}\cdot\frac{\mu}{2}+\frac{L^3+2\mu^2L+\mu^3}{2\mu^2}\cdot n\frac{L}{2}}{n\frac{L^2}{2\mu}+\frac{L^3+2\mu^2L+\mu^3}{2\mu^2}}\\
			&\stackrel{n\rightarrow\infty}{\rightarrow}\frac{3}{2}\mu+\frac{L^2}{2\mu}+\frac{\mu^2}{2L}.
		\end{align*}

		Besides, we have the minimizer $z^*=\frac{nL}{nL+\mu}$ of the objective function $ \sum_{i=1}^{n+1}( f_{i}(z)-f_{i}(x_{i}^*) )$. We have
		$$
		\sum_{i=1}^{n+1}( f_{i}(z^*)-f_{i}(x_{i}^*) )=\frac{nL\mu}{2(nL+\mu)}\stackrel{n\rightarrow\infty}{\rightarrow}\frac{\mu}{2}.
		$$
		We have the following asymptotic error bound:
		\begin{equation*}
			\lim_{n\rightarrow\infty}
			\frac{\bE\sum_{i=1}^{n+1}\min( f_{i}(\xi)-f_{i}(x_{i}^*),f_{i}(x_i^*)-f_{i}(x_{i}^*) )}{\sum_{i=1}^{n+1}( f_{i}(z^*)-f_{i}(x_{i}^*) )}=3+\frac{L^2}{\mu^2}+\frac{\mu}{L}.
		\end{equation*}

	\end{proof}
	We remark that the orthogonal technique used in the construction of \eqref{eq:lb-d-dim} can be applied in other lower bound constructions in the proofs of the initialization Algorithms \ref{alg:init-A} and \ref{alg:init-D} as well.

	\begin{lemma}
		\label{lem:main}
		We consider the sum-of-minimum optimization \eqref{eq:sum-of-min-opt}. Suppose that $S^*$ is $k$-separate. Suppose that we have fixed indices $i_1,i_2,\ldots,i_j$. We define the finite set
		$
		\calM_j=\{ \vx_{i_1}^*, \vx_{i_2}^*,\ldots,\vx_{i_j}^* \}
		$.
		We define the index sets 
		$
		L_j=\{ l:A_l \cap \{i_1,i_2,\ldots,i_j\}\not= \emptyset \}, L_j^c=\{ l:A_l \cap \{i_1,i_2,\ldots,i_j\}= \emptyset \},
		\calI_j=\cup_{l\in L_j} A_l,\calI_j^c=\cup_{l\in L_j^c} A_l
		$. Let $u=|L_j^c|$. We sample $t\leq u$ new indices. We let
		$
		\calM_{j,s}^+=\{\vx_{i_1}^*,\vx_{i_2}^*,\ldots,\vx_{i_j}^*,\vx_{i_{j+1}}^*,\ldots,\vx_{i_{j+s}}^*\}
		$ for $0\leq s\leq t$.
		In each round of sampling, the probability of $i_{j+s}, s>0$, being sampled as $i$ is $\frac{\calA(\{i\},\calM_{j,s-1}^+)}{\calA([N],\calM_{j,s-1}^+)}$. Then we have the following bound,
		\begin{equation}
			\label{eq:A-bound}
			\bE ~\calA([N],\calM_{j,t}^+)\leq \left( \calA(\calI_j,\calM_j) + 4\left( \frac{L^2}{\mu^2}+\frac{L}{\mu} \right) \calA(\calI_j^c,\calM_{\textup{OPT}}) \right)(1+H_{t})+\frac{u-t}{u}\calA(\calI_j^c,\calM_j).
		\end{equation}
		Here, $H_t=1+\frac{1}{2}+\cdots+\frac{1}{t}$ is the harmonic sum.
	\end{lemma}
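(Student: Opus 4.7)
My plan is to proceed by induction on $t$. The base case $t=0$ is immediate: the harmonic sum $H_0 = 0$ and the coefficient $(u-t)/u = 1$, so the right-hand side dominates $\calA(\calI_j,\calM_j) + \calA(\calI_j^c,\calM_j) = \calA([N],\calM_j)$, which equals the left-hand side when no additional samples are drawn.

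For the inductive step with $t \ge 1$, I condition on the first new sample $i_{j+1}$. Writing $a = \calA(\calI_j,\calM_j)$, $b = \calA(\calI_j^c,\calM_j) = \sum_{l \in L_j^c} b_l$ with $b_l = \calA(A_l,\calM_j)$, and $\alpha = 4(L^2/\mu^2 + L/\mu)$, the probability that $i_{j+1}$ belongs to a covered cluster is $a/(a+b)$, while for each uncovered $A_l$ it is $b_l/(a+b)$. In each branch I apply the inductive hypothesis with parameters $(j+1, t-1)$, using the monotonicity $\calA(\cdot,\calM_{j+1}) \le \calA(\cdot,\calM_j)$. The covered branch leaves $L_j^c$ unchanged, while an uncovered branch with index in $A_l$ updates $L_{j+1}^c = L_j^c \setminus \{l\}$, so $|L_{j+1}^c| = u-1$. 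In the uncovered branch I decompose $\calA(\calI_{j+1},\calM_{j+1}) \le a + \calA(A_l,\calM_{j+1})$ and invoke Proposition~\ref{prop:ED-bound-D-opt} with $\calI = A_l$ and $\calM = \calM_j$ to obtain $\bE \calA(A_l,\calM_{j+1}) \le \alpha\,\calA(A_l,\calM_{\textup{OPT}})$. The extra $\alpha\,\calA(A_l,\calM_{\textup{OPT}})$ then cancels exactly against the loss of the same quantity when $\calI_j^c$ is replaced by $\calI_{j+1}^c = \calI_j^c \setminus A_l$ inside the leading factor $(a + \alpha\,\calA(\calI_j^c,\calM_{\textup{OPT}}))$; this clean cancellation is the reason Proposition~\ref{prop:ED-bound-D-opt} is the right tool to feed in.

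Combining the weighted contributions, the leading factor $(a + \alpha\,\calA(\calI_j^c,\calM_{\textup{OPT}}))(1+H_{t-1})$ appears with total coefficient $p_A + \sum_l p_l = 1$, and after upper bounding $\calA(\calI_j^c,\calM_{j+1}) \le b$ in the covered branch and $\calA(\calI_{j+1}^c,\calM_{j+1}) \le b - b_l$ in the uncovered branches, the tail contributions sum to
\[
\frac{ab(u-t+1)}{u(a+b)} + \frac{u-t}{(u-1)(a+b)} \sum_{l \in L_j^c} b_l(b - b_l).
\]
I then apply the power-mean inequality $\sum_l b_l^2 \ge b^2/u$ to get $\sum_l b_l(b-b_l) \le b^2(u-1)/u$, after which the tail collapses algebraically to $\frac{b(u-t)}{u} + \frac{ab}{u(a+b)}$.

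The main obstacle and crux of the argument is the final algebraic matching to the target expression. The target tail is $\frac{b(u-t)}{u}$ and the target leading factor upgrades from $(1+H_{t-1})$ to $(1+H_t) = (1+H_{t-1}) + 1/t$, so it suffices to absorb the residual $\frac{ab}{u(a+b)}$ into the $1/t$ increment by proving
\[
\frac{ab}{u(a+b)} \le \frac{a + \alpha\,\calA(\calI_j^c,\calM_{\textup{OPT}})}{t}.
\]
This follows from the elementary chain $\frac{ab}{u(a+b)} \le \frac{a}{u} \le \frac{a}{t} \le \frac{a + \alpha\,\calA(\calI_j^c,\calM_{\textup{OPT}})}{t}$, in which the hypothesis $t \le u$ is used essentially. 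The $k$-separation of $S^*$ enters throughout to guarantee $\calA([N],\calM_j) > 0$ at every step, so that all the conditional sampling distributions are well defined.
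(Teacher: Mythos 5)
Your proof is correct and follows essentially the same route as the paper's: a double induction in $(u,t)$ (stated as induction on $t$ after conditioning on the first new sample), the same use of Proposition~\ref{prop:ED-bound-D-opt} (equivalently Lemmas~\ref{lem:EA-delta-bound} and~\ref{lem:delta-A-bound}) to cancel the $\alpha\,\calA(A_l,\calM_{\textup{OPT}})$ terms, the same Cauchy--Schwarz/power-mean bound $\sum_l b_l^2\geq b^2/u$, and the identical final absorption $\frac{ab}{u(a+b)}\leq\frac{a}{u}\leq\frac{a}{t}\leq\frac{a+\alpha\,\calA(\calI_j^c,\calM_{\textup{OPT}})}{t}$. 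The only cosmetic difference is that the paper handles $(u,t)=(1,1)$ as an explicit base case to avoid the formal $0/0$ in the uncovered branch when $u-1=0$, whereas you leave that edge case implicit; it is harmless since the offending term is multiplied by $b-b_l=0$ there.
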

	
	\begin{proof}
		We prove by induction on $u=|L_j^c|$ and $t$. We introduce the notation
		$$
		\Phi_j(i)=\calA(\{i\},\calM_j)=\min_{\vz\in\calM_j}(f_i(\vz)-f_i(\vx_i^*)).
		$$
		We show that if \eqref{eq:A-bound} holds for the case $(u-1,t-1)$ and $(u,t-1)$, then it also holds for the case $(u,t)$.
		We first prove two base cases.
		
		\emph{Case 1: $t=0, u> 0$.}
		\begin{equation*}
			\bE~ \calA([N],\calM_{j,t}^+) = \calA([N],\calM_j)= \calA(\calI_j,\calM_j) + \calA(\calI_j^c,\calM_j).
		\end{equation*}
		
		\emph{Case 2: $t=1, u=1$.}
		With probability
		$\frac{\calA(\calI_j,\calM_j)}{\calA([N],\calM_j)}$,
		the newly sampled index $i_{j+1}$ will lie in $\calI_j$, and with probability $\frac{\calA(\calI_j^c,\calM_j)}{\calA([N],\calM_j)}$, it will lie in $\calI_j^c$. 
		We have bounds on the conditional expectation
		\begin{align*}
			\bE\left( \calA([N],\calM_{j,t}^+) \big| i_{j+1}\in \calI_j\right) &\leq \calA([N],\calM_j), \\
			\bE\left( \calA([N],\calM_{j,t}^+) \big| i_{j+1}\in \calI_j^c\right)&=\bE\left( \calA(\calI_j,\calM_{j,t}^+) \big| i_{j+1}\in \calI_j^c\right)+\bE\left( \calA(\calI_j^c,\calM_{j,t}^+) \big| i_{j+1}\in \calI_j^c\right)\\
			&\leq \calA(\calI_j,\calM_j)
			+\sum_{i'\in \calI_j^c}\frac{\Phi_j(i')}{\sum_{i\in \calI_j^c}\Phi_j(i)}  \calA(\calI_j^c,\calM_j\cup\{\vx_{i'}^*\})\\
			&\stackrel{(a)}{\leq} \calA(\calI_j,\calM_j)+\left( 
			\frac{L^2}{\mu}+L \right)\Delta_{\calI_j^c}\\
			&\stackrel{(b)}{\leq} \calA(\calI_j,\calM_j)+4\left( 
			\frac{L^2}{\mu^2}+\frac{L}{\mu} \right) \calA(\calI_j^c,\calM_{\textup{OPT}}) \\
		\end{align*}
		Here, (a) holds when applying Lemma \ref{lem:EA-delta-bound}. (b) holds since $\calI_j^c$ is identical to a certain $A_l$ as $u=1$ and we apply Lemma \ref{lem:delta-A-bound}. Overall, we have the bound:
		\begin{align*}
			\bE \calA([N],\calM_{j,t}^+)&=\frac{\calA(\calI_j,\calM_j)}{\calA([N],\calM_j)} \bE\left( \calA([N],\calM_{j,t}^+) \big| i_{j+1}\in \calI_j \right)
			\\
			&\qquad + \frac{\calA(\calI_j^c,\calM_j)}{\calA([N],\calM_j)} \bE\left( \calA([N],\calM_{j,t}^+) \big| i_{j+1}\in \calI_j^c \right)\\
			&\leq \calA(\calI_j,\calM_j)+4\left( 
			\frac{L^2}{\mu^2}+\frac{L}{\mu} \right) \calA(\calI_j^c,\calM_{\textup{OPT}})+\calA(\calI_j,\calM_j)\\
			&= 2 \calA(\calI_j,\calM_j)+4\left( 
			\frac{L^2}{\mu^2}+\frac{L}{\mu} \right) \calA(\calI_j^c,\calM_{\textup{OPT}})
		\end{align*}
		
		Next, we prove that the case $(u,t)$ holds when the inequality holds for cases $(u-1,t)$ and $(u-1,t-1)$. With probability
		$\frac{\calA(\calI_j,\calM_j)}{\calA([N],\calM_j)}$,
		the first sampled index $i_{j+1}$ will lie in $\calI_j$, and with probability $\frac{\calA(\calI_j^c,\calM_j)}{\calA([N],\calM_j)}$, it will lie in $\calI_j^c$.
		Let
		$$
		\alpha = 4\left( 
		\frac{L^2}{\mu^2}+\frac{L}{\mu} \right).
		$$
		We divide into two cases and compute the corresponding  conditional expectations. For the case where $i_{j+1}$ lies in $\calI_j$, we have the following bound on the conditional expectation.
		\begin{align*}
			&\bE \left( \calA( [N],\calM_{j,t}^+)\,\big|\,i_{j+1}\in \calI_j \right)\\
			&\quad\leq\bE\bigg(\left(
			\calA(\calI_j,\calM_j\cup \{ \vx_{i_{j+1}}^*\})+\alpha \calA(\calI_j^c,\calM_{\textup{OPT}})\right)(1+H_{t-1}) \\
			&\qquad\quad  + \frac{u-t+1}{u} \calA(\calI_j^c,\calM_j\cup \{ \vx_{i_{j+1}}^*\})\big | i_{j+1}\in\calI_j\bigg)\\
			&\quad \leq \left( 
			\calA(\calI_j,\calM_j)+\alpha \calA(\calI_j^c,\calM_{\textup{OPT}})\right)(1+H_{t-1})+  \frac{u-t+1}{u} \calA(\calI_j^c,\calM_j).
		\end{align*}
		For the case where $i_{j+1}$ lies in $\calI_j^c$, we have the following inequality:
		\begin{align*}
			&\bE\, \left( \calA([N],\calM_{j,t}^+ )\,\big| \, i_{j+1}\in\calI_j^c \right)\\
			&\quad\leq \sum_{l\in L_j^c}\frac{\sum_{i\in A_l}\Phi_j(i)}{\sum_{i'\in \calI_j^c}\Phi_j(i')}\bigg[
			\left( 
			\calA(\calI_j\cup A_l,\calM_j\cup\{ \vx_i^*\})+\alpha \calA(\calI_j^c\backslash A_l,\calM_{\textup{OPT}})
			\right)(1+H_{t-1}) \\
			&\quad \quad\quad  +\frac{u-t}{u-1}\calA(\calI_j^c\backslash A_l,\calM_j\cup\{\vx_i^*\})
			\bigg]\\
			&\quad \leq\sum_{l\in L_j^c}\frac{\sum_{i\in A_l}\Phi_j(i)}{\sum_{i'\in \calI_j^c}\Phi_j(i')}
			\bigg[
			\big(
			\calA(\calI_j,\calM_j)+ \calA(A_l,\calM_j\cup \{\vx_i^*\})\\
			&\quad\quad \quad +\alpha(\calA(\calI_j^c,\calM_{\textup{OPT}})- \calA(A_l,\calM_{\textup{OPT}}))
			\big)(1+H_{t-1}) +\frac{u-t}{u-1}\left( \calA(\calI_j^c,\calM_j)-\calA(A_l,\calM_j) \right)
			\bigg]\\
			&\quad \stackrel{(a)}{\leq} \left( \calA(\calI_j,\calM_j)+\alpha \calA(\calI_j^c,\calM_{\textup{OPT}}) \right)(1+H_{t-1})+\frac{u-t}{u-1}\left( \calA(\calI_j^c,\calM_j) -\sum_{l\in L_j^c}\frac{\calA(A_l,\calM_j)^2}{\calA(\calI_j^c,\calM_j)} \right)\\
			&\quad \stackrel{(b)}{\leq} 
			\left( \calA(\calI_j,\calM_j)+\alpha\calA(\calI_j^c,\calM_{\textup{OPT}}) \right)(1+H_{t-1})+\frac{u-t}{u-1}\left( \calA(\calI_j^c,\calM_j) -\frac{1}{u}\calA(\calI_j^c,\calM_j) \right)\\
			&\quad =\left( \calA(\calI_j,\calM_j)+\alpha \calA(\calI_j^c,\calM_{\textup{OPT}}) \right)(1+H_{t-1})+\frac{u-t}{u} \calA(\calI_j^c,\calM_j).
		\end{align*}
		Here, (a) holds when applying Lemma \ref{lem:EA-delta-bound} and Lemma \ref{lem:delta-A-bound}. (b) holds as
		\begin{align*}
			\sum_{l\in L_j^c}\calA(A_l,\calM_j)^2\geq \frac{1}{u}\left(\sum_{l\in L_j^c}\calA(A_l,\calM_j)\right)^2=\frac{1}{u}\calA(\calI_j^c,\calM_j)^2.
		\end{align*}
		Overall, we have the bound:
		\begin{align*}
			&\bE \calA([N],\calM_{j,t}^+)\\
			=&\frac{\calA(\calI_j,\calM_j)}{\calA([N],\calM_j)} \bE\left( \calA([N],\calM_{j,t}^+) \big| i_{j+1}\in \calI_j \right)
			+
			\frac{\calA(\calI_j^c,\calM_j)}{\calA([N],\calM_j)} \bE\left( \calA([N],\calM_{j,t}^+) \big| i_{j+1}\in \calI_j^c \right)\\
			\leq& \left( \calA(\calI_j,\calM_j)+ \alpha \calA(\calI_j^c,\calM_{\textup{OPT}})  \right)(1+H_{t-1})+\frac{u-t}{u}\calA(\calI_j^c,\calM_j)+\frac{1}{u}\frac{\calA(\calI_j^c,\calM_j)\calA(\calI_j,\calM_j)}{\calA([N],\calM_j)}\\
			\stackrel{(a)}{\leq}& \left( \calA(\calI_j,\calM_j)+ \alpha \calA(\calI_j^c,\calM_{\textup{OPT}})  \right)(1+H_{t})+\frac{u-t}{u}\calA(\calI_j^c,\calM_j).
		\end{align*}
		Here, (a) holds since $u\geq t$ and
		$$
		\frac{\calA(\calI_j^c,\calM_j)\calA(\calI_j,\calM_j)}{\calA([N],\calM_j)}\leq \calA(\calI_j,\calM_j).
		$$
		The proof concludes.
	\end{proof}

	\begin{theorem}[Restatement of Theorem \ref{thm:init-upper-bound}]
		\label{thm:init-bound}
		Suppose that the solution set $S^*$ is $k$-separate. Let 
		$$
		\calM_{\textup{init}}=\{ \vx_{i_1}^*,\vx_{i_2}^*,\ldots, \vx_{i_k}^* \}
		$$
		be the initial points sampled by the random initialization Algorithm \ref{alg:init-A}.
		We have the following bound:
		\begin{equation}
			\label{eq:init-bound}
			\bE ~\calA([N],\calM_{\textup{init}})
			\leq  4(2+\ln k) \left( \frac{L^2}{\mu^2}+\frac{L}{\mu} \right) \calA([N],\calM_{\textup{OPT}}).
		\end{equation}
	\end{theorem}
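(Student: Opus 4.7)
The plan is to harness Lemma \ref{lem:main} once the first uniform seed has been placed, so that essentially all of the work has already been done by the induction. The global structure mirrors the Arthur--Vassilvitskii argument: treat the uniform first step and the $k-1$ subsequent probability-weighted steps separately, then combine.

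First I would write $\bE\calA([N],\calM_{\textup{init}})=\sum_{l=1}^k\mathbb{P}(i_1\in A_l)\,\bE[\calA([N],\calM_{\textup{init}})\mid i_1\in A_l]$. Conditional on $i_1\in A_l$, the index $i_1$ is uniform on $A_l$, so Lemma \ref{lem:EA-i-uniform} gives $\bE[\calA(A_l,\{\vx_{i_1}^*\})\mid i_1\in A_l]\leq\tfrac{L}{2}\Delta_{A_l}$, and then Lemma \ref{lem:delta-A-bound} upgrades this to
\[
\bE[\calA(A_l,\{\vx_{i_1}^*\})\mid i_1\in A_l]\leq \tfrac{2L}{\mu}\,\calA(A_l,\calM_{\textup{OPT}}).
\]

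Next I would invoke Lemma \ref{lem:main} with $j=1$, $\calM_1=\{\vx_{i_1}^*\}$, $\calI_1=A_l$, $\calI_1^c=\bigcup_{l'\neq l}A_{l'}$, and $u=t=k-1$. The key is the choice $t=u$, which makes the residual term $\frac{u-t}{u}\calA(\calI_j^c,\calM_j)$ vanish. This yields, after taking a further expectation over $i_1\in A_l$,
\[
\bE[\calA([N],\calM_{\textup{init}})\mid i_1\in A_l]\leq (1+H_{k-1})\Bigl[\tfrac{2L}{\mu}\calA(A_l,\calM_{\textup{OPT}})+4\bigl(\tfrac{L^2}{\mu^2}+\tfrac{L}{\mu}\bigr)\bigl(\calA([N],\calM_{\textup{OPT}})-\calA(A_l,\calM_{\textup{OPT}})\bigr)\Bigr].
\]

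To finish, I observe the trivial inequality $\tfrac{2L}{\mu}\leq 4\bigl(\tfrac{L^2}{\mu^2}+\tfrac{L}{\mu}\bigr)$, which collapses the bracket to $4\bigl(\tfrac{L^2}{\mu^2}+\tfrac{L}{\mu}\bigr)\calA([N],\calM_{\textup{OPT}})$, uniformly in $l$. Averaging over $l$ with weights $|A_l|/N$ is then vacuous, and the harmonic estimate $1+H_{k-1}\leq 2+\ln k$ delivers \eqref{eq:init-bound}.

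The hardest conceptual step is entirely packaged inside Lemma \ref{lem:main}; at this level the only things to watch are (i) choosing $t=u=k-1$ so the tail term disappears, (ii) correctly treating the first (uniform) sample as a special case where Lemma \ref{lem:EA-delta-bound} does not apply and must be replaced by Lemma \ref{lem:EA-i-uniform}, and (iii) verifying that $\tfrac{2L}{\mu}\leq 4(\tfrac{L^2}{\mu^2}+\tfrac{L}{\mu})$ so the ``first-cluster'' and ``other-cluster'' contributions can be merged into the same constant without loss.
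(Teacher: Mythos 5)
Your proposal is correct and follows essentially the same route as the paper's proof: both condition on the first (uniform) sample, invoke Lemma \ref{lem:main} with $u=t=k-1$ so the residual term vanishes, control the first-cluster contribution via Lemma \ref{lem:EA-i-uniform} and Lemma \ref{lem:delta-A-bound}, merge constants using $\tfrac{2L}{\mu}\leq 4(\tfrac{L^2}{\mu^2}+\tfrac{L}{\mu})$, and finish with $1+H_{k-1}\leq 2+\ln k$. The only cosmetic difference is that you group the outer average by cluster $A_l$ and observe the resulting bound is uniform in $l$, whereas the paper sums over all $i_1\in[N]$ and simplifies algebraically afterward.
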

	\begin{proof}
		We start with a fixed index $i_1$, let $\calM_1=\{ \vx_{i_1}^* \}$. Suppose $\vx_{i_1}\in A_l$. Then we use Lemma \ref{lem:main} with $u=k-1, t=k-1$. Let
		$$
		\alpha = 4\left( 
		\frac{L^2}{\mu^2}+\frac{L}{\mu} \right).
		$$
		We have
		\begin{align*}
			\bE ~\calA([N],\calM_{1,k-1}^+)\leq 
			\left( \calA(A_l,\calM_1)+\alpha \calA([N]\backslash A_l,\calM_{\textup{OPT}})  \right)(1+H_{k-1})
		\end{align*}
		The term $\bE \calA([N],\calM_{1,k-1}^+)$ can be regarded as the conditional expectation of $\calA([N],\calM_{\textup{init}})$ given $i_1$
		$$
		\bE ~\calA([N],\calM_{1,k-1}^+)=\bE\left( \calA([N],\calM_{\textup{init}})~|~i_1\right).
		$$
		According to Algorithm \ref{alg:init-A}, the first index $i_1$ is uniformly random in $[N]$. We take the expectation over $i_1$ and get
		\begin{align*}
			&\bE ~\calA([N],\calM_{\textup{init}}) \\
			\leq & \frac{1}{N}\sum_{l\in[k]}\sum_{i\in A_l}
			\left( \calA(A_l,\{\vx_i^*\}) + \alpha\calA([N]\backslash A_l,\calM_{\textup{OPT}}) \right)(1+H_{k-1})\\
			=&\left( \frac{1}{N}\sum_{l\in[k]}\sum_{i\in A_l} \calA(A_l,\{\vx_i^*\})+\alpha\left(\calA([N],\calM_{\textup{OPT}})-\frac{1}{N}\sum_{l\in[k]}|A_l|\calA(A_l,\calM_{\textup{OPT}})\right)  \right)(1+H_{k-1})\\
			\stackrel{(a)}{\leq}& \left(\frac{1}{N}\sum_{l\in[k]}|A_l|\frac{L}{2}\Delta_{A_l}+ 
			\alpha \left(\calA([N],\calM_{\textup{OPT}})-\frac{1}{N}\sum_{l\in[k]}|A_l|\calA(A_l,\calM_{\textup{OPT}})\right)  \right)(1+H_{k-1})\\
			\stackrel{(b)}{\leq}&\left(\frac{1}{N}\sum_{l\in[k]}|A_l|\frac{2L}{\mu}\calA(A_l,\calM_{\textup{OPT}})+ 
			\alpha \left(\calA([N],\calM_{\textup{OPT}})-\frac{1}{N}\sum_{l\in[k]}|A_l|\calA(A_l,\calM_{\textup{OPT}})\right)  \right)\\
			&\qquad\cdot (1+H_{k-1})\\
			\leq& \alpha\calA([N],\calM_{\textup{OPT}})(1+H_{k-1})\\
			\leq& 4(2+\ln k) \left( \frac{L^2}{\mu^2}+\frac{L}{\mu} \right) \calA([N],\calM_{\textup{OPT}}).
		\end{align*}
		Here, (a) holds when applying Lemma \ref{lem:EA-i-uniform}. (b) holds as a result of Lemma \ref{lem:delta-A-bound}.

	\end{proof}

	When we take 
	$$
	f_i(\vx)=\frac{1}{2}\|\vx-\vx_i^*\|^2,
	$$
	the optimization problem \eqref{eq:sum-of-min-opt} reduces to the k-means problem, and Algorithm \ref{alg:init-A} reduces to the k-means++ algorithm. Therefore, according to \cite{arthur2007k}, the bound given in Theorem \ref{thm:init-bound} is \textbf{tight} in $\ln k$ up to a constant. Next, we give a more detailed lower bound considering the conditioning number $\frac{L}{\mu}$.

	\begin{theorem}[Restatement of Theorem \ref{thm:init-lower-bound}]
		Given a fixed cluster number $k>0$, there exists $N>0$. We can construct $N$ $\mu$-strongly convex and $L$-smooth sub-functions $\{f_i\}_{i=1}^N$, whose minimizer set $S^*$ is $k$-separate. Besides, the sum-of-min objective function $F$ satisfies that $F^*>f^*$, so that $\calA([N],\calM_{OPT})>0$. When we apply Algorithm \ref{alg:init-A} to sample the initial centers $\calM_{\textup{init}}$, we have the following error bound:
		\begin{equation}
			\bE\calA([N],\calM_{\textup{init}})\geq  \frac{1}{2}\frac{L^2}{\mu^2}\ln k\calA([N],\calM_{\textup{OPT}}).
		\end{equation}
	\end{theorem}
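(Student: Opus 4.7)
The plan is to combine the $\Omega(\ln k)$ lower bound inherent to classical $k$-means$++$ seeding with the $\Omega(L^2/\mu^2)$ blow-up produced by the asymmetric-curvature construction \eqref{eq:lb-d-dim} just established in the preceding proposition, and merge them into a single instance whose ratio is $\tfrac12 (L^2/\mu^2)\ln k$.

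Concretely, I would work in $\bR^{kd}$ partitioned into $k$ mutually orthogonal coordinate blocks and place one ``cluster'' in each block. Within block $l$ I embed a rescaled copy of the $(n+1)$-point asymmetric instance from \eqref{eq:lb-d-dim}, so that $n$ of the $f_i$'s attached to that block are $L$-strongly convex along the block's preferred direction and $\mu$-strongly convex transversely, while one ``dual'' $f_i$ has the curvatures swapped. The blocks are translated by a large separation parameter $R$ so that the minimizer of any $f_i$ in block $l$ looks, from the perspective of every $f_j$ in a different block, like a far-away point whose contribution is dominated by a common term of order $\mu R^2$. This orthogonality is intended to decouple the $k$ globally optimal anchors: $\calM_{\textup{OPT}}$ consists of one representative per block and $F^*-f^*$ equals the sum over blocks of the per-block optima analyzed in the preceding proposition.

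The argument then proceeds in two coupled stages. In the intra-block stage, conditional on Algorithm~\ref{alg:init-A} placing exactly one anchor per block, each block contributes an expected optimality-gap ratio of order $L^2/\mu^2$ by reusing the preceding proposition verbatim. In the inter-block stage, I would tune $n$ and $R$ so that the sampling weights in \eqref{eq:w-wight} retain enough mass on already-covered blocks to reproduce a coupon-collector effect: after $k$ rounds, Algorithm~\ref{alg:init-A} fails to hit a $\Theta(1/\ln k)$ fraction of blocks in expectation, and summed across blocks this amplifies the per-block ratio by a $\Theta(\ln k)$ factor, mirroring the Arthur--Vassilvitskii $\ln k$ lower bound for $k$-means$++$. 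Multiplying the two contributions delivers the claimed $\tfrac12(L^2/\mu^2)\ln k$ lower bound on $\bE\calA([N],\calM_{\textup{init}})/\calA([N],\calM_{\textup{OPT}})$.

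The main obstacle is calibrating $n$ and $R$ so that both effects remain simultaneously in force. The sampling weights in Algorithm~\ref{alg:init-A} are proportional to $f_i(\vx_{j'}^{(0)})-f_i^*$ rather than squared distances, so the classical $k$-means$++$ coupon-collector argument does not transfer verbatim: the asymmetric curvature inflates the weight of the ``dual'' point relative to its geometric neighbors by an extra factor of $L/\mu$, and I expect to need $n$ to scale as a small power of $L/\mu$ and $R^2$ to scale proportionally to $L/\mu$ so that at every round the mass on covered blocks is at least a constant fraction of the total mass, yet $R$ is not so large as to wash out the intra-block $L^2/\mu^2$ ratio. Verifying this per-round mass bound and propagating it through an explicit evaluation of $\bE\calA([N],\calM_{\textup{init}})$ that keeps the $(L/\mu)^2$ factor intact is where the bulk of the technical work will lie.
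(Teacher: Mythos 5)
Your high-level choice of construction is sound and matches the paper's: orthogonal "blocks" (the paper uses $\mathbb{R}^{2k}$, with cluster centers at the vertices of a $k$-simplex in the first $k$ coordinates and the asymmetric-curvature dichotomy from \eqref{eq:lb-d-dim} encoded in the last $k$ coordinates), a large inter-cluster separation (the paper takes the simplex edge length $m=\exp(n)$ with $n\geq 100k^2$), and $n+1$ sub-functions per cluster, $n$ of them $L$-curved along the preferred direction and one "dual" with curvatures swapped.

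However, your "two coupled stages" plan misattributes the source of the $\ln k$ factor, and this is a genuine gap. You propose obtaining $\ln k$ from a coupon-collector effect in which a $\Theta(1/\ln k)$ fraction of blocks remains \emph{uncovered} after $k$ rounds. That mechanism cannot produce a ratio that stays proportional to $\ln k$: with a separation of order $m$, an uncovered block contributes potential $\sim n\mu m^2$, while $\calA([N],\calM_{\textup{OPT}})$ remains $O(k\mu)$ independent of $m$, so even a single uncovered block would drive the ratio to infinity as $m\to\infty$. The actual mechanism in the paper is the one underlying the Arthur--Vassilvitskii $2\ln k$ lower bound: the algorithm typically \emph{does} cover all clusters, but residual optimality gaps from already-covered clusters accumulate across the $k$ rounds. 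The paper tracks this with a state $(a,b,u)$ (clusters covered by the majority point $\vx_{1,l}^*$, clusters covered by the minority point $\vx_{n+1,l}^*$, clusters still uncovered) and proves by induction an explicit lower-bound potential $\phi_{a,b,u,t}\geq\alpha^{t+1}\varphi_{a,b,u,t}$ in which the dominant term is $(\frac{2L^2}{\mu}+2\mu)G_u$ with $G_u$ defined by $G_u-G_{u-1}=\beta(1+H_{u-1})$, so $G_{k-1}\gtrsim k\ln k$. The $L^2/\mu^2$ factor is not a separable "intra-block stage" multiplied on at the end; it enters through the coefficient $\frac{2L^2}{\mu}+2\mu$ of $G_u$ and through the asymmetric cost $2nL$ vs.\ $2\mu$ of selecting the wrong representative inside a covered cluster, and the division by $\calA([N],\calM_{\textup{OPT}})\leq 2k\mu$ then yields $\frac12\frac{L^2}{\mu^2}\ln k$.

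Two further problems with the sketch as written. First, conditioning on "exactly one anchor per block" both changes the sampling law and, if it held, would make the sum of per-block contributions scale like $\frac{L^2}{\mu^2}$ (not $\frac{L^2}{\mu^2}\ln k$), since each block would then contribute exactly the single-step bound of the preceding proposition; the extra $\ln k$ cannot come from this conditioning. Second, the speculated parameter scalings ($n$ a small power of $L/\mu$, $R^2\propto L/\mu$) do not match what is needed; the paper takes $n$ polynomial in $k$ (independent of $L/\mu$) and $m=\exp(n)$, chosen so that the damping factors $\alpha=1-1/m$, $\beta=1-1/\sqrt n$ are close enough to $1$ that the multiplicative losses $\alpha^k$ and $\beta$ in the induction are $\Omega(1)$. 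To complete the proof you would need to (i) identify the correct state variables $(a,b,u)$, (ii) set up and verify the one-step recursion for a candidate potential $\varphi_{a,b,u,t}$ including the harmonic-type term $G_u$, and (iii) tune $m,n$ so that the recursion closes; none of these steps is present in the proposal.
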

	\begin{proof}
		We construct the following problem. We fix the cluster number to be $k$. We let the dimension to be $2k$. We pick the vertices of a $k$-simplex as the ``centers" of $k$ clusters. The $k$-simplex is embedded in a $k-1$ dimensional subspace. We let the first $k$ elements of the vertices' coordinates to be non-zero, while the other elements are zero. We denote the first $k$ elements of the $l$-th vertex by $\xi^{(l)}\in\bR^k$. We let the $k$-simplex be centered at the origin, so that the magnitudes $\|\xi^{(l)}\|$'s are the same. We let $m$ be the edge length of the simplex. The functions in each cluster follows the orthogonal construction technique in \eqref{eq:lb-d-dim}. Specifically, in cluster $l$, we construct $n+1$ functions mapping from $\bR^{2k}$ to $\bR$ as
		\begin{equation}
			\label{eq:lb-functions}
			\begin{split}
				f_{i,l}(y)&=\frac{\mu}{2}\|y_{1:d}-\xi^{(l)}\|^2+\frac{\mu}{2}\sum_{j\geq k+1,j\not=k+l}y_j^2+\frac{L}{2}(y_{k+l}+1)^2,\quad i=1,2,\ldots,n,\\
				f_{i,l}(y)&=\frac{L}{2}\|y_{1:d}-\xi^{(l)}\|^2+\frac{L}{2}\sum_{j\geq k+1,j\not=k+l}y_j^2+\frac{\mu}{2}(y_{k+l}-1)^2,\quad i=n+1.
			\end{split}
		\end{equation}
		We have a total of $N=k(n+1)$ sub-functions. We let $m=\exp(n)$, $n\gg 1$, so that $\{f_{i,l}\}_{i=1}^{n+1}$ will be assigned in the same cluster when computing the minimizer of the objective function $F$. We let $\ve_l\in\bR^k$ be the $l$-th unit vector, then the minimizers of the above sub-functions are $\vx_{i,l}^*=[\xi^{(l)};-\ve_{l}] (i=1,2,\ldots,n)$ and $\vx_{n+1,l}^*=[\xi^{(l)};\ve_{l}]$. We let $S^*$ be the set of all the minimizers $\{\vx_{1,l}\}_{l=1}^k\cup \{\vx_{n+1,l}\}_{l=1}^k$.
		For each cluster $l$, we can compute
		$$
		\min_{y}\sum_{i=1}^{n+1}(f_{i,l}(y)-f_{i,l}^*)=\frac{2nL\mu}{nL+\mu}.
		$$
		Thus, we have
		$$
		\calA([N],\calM_{\textup{OPT}})= k\frac{2nL\mu}{nL+\mu}.
		$$
		
		Let $\calM$ be a nonempty subset of $S^*$. We study the optimality gap of $F$ when sampling the new centers based on $\calM$. We divide the $k$ clusters into 4 classes as follows:
		\begin{align*}
			&C_{a}=\{ l\,|\, \vx_{1,l}^*\in \calM, \vx_{n+1,l}^*\not\in\calM \},\\
			&C_{b}=\{ l\,|\, \vx_{n+1,l}^*\in \calM, \vx_{1,l}^*\not\in\calM \},\\
			&C_{f}=\{ l\,|\, \vx_{1,l}^*\in \calM, \vx_{n+1,l}^*\in\calM \},\\
			&C_{u}=\{ l\,|\, \vx_{1,l}^*\not\in \calM, \vx_{n+1,l}^*\not\in\calM \}.
		\end{align*}
		We define $a=|C_a|, b=|C_b|, u=|C_u|$. Consider $\calM$ as the existing centers, we continue sampling $t\leq u$ new centers using Algorithm \ref{alg:init-A}. Let $\vw_1^*,\vw_2^*,\ldots,\vw_t^*$ be the newly sampled centers. We define the quantity
		$$
		\phi_{a,b,u,t}=\bE \calA([N],\calM\cup\{\vw_1^*,\vw_2^*,\ldots,\vw_t^*\}),
		$$
		which is the expected optimality gap after sampling.
		We will prove by induction that
		\begin{equation}
			\label{eq:phi-lb}
			\begin{split}
				\phi_{a,b,u,t}\geq \alpha^{t+1}
				\bigg[\frac{1}{2}
				\left(n\left(\mu m^2 +\mu+L\right)+\left(Lm^2+L+\mu\right)\right)(u-t) \\
				+(2nLb+2\mu a)(1+H_u)+\left( \frac{2L^2}{\mu}+2\mu \right)G_u
				\bigg].
			\end{split}
		\end{equation}
		Here $H_u$ is the harmonic series. $G_u$ is recursively defined as:
		$$
		G_0=0,\quad G_u-G_{u-1}=\beta(1+H_{u-1}).
		$$
		The parameter $0<\alpha,\beta<1$ are chosen as
		$$
		\alpha =1-\frac{1}{m},\quad \beta=1-\frac{1}{\sqrt{n}}.
		$$
		We denote the right0hand side of \eqref{eq:phi-lb} as $\alpha^{t+1}\varphi_{a,b,u,t}$.
		
		We consider the case where $t=0$, we have
		\begin{align*}
			\phi_{a,b,u,0}=\frac{1}{2}
			\left(n\left(\mu m^2 +\mu+L\right)+\left(Lm^2+L+\mu\right)\right)u+2nLb+2\mu a.
		\end{align*}
		In the mean while,
		$$
		\varphi_{a,b,u,0}=\frac{1}{2}
		\left(n\left(\mu m^2 +\mu+L\right)+\left(Lm^2+L+\mu\right)\right)u+(2nLb+2\mu a)(1+H_u)+\left(   \frac{2L^2}{\mu}+2\mu \right)G_u.
		$$
		If $u=0$, we have
		$$
		\phi_{a,b,0,0}= \varphi_{a,b,0,0}\geq \alpha \varphi_{a,b,0,0}.
		$$
		If $u\geq 1$, then $\frac{1}{2}
		\left(n\left(\mu m^2 +\mu+L\right)+\left(Lm^2+L+\mu\right)\right)u$ becomes the leading term,
		\begin{align*}
			&(1-\alpha) \frac{1}{2}
			\left(n\left(\mu m^2 +\mu+L\right)+\left(Lm^2+L+\mu\right)\right)u \\
			&\quad\geq \frac{1}{2}nm\mu u \\
			&\quad\geq (2nLb+2\mu a)(1+H_u)+\left(   \frac{2L^2}{\mu}+2\mu \right)G_u \\
			&\quad\geq \alpha \left((2nLb+2\mu a)(1+H_u)+\left(   \frac{2L^2}{\mu}+2\mu \right)G_u\right).
		\end{align*}
		Rearrange the left-hand side and the right-hand side of the inequality, we have:
		\begin{align*}
			& \frac{1}{2}
			\left(n\left(\mu m^2 +\mu+L\right)+\left(Lm^2+L+\mu\right)\right)u \\
			\geq& \alpha 
			\left(
			\frac{1}{2}
			\left(n\left(\mu m^2 +\mu+L\right)+\left(Lm^2+L+\mu\right)\right)u
			+ (2nLb+2\mu a)(1+H_u)+\left(   \frac{2L^2}{\mu}+2\mu \right)G_u
			\right)\\
			=& \alpha \varphi_{a,b,u,0}.
		\end{align*}
		Therefore, we have
		$$
		\phi_{a,b,u,0}\geq \alpha \varphi_{a,b,u,0}.
		$$
		Next, we induct on $t$. When $t\geq 1$, we have $u\geq 1$. We use the one-step transfer technique. We let 
		\begin{gather*}
			K=\frac{1}{2}
			\left(n\left(\mu m^2 +\mu+L\right)+\left(Lm^2+L+\mu\right)\right)u+2\mu a +2nbL,\\
			A=\frac{1}{2}
			\left(n\left(\mu m^2 +\mu+L\right)+\left(Lm^2+L+\mu\right)\right),\\
			B=\frac{2L^2}{\mu}+2\mu.
		\end{gather*}
		We have
		\begin{align*}
			&\phi_{a,b,u,t}\\
			=&\frac{n(\mu m^2+\mu+L)u}{2K}\phi_{a+1,b,u-1,t-1}+\frac{(Lm^2+L+\mu)u}{2K}\phi_{a,b+1,u-1,t-1} \\
			&+\frac{2nLb}{K}\phi_{a,b-1,u,t-1}+\frac{2\mu a}{K}\phi_{a-1,b,u,t-1}\\
			\geq&\frac{n(\mu m^2+\mu+L)u}{2K}\alpha^t\left[ 
			A(u-t)+(2nLb+2\mu a+2\mu)(1+H_{u-1}) +BG_{u-1} \right]\\
			&+\frac{(Lm^2+L+\mu)u}{2K} \alpha^t \left[A(u-t)+ (2nLb+2\mu a +2nL)(1+H_{u-1})  +BG_{u-1} \right] \\
			&+\frac{2nLb}{K} \alpha^t \left[A(u-t+1)+ (2nLb+2\mu a -2 nL)(1+H_u)  +BG_{u} \right] \\
			&+\frac{2\mu a}{K}\alpha^t \left[A(u-t+1)+ (2nLb+2\mu a-2\mu)(1+H_u)  +BG_{u} \right]\\
			=&\frac{n(\mu m^2+\mu+L)u}{2K} \alpha^t \varphi_{a,b,u,t}\\
			&+\frac{n(\mu m^2+\mu+L)u}{2K}\alpha^t\left[ 2\mu(1+H_{u-1})+(2nLb+2\mu a)(H_{u-1}-H_u)+B(G_{u-1}-G_u) \right]\\
			&+\frac{(Lm^2+L+\mu)u}{2K} \alpha^t \varphi_{a,b,u,t}\\
			&+\frac{(Lm^2+L+\mu)u}{2K} \alpha^t\left[ 2nL(1+H_{u-1})+(2nLb+2\mu a)(H_{u-1}-H_u)+B(G_{u-1}-G_u) \right]\\
			&+\frac{2nLb}{K}\alpha^t\varphi_{a,b,u,t}+\frac{2nLb}{K}\alpha^t(A-2nL (1+H_u))+\frac{2\mu a}{K}\alpha^t\varphi_{a,b,u,t}+\frac{2\mu a}{K}\alpha^t(A-2\mu (1+H_u))\\
			=&\alpha^t\varphi_{a,b,u,t}+\frac{1}{K}\alpha^t\left[ 
			\frac{1}{2}n(\mu m^2+\mu+L)u\left(2\mu-\beta \left( 
			2\mu+\frac{2L^2}{\mu} \right)\right)(1+H_{u-1})\right]\\
			&+\frac{1}{K}\alpha^t\bigg[
			(Lm^2+L+\mu)unL(1+H_{u-1})-\frac{1}{2}(Lm^2+L+\mu)u\beta B(1+H_{u-1})\\
			&\quad\quad\quad\quad -4\mu^2 a(1+H_u)-4n^2L^2b(1+H_u)
			\bigg]\\
			=&\alpha^t\varphi_{a,b,u,t}+\frac{1}{K}\alpha^t\left[ -\frac{1}{2}\left( 
			n(\mu m^2+\mu+L)+(Lm^2+L+\mu) \right)(2nLb+2\mu a) + A(2nLb+2\mu a)\right] \\
			&+\frac{1}{K}\alpha^t\bigg[ 
			\frac{1}{2}n(\mu m^2+\mu+L)u\left(-\frac{2L^2}{\mu}+\frac{1}{\sqrt{n}}(2\mu+\frac{2L^2}{\mu})\right)(1+H_{u-1})\\
			&\quad\quad\quad\quad+(Lm^2+L+\mu)unL(1+H_{u-1})\bigg]\\
			&+\frac{1}{K}\alpha^t\left[
			-\frac{1}{2}(Lm^2+L+\mu)u\beta B(1+H_{u-1})-4\mu^2 a(1+H_u)-4n^2L^2b(1+H_u)
			\right]\\
			=&\alpha^t\varphi_{a,b,u,t}+\frac{1}{K}\alpha^t\sqrt{n} m^2u(1+H_{u-1})(\mu^2+L^2)\\
			&+\frac{1}{K}\alpha^t\bigg[
			n(\mu+L)u\left(L-\frac{L^2}{\mu}+\frac{1}{\sqrt{n}}\left(\mu+\frac{L^2}{\mu}\right)\right)(1+H_{u-1})\\
			&\quad\quad\quad\quad-\frac{1}{2}(Lm^2+L+\mu)u\beta B(1+H_{u-1})-4\mu^2 a(1+H_u)-4n^2L^2b(1+H_u)
			\bigg]\\
			\stackrel{(a)}{\geq}&\alpha^t\varphi_{a,b,u,t}\\
			\geq & \alpha^{t+1}\varphi_{a,b,u,t}.
		\end{align*}
		For (a), we have
		\begin{multline*}
			\sqrt{n} m^2u(1+H_{u-1})(\mu^2+L^2)\geq n(\mu+L)u\left(L-\frac{L^2}{\mu}+\frac{1}{\sqrt{n}}\left(\mu+\frac{L^2}{\mu}\right)\right)(1+H_{u-1})
			\\
			-\frac{1}{2}(Lm^2+L+\mu)u\beta B(1+H_{u-1})-4\mu^2 a(1+H_u)-4n^2L^2b(1+H_u).
		\end{multline*}
		when $m=\exp(n)$ and $n\gg 1$. 
		
		Thus the inequality \eqref{eq:phi-lb} holds. Let $u=t=k-1$. We have
		$$
		\phi_{a,b,k-1,k-1}\geq \alpha^{k}
		\left[(2nLb+2\mu a)(1+H_{k-1})+\left(   \frac{2L^2}{\mu}+2\mu \right)G_{k-1}
		\right].
		$$
		Let $n\geq 100 k^2$. Since $m=\exp(n)\geq 100k^2$, then
		$$
		\alpha^k\geq \frac{3}{4},\quad \beta=1-\frac{1}{10k}\geq \frac{9}{10}.
		$$
		$$
		\phi_{a,b,t-1,t-1}\geq \frac{3}{4}\left( \frac{2L^2}{\mu}+2\mu \right)G_{k-1}.
		$$
		We have the following inequalities:
		\begin{align*}
			H_{k-1}&=1+\frac{1}{2}+\cdots+\frac{1}{k-1}\geq \int_1^k\frac{1}{t}\,dt=\ln k, \quad k\geq 1,\\
			G_k&=\beta\sum_{j=0}^{k-1}(1+H_{j})\geq\beta \left(k +\sum_{j=1}^k\ln j\right)\geq \beta\left(  k+\int_{t=1}^k\ln t\, dt \right)=\beta (k\ln k+1).
		\end{align*}
		Therefore, we have
		$$
		\bE \calA([N],\calM_{\textup{init}})\geq \frac{1}{2} k\ln k \left( \frac{2L^2}{\mu}+2\mu \right)= k\ln k \left( \frac{L^2}{\mu}+\mu \right).
		$$
		In the meanwhile, we have an upper bound estimate for $\calA([N],\calM_{\textup{OPT}})$. We pick $\calM_\xi=\{[\xi^{(l)};-\ve_l]\}_{l=1}^k$ as the centers. We have
		$$
		\calA([N],\calM_{\textup{OPT}})\leq \calA([N],\calM_\xi)= 2k \mu.
		$$
		Thus,
		$$
		\bE\calA([N],\calM_{\textup{init}})\geq \frac{1}{2}\ln k \frac{L^2}{\mu^2}\calA([N],\calM_{\textup{OPT}}).
		$$
	\end{proof}
	
	We prove two different error bounds when the estimate of $f_i(\vz)-f_i(\vx_i^*)$ is not accurate. We consider the additive and multiplicative errors on the oracle $f_i(\vz)-f_i(\vx_i^*)$.
	
	In Algorithm \ref{alg:init-A}, when computing the score $v_i^{(j)}$, we suppose we do not have the exact $f_i^*$, instead, we have an estimate $\tilde f_i^*$, such that
	$$
	|\tilde f_i^*-f_i^*|\leq \epsilon
	$$
	for a certain error factor $\epsilon>0$. We define
	$$
	\tilde\calA(\calI,\calM)=\sum_{i\in\calI}\max\left(\min_{\vz\in\calM}\left(f_i(\vz)-\tilde f_i^*\right),0\right)=
	\sum_{i\in\calI}\min_{\vz\in\calM}\left(\max\left(f_i(\vz)-\tilde f_i^*,0\right)\right).
	$$
	\begin{lemma}
		\label{lem:EA-noisy-bound-A-opt}
		Let $\calI$ be an index set, and $\calM$ be a finite point set. Suppose that $\tilde \calA(\calI,\calM)>0$. We sample an index $i\in\calI$ with probability $\frac{\tilde\calA(\{i\},\calM)}{\tilde\calA(\calI,\calM)}$, then we have the following inequality:
		\begin{equation}
			\label{eq:EA-A-noisy-opt-bound}
			\bE \tilde\calA(\calI,\calM\cup \{\vx_i^*\})\leq |\calI|\left( 1+\frac{4L}{\mu} \right)\epsilon+4\left( \frac{L^2}{\mu^2}+\frac{L}{\mu} \right)\min_\vz\sum_{i\in\calI}(f_i(\vz)-f_i(\vx_i^*)).
		\end{equation}
	\end{lemma}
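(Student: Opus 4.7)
The strategy is to emulate the chain Lemma \ref{lem:diff-Di-Di'} $\to$ Lemma \ref{lem:EA-delta-bound} $\to$ Proposition \ref{prop:ED-bound-D-opt} that gave the noiseless bound, but with $\tilde{\calA}$ in place of $\calA$ and with careful bookkeeping of where $\epsilon$ enters. The starting observation is the two-sided comparison
$$
\calA(\{i\},\calM)-\epsilon \;\leq\; \tilde\calA(\{i\},\calM) \;\leq\; \calA(\{i\},\calM)+\epsilon,
$$
which follows from $|\tilde f_i^*-f_i^*|\leq\epsilon$ and the fact that $f_i(z)-f_i^*\geq 0$ makes the outer $\max(\cdot,0)$ essentially passive on the side one cares about. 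Plugging this into Lemma \ref{lem:diff-Di-Di'} yields the noisy analogue
$$
\tilde\calA(\{i\},\calM) \;\leq\; \tfrac{2L}{\mu}\tilde\calA(\{i'\},\calM)+L\|\vx_i^*-\vx_{i'}^*\|^2+\bigl(1+\tfrac{2L}{\mu}\bigr)\epsilon.
$$

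Next I mimic the proof of Lemma \ref{lem:EA-delta-bound} with the noisy sampling distribution. Expand
$$
\bE\,\tilde\calA(\calI,\calM\cup\{\vx_i^*\})=\sum_{i\in\calI}\frac{\tilde\calA(\{i\},\calM)}{\tilde\calA(\calI,\calM)}\sum_{i'\in\calI}\min\!\Bigl(\tilde\calA(\{i'\},\calM),\;\max(f_{i'}(\vx_i^*)-\tilde f_{i'}^*,0)\Bigr),
$$
and upper bound the leading $\tilde\calA(\{i\},\calM)$ by the trivial average $\frac{1}{|\calI|}\sum_{i''\in\calI}$ of the noisy comparison above, producing three summands. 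For the $\tilde\calA(\{i''\},\calM)$-summand, use $\min(\cdot,\cdot)\leq\max(f_{i'}(\vx_i^*)-\tilde f_{i'}^*,0)\leq \frac{L}{2}\|\vx_i^*-\vx_{i'}^*\|^2+\epsilon$; this contributes $\frac{L^2}{\mu}\Delta_\calI+\frac{2L}{\mu}|\calI|\epsilon$. For the $\|\vx_i^*-\vx_{i''}^*\|^2$-summand, use instead $\min(\cdot,\cdot)\leq\tilde\calA(\{i'\},\calM)$ so the inner sum telescopes to $\tilde\calA(\calI,\calM)$ and cancels the denominator; this contributes $L\Delta_\calI$. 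The same telescoping in the $\epsilon$-summand gives $|\calI|(1+\frac{2L}{\mu})\epsilon$. Collecting these three pieces produces
$$
\bE\,\tilde\calA(\calI,\calM\cup\{\vx_i^*\})\leq\Bigl(\tfrac{L^2}{\mu}+L\Bigr)\Delta_\calI+|\calI|\Bigl(1+\tfrac{4L}{\mu}\Bigr)\epsilon.
$$

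Finally, I apply the centroid argument from Lemma \ref{lem:delta-A-bound} verbatim but with $\calI$ in place of $A_l$ (the argument never used that $A_l$ was an optimal cluster, only the $\mu$-strong convexity of the $f_i$'s), giving $\Delta_\calI\leq\frac{4}{\mu}\min_{\vz}\sum_{i\in\calI}(f_i(\vz)-f_i(\vx_i^*))$. Substituting yields exactly the stated bound.

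\textbf{Main obstacle.} The algebra is routine once the noisy Lemma \ref{lem:diff-Di-Di'} is in hand; the only delicate point is tracking how $\epsilon$ accumulates across the three summands without overcounting. In particular one must avoid confusing the $\epsilon$ introduced in the noisy Lemma \ref{lem:diff-Di-Di'} (which produces the $\frac{2L}{\mu}|\calI|\epsilon$ term) with the $\epsilon$ introduced by bounding $\max(f_{i'}(\vx_i^*)-\tilde f_{i'}^*,0)$ (which produces the remaining $|\calI|(1+\frac{2L}{\mu})\epsilon$ term); combining them gives the cleanly packaged $|\calI|(1+\frac{4L}{\mu})\epsilon$ in the statement.
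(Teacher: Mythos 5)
Your proposal is correct and follows essentially the same route as the paper's proof: you derive the noisy analogue of Lemma~\ref{lem:diff-Di-Di'}, upper-bound the sampling weights by the average of that comparison, split into the same three summands (using $\min(\cdot,\cdot)\le\tilde\calA(\{i''\},\calM)$ to telescope the $\|\cdot\|^2$ and $\epsilon$ pieces and $\min(\cdot,\cdot)\le\max(f_{i''}(\vx_i^*)-\tilde f_{i''}^*,0)$ on the remaining piece), and finish with the centroid bound $\Delta_\calI\le\frac{4}{\mu}\min_\vz\sum_{i\in\calI}(f_i(\vz)-f_i(\vx_i^*))$. The only cosmetic difference is that you package the noisy comparison as a corollary of the two-sided sandwich $\calA\pm\epsilon$ around $\tilde\calA$ plus the original Lemma~\ref{lem:diff-Di-Di'}, whereas the paper rederives it inline; the resulting inequality and subsequent bookkeeping are identical.
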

	\begin{proof}
		We have
		\begin{align*}
			\tilde\calA(\{i\},\calM)&= \max\left(\min_{\vz\in\calM}(f_i(\vz)-\tilde f_i^*) ,0\right)\\
			&\leq \epsilon +\min_{\vz\in\calM}(f_i(\vz)-f_i^*) \\
			&\leq \epsilon+\frac{L}{2}\min_{\vz\in\calM} \|\vz-\vx_i^*\|^2 \\
			&\leq \epsilon+L\|\vx_i^*-\vx_{i'}^*\|^2+L\min_{\vz\in\calM}\|\vz-\vx_{i'}^*\|^2\\
			&\leq \epsilon+L\|\vx_i^*-\vx_{i'}^*\|^2+\frac{2L}{\mu}\min_{\vz\in\calM}(f_{i'}(\vz)-f_{i'}(\vx_{i'}^*))\\
			&\leq \left(1+\frac{2L}{\mu}\right)\epsilon +L\|\vx_i^*-\vx_{i'}^*\|^2+\frac{2L}{\mu} \min_{\vz\in\calM}(f_{i'}(\vz)-\tilde f_{i'}^*) \\
			&\leq \left(1+\frac{2L}{\mu}\right)\epsilon +L\|\vx_i^*-\vx_{i'}^*\|^2+\frac{2L}{\mu}\max\left( \min_{\vz\in\calM}(f_{i'}(\vz)-\tilde f_{i'}^*) ,0\right)\\
			&= \left(1+\frac{2L}{\mu}\right)\epsilon +L\|\vx_i^*-\vx_{i'}^*\|^2+\frac{2L}{\mu}\tilde \calA(\{i'\},\calM).
		\end{align*}
		We have
		\begin{align*}
			&\bE \tilde\calA(\calI,\calM\cup \{\vx_i^*\})\\
			=&\sum_{i\in\calI}\frac{\tilde\calA(\{i\},\calM)}{\tilde \calA(\calI,\calM)}\tilde\calA(\calI,\calM\cup\{\vx_i^* \})\\
			=&\sum_{i\in\calI}\frac{\tilde\calA(\{i\},\calM)}{\tilde \calA(\calI,\calM)}\sum_{i''\in\calI}\tilde\calA(\{i''\},\calM\cup\{\vx_i^* \})\\
			=&\sum_{i\in\calI}\frac{\tilde\calA(\{i\},\calM)}{\tilde \calA(\calI,\calM)}\sum_{i''\in\calI}\min( \tilde\calA(\{i''\},\calM), \max(f_{i''}(\vx_i^*)-\tilde f_{i''}^*,0) )\\
			\leq& \sum_{i\in\calI}\frac{  \frac{1}{|\calI|}\sum_{i'\in\calI}\left\{ \left(1+\frac{2L}{\mu}\right)\epsilon+L\|\vx_i^*-\vx_{i'}^*\|^2+\frac{2L}{\mu}\tilde\calA(\{i'\},\calM)  \right\}  }{\tilde \calA(\calI,\calM)}\\
			&\qquad \cdot \sum_{i''\in\calI}\min( \tilde\calA(\{i''\},\calM),\max(f_{i''}(\vx_i^*)-\tilde f_{i''}^*,0))\\
			\leq & |\calI|\left( 1+\frac{2L}{\mu} \right)\epsilon+L\frac{1}{|\calI|}\sum_{i\in \calI}\sum_{i'\in\calI}\| \vx_i^*-\vx_{i'}^* \|^2+\frac{2L}{\mu}\frac{1}{|\calI|}\sum_{i\in\calI}\sum_{i''\in\calI}\max(f_{i''}(\vx_i^*)-\tilde f_{i''}^*,0)\\
			\leq & |\calI|\left( 1+\frac{4L}{\mu} \right)\epsilon+\left(L+\frac{L^2}{\mu}\right)\frac{1}{|\calI|}\sum_{i\in \calI}\sum_{i'\in\calI}\| \vx_i^*-\vx_{i'}^* \|^2\\
			= & |\calI|\left( 1+\frac{4L}{\mu} \right)\epsilon+2\left(L+\frac{L^2}{\mu}\right)\min_\vz\sum_{i\in \calI}\| \vx_i^*-\vz \|^2\\
			\leq & |\calI|\left( 1+\frac{4L}{\mu} \right)\epsilon+4\left(\frac{L^2}{\mu^2}+\frac{L}{\mu}\right)\min_\vz\sum_{i\in \calI}(f_i(\vz)-f_i(\vx_i^*)).
		\end{align*}
	\end{proof}

	\begin{lemma}
		\label{lem:main-noisy}
		Suppose that we have fixed indices $i_1,i_2,\ldots,i_j$. We define the finite set
		$
		\calM_j=\{ x_{i_1}^*, x_{i_2}^*,\ldots,x_{i_j}^* \}
		$.
		We define the index sets 
		$
		L_j=\{ l: A_l \cap \{i_1,i_2,\ldots,i_j\}\not= \emptyset \}, L_j^c=\{ l:A_l \cap \{i_1,i_2,\ldots,i_j\}= \emptyset \},
		\calI_j=\cup_{l\in L_j}  A_l,\calI_j^c=\cup_{l\in L_j^c}  A_l
		$. Let $u=|L_j^c|$. Suppose that $u>0$. We sample $t\leq u$ new indices. We let
		$
		\calM_{j,s}^+=\{x_{i_1}^*,x_{i_2}^*,\ldots,x_{i_j}^*,x_{i_{j+1}}^*,\ldots,x_{i_{j+s}}^*\}
		$ for $0\leq s\leq t$.
		In each round of sampling, the probability of $i_{j+s}, s>0$, being sampled as $i$ is $\frac{ \tilde\calA(\{i\},\calM_{j,s-1}^+)}{\tilde\calA([N],\calM_{j,s-1}^+)}$. Then we have the following bound:
		\begin{equation}
			\label{eq:A-noisy-bound}
			\begin{split}
				\bE \tilde \calA([N],\calM_{j,t}^+)& \leq (1+H_t) \left[ \tilde \calA(\calI_j,\calM_j) +|\calI_j^c|\left(1+\frac{4L}{\mu}\right)\epsilon +  4\left( \frac{L^2}{\mu^2}+\frac{L}{\mu} \right) \calA(\calI_j^c, \calM_{\textup{OPT}}) \right]
				\\ & + \frac{u-t}{u} \tilde \calA(\calI_j^c,\calM_j).
			\end{split}
		\end{equation}
		
	\end{lemma}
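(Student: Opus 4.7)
The proof will follow the same double induction on $u$ and $t$ used for Lemma~\ref{lem:main}, with the additive noise $\epsilon$ threaded through carefully. The key substitution is that every appeal to Lemma~\ref{lem:EA-delta-bound} plus Lemma~\ref{lem:delta-A-bound} in the noiseless proof is replaced by a single invocation of Lemma~\ref{lem:EA-noisy-bound-A-opt}, which already packages the strongly-convex/smooth transfer and pays only a clean additive term $|\calI|(1+4L/\mu)\epsilon$. Since $\min_{\vz}\sum_{i\in A_l}(f_i(\vz)-f_i(\vx_i^*)) \leq \calA(A_l,\calM_{\textup{OPT}})$ (because $\calM_{\textup{OPT}}$ provides a feasible $\vz$), this immediately yields a bound of the right shape for each cluster.

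First I will dispatch the base cases. For $t=0$ the claim reduces to $\tilde\calA([N],\calM_j) = \tilde\calA(\calI_j,\calM_j)+\tilde\calA(\calI_j^c,\calM_j)$, which is immediate. For $t=1, u=1$, I will split on whether the single sampled index lies in $\calI_j$ or $\calI_j^c$; in the first case the bound is trivial since adding a center can only decrease $\tilde\calA$, and in the second case I apply Lemma~\ref{lem:EA-noisy-bound-A-opt} with $\calI=\calI_j^c$ (which equals some $A_l$ when $u=1$) and $\calM=\calM_j$, producing the additive $|\calI_j^c|(1+4L/\mu)\epsilon$ term and a $4(L^2/\mu^2+L/\mu)\calA(\calI_j^c,\calM_{\textup{OPT}})$ term. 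Combining the two cases weighted by the sampling probabilities (which are $\tilde\calA(\calI_j,\calM_j)/\tilde\calA([N],\calM_j)$ and $\tilde\calA(\calI_j^c,\calM_j)/\tilde\calA([N],\calM_j)$ respectively) reproduces \eqref{eq:A-noisy-bound} with $H_1=1$.

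For the inductive step I assume \eqref{eq:A-noisy-bound} for $(u-1,t-1)$ and $(u,t-1)$ and condition on where $i_{j+1}$ falls. The case $i_{j+1}\in\calI_j$ changes $\calM_j$ to $\calM_j\cup\{\vx_{i_{j+1}}^*\}$ while leaving $\calI_j^c$ and $u$ unchanged, so I apply the $(u,t-1)$ hypothesis and bound $\tilde\calA(\calI_j,\calM_j\cup\{\vx_{i_{j+1}}^*\})\leq\tilde\calA(\calI_j,\calM_j)$. The case $i_{j+1}\in\calI_j^c$ is subdivided by which $A_l$ contains $i_{j+1}$; for each such $l$, the $(u-1,t-1)$ hypothesis applies with $\calI_j\cup A_l$ and $\calI_j^c\setminus A_l$, and I further bound $\tilde\calA(A_l,\calM_j\cup\{\vx_i^*\})$ using Lemma~\ref{lem:EA-noisy-bound-A-opt} applied to $\calI=A_l$. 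The averaging over $l\in L_j^c$ proceeds exactly as in Lemma~\ref{lem:main}, reusing the Cauchy--Schwarz inequality $\sum_{l\in L_j^c}\tilde\calA(A_l,\calM_j)^2\geq \tilde\calA(\calI_j^c,\calM_j)^2/u$ to absorb the residual $\frac{u-t}{u-1}\tilde\calA(\calI_j^c\setminus A_l,\calM_j)$ term into $\frac{u-t}{u}\tilde\calA(\calI_j^c,\calM_j)$.

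The main obstacle will be bookkeeping: unlike the noiseless proof, the noisy bound carries a $|\calI_j^c|(1+4L/\mu)\epsilon$ contribution that must be tracked across the recursive split. The key observation that makes this tractable is that this coefficient depends only on $|\calI_j^c|$, not on the sampled index, so when I break $\tilde\calA(\calI_j^c,\calM_j\cup\{\vx_i^*\})$ into the $A_l$ that was hit and its complement, the per-cluster additive errors $|A_l|(1+4L/\mu)\epsilon$ telescope to $|\calI_j^c|(1+4L/\mu)\epsilon$ exactly, matching the target in \eqref{eq:A-noisy-bound}. Finally, combining the two conditional expectations against their sampling weights and using $H_{t-1}+1/u \leq H_t$ together with the positivity of $\tilde\calA$ yields the stated bound, completing the induction.
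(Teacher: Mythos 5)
Your proposal is correct and follows essentially the same path as the paper's proof: the same double induction on $(u,t)$ with base cases $t=0$ and $(u,t)=(1,1)$, the same split on whether the first new index lands in $\calI_j$ or $\calI_j^c$, the same Cauchy--Schwarz step $\sum_{l\in L_j^c}\tilde\calA(A_l,\calM_j)^2\geq \tilde\calA(\calI_j^c,\calM_j)^2/u$ to absorb the residual, and the same reliance on Lemma~\ref{lem:EA-noisy-bound-A-opt} as the drop-in noisy replacement for Lemmas~\ref{lem:EA-delta-bound} and~\ref{lem:delta-A-bound}. Your ``telescoping'' observation, that $|A_l|(1+4L/\mu)\epsilon + |\calI_j^c\setminus A_l|(1+4L/\mu)\epsilon = |\calI_j^c|(1+4L/\mu)\epsilon$, is indeed exactly what makes the additive error term propagate cleanly through the induction, matching the paper.

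One small caution for when you write it out: in the final combination, the reason $H_{t-1}+1/u\leq H_t$ suffices is not merely ``positivity of $\tilde\calA$'' but the cross-term estimate $\tilde\calA(\calI_j,\calM_j)\tilde\calA(\calI_j^c,\calM_j)/\tilde\calA([N],\calM_j)\leq\tilde\calA(\calI_j,\calM_j)$, which lets you bound the surplus $\frac{1}{u}\cdot\text{(cross term)}$ by $\frac{1}{t}\cdot\text{(bracketed quantity)}$; state that explicitly. Similarly, in the $(1,1)$ base case, bound $q\,\tilde\calA(\calI_j,\calM_j)\leq\tilde\calA(\calI_j,\calM_j)$ (yielding $2\tilde\calA(\calI_j,\calM_j)+\cdots\leq 2[\cdots]$) rather than $q\,\tilde\calA(\calI_j,\calM_j)\leq\tilde\calA(\calI_j^c,\calM_j)$, since the latter does not directly compare to the target $2[\tilde\calA(\calI_j,\calM_j)+|\calI_j^c|\alpha\epsilon+\beta\calA(\calI_j^c,\calM_{\textup{OPT}})]$.
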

	\begin{proof}
		The key idea of the proof is similar to Lemma \ref{lem:main}.
		We let
		$$
		\alpha = 1+\frac{4L}{\mu},\quad \beta=4\left( \frac{L^2}{\mu^2}+\frac{L}{\mu} \right).
		$$
		We prove by induction.
		When $t=0$, the inequality obviously holds. When $t>0, u=1$, we have the inequality:
		\begin{align*}
			& \bE \tilde\calA([N],\calM_{j,t}^+)\\
			\leq& \frac{\tilde \calA(\calI_j,\calM_j)}{\tilde \calA([N],\calM_j)}\tilde \calA([N],\calM_j)+
			\frac{\tilde \calA(\calI_j^c,\calM_j)}{\tilde \calA([N],\calM_j)} \left( \tilde \calA(\calI_j,\calM_j) +|\calI_j^c|\alpha\epsilon +\beta \calA(\calI_j^c, \calM_{\textup{OPT}}) \right)\\
			\leq& \tilde \calA(\calI_j,\calM_j) +|\calI_j^c|\alpha\epsilon +\beta \calA(\calI_j^c, \calM_{\textup{OPT}})+ \tilde \calA(\calI_j^c,\calM_j).
		\end{align*}
		For the general $(t,u)$ case, $\bE\tilde\calA([N],\calM_{j,t}^+)$ can be bounded by two parts. With probability $\frac{\tilde\calA(\calI_j,\calM_j)}{\tilde\calA([N],\calM_j)}$, the first sampled index lies in $\calI_j$, and the conditional expectation is bounded by:
		$$
		(1+H_{t-1}) \left[ \tilde \calA(\calI_j,\calM_j) +|\calI_j^c|\alpha\epsilon + \beta \calA(\calI_j^c, \calM_{\textup{OPT}}) \right]
		+ \frac{u-t+1}{u} \tilde \calA(\calI_j^c,\calM_j).
		$$
		With probability $\frac{\tilde\calA(\calI_j^c,\calM_j)}{\tilde\calA([N],\calM_j)}$, the first sampled index lies in $\calI_j^c$. 
		The conditional expectation is bounded by:
		\begin{align*}
			&\sum_{l\in L^c}\frac{\tilde\calA( A_l,\calM_j)}{\tilde\calA(\calI_j^c,\calM_j)}\sum_{i\in  A_l}\frac{\tilde\calA (\{i\},\calM_j)}{\tilde \calA( A_l,\calM_j)}\bigg\{(1+H_{t-1})\Big( \tilde \calA(\calI_j\cup A_l,\calM_j\cup\{ \vx_i^*\})+ |\calI_j^c\backslash  A_l|\alpha \epsilon\\
			&\quad\quad  + \beta\calA(\calI_j^c\backslash  A_l, \calM_{\textup{OPT}} ) \Big) +\frac{u-t}{u-1}\tilde\calA(\calI_j^c\backslash  A_l,\calM_j\cup\{ \vx_i^* \})\bigg\}\\
			&\leq \sum_{l\in L^c}\frac{\tilde\calA( A_l,\calM_j)}{\tilde\calA(\calI_j^c,\calM_j)}  \sum_{i\in  A_l}\frac{\tilde\calA (\{i\},\calM_j)}{\tilde \calA( A_l,\calM_j)}
			\bigg\{ (1+H_{t-1}) \Big( \tilde\calA(\calI_j,\calM_j)+ \tilde\calA(A_l,\calM_j\cup \{\vx_i^*\})+ |\calI_j^c\backslash  A_l|\alpha\epsilon\\
			&\quad\quad +\beta \calA(\calI_j^c, \calM_{\textup{OPT}})-\beta\calA(A_l,\calM_{\textup{OPT}}) \Big)+\frac{u-t}{u-1}(\tilde\calA(\calI_j^c ,\calM_j)-\tilde \calA( A_l,\calM_j))\bigg\}\\
			&\leq  (1+H_{t-1}) \left( \tilde\calA(\calI_j,\calM_j)+ |\calI_j^c| \alpha\epsilon+\beta \calA(\calI_j^c, \calM_{\textup{OPT}}) \right)+ \frac{u-t}{u}\tilde\calA(\calI_j^c ,\calM_j).
		\end{align*}
		Overall, we have the following inequality:
		\begin{align*}
			&\bE\tilde\calA([N],\calM_{j,t}^+)\\
			\leq & \frac{\tilde\calA(\calI_j,\calM_j)}{\tilde\calA([N],\calM_j)}\left\{(1+H_{t-1}) \left[ \tilde \calA(\calI_j,\calM_j) +|\calI_j^c|\alpha\epsilon + \beta \calA(\calI_j^c, \calM_{\textup{OPT}}) \right]
			+ \frac{u-t+1}{u} \tilde \calA(\calI_j^c,\calM_j)\right\}\\
			&\quad +\frac{\tilde\calA(\calI_j^c,\calM_j)}{\tilde\calA([N],\calM_j)}
			\left\{  (1+H_{t-1}) \left( \tilde\calA(\calI_j,\calM_j)+ |\calI_j^c| \alpha\epsilon+\beta \calA(\calI_j^c, \calM_{\textup{OPT}}) \right)+ \frac{u-t}{u}\tilde\calA(\calI_j^c ,\calM_j) \right\}\\
			\leq & (1+H_t) \left( \tilde\calA(\calI_j,\calM_j)+ |\calI_j^c| \alpha\epsilon+\beta \calA(\calI_j^c, \calM_{\textup{OPT}}) \right) +\frac{u-t}{u} \tilde\calA(\calI_j^c ,\calM_j).
		\end{align*}
	\end{proof}
	
	\begin{theorem}[Restatement of Theorem \ref{thm:add-noisy-init-bound}]
		\label{thm:noisy-init-bound}
		Suppose that the solution set $S^*$ is $(k,\sqrt{\frac{2\epsilon}{\mu}})$-separate. Let 
		$$
		\calM_{\textup{init}}=\{ \vx_{i_1}^*,\vx_{i_2}^*,\ldots, \vx_{i_k}^* \}
		$$
		be the initial points sampled by the random initialization Algorithm \ref{alg:init-A} with noisy oracles $\tilde f_i^*$.
		We have the following bound:
		\begin{equation*}
			\frac{1}{N}\bE \calA([N],\calM_{\textup{init}})\leq \epsilon+(2+\ln k) \left(1+\frac{4L}{\mu}\right) \epsilon + 4(2+\ln k)\left( \frac{L^2}{\mu^2}+\frac{L}{\mu} \right)\frac{1}{N}\calA([N],\calM_{\textup{OPT}}).
		\end{equation*}
	\end{theorem}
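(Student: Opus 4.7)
My plan is to prove Theorem \ref{thm:noisy-init-bound} by mimicking the strategy used for the clean version (Theorem \ref{thm:init-bound}), but plugging in Lemma \ref{lem:main-noisy} in place of Lemma \ref{lem:main}, and carefully accounting for the additive $\epsilon$-errors that propagate through the recursion.

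First I would condition on the first sampled index $i_1 \in [N]$, let $l_1$ be the unique index with $i_1 \in A_{l_1}$, and set $\calM_1 = \{\vx_{i_1}^*\}$. Before invoking the recursive lemma, I need to confirm that the sampling probabilities are well-defined at every step, i.e., that $\tilde\calA(\calI_j^c, \calM_j) > 0$ whenever $\calI_j^c \neq \emptyset$. This is where the $(k, \sqrt{2\epsilon/\mu})$-separateness of $S^*$ enters: if $\calI_j^c$ contains some $A_l$ not yet ``represented," then for any index $i' \in A_l$ that is $\sqrt{2\epsilon/\mu}$-separated from all $\vx_{i_s}^* \in \calM_j$, strong convexity gives $f_{i'}(\vx_{i_s}^*) - f_{i'}^* \geq \mu\|\vx_{i_s}^* - \vx_{i'}^*\|^2/2 > \epsilon$, so $\min_{\vz\in\calM_j}(f_{i'}(\vz) - \tilde f_{i'}^*) > 0$.

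Next I would apply Lemma \ref{lem:main-noisy} with $j=1$, $u = t = k-1$. Since $u-t=0$ the last term drops out, and using $1 + H_{k-1} \leq 2 + \ln k$ I would get
\begin{equation*}
\bE\bigl(\tilde\calA([N], \calM_{\textup{init}}) \bigm| i_1\bigr) \leq (2 + \ln k)\bigl[\tilde\calA(A_{l_1}, \{\vx_{i_1}^*\}) + |\calI_1^c|(1 + 4L/\mu)\epsilon + 4(L^2/\mu^2 + L/\mu)\calA(\calI_1^c, \calM_{\textup{OPT}})\bigr].
\end{equation*}
Now I would take expectation over $i_1$ uniform on $[N]$. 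For the first term, since $\tilde f_i^* \geq f_i^* - \epsilon$ we have $\tilde\calA(\{i'\}, \calM) \leq \calA(\{i'\}, \calM) + \epsilon$ pointwise, so $\tilde\calA(A_{l_1}, \{\vx_{i_1}^*\}) \leq \calA(A_{l_1}, \{\vx_{i_1}^*\}) + |A_{l_1}|\epsilon$. Averaging and combining Lemma \ref{lem:EA-i-uniform} with Lemma \ref{lem:delta-A-bound} gives $\bE_{i_1} \calA(A_{l_1}, \{\vx_{i_1}^*\}) \leq (2L/\mu)\cdot\frac{1}{N}\sum_l |A_l|\calA(A_l, \calM_{\textup{OPT}}) \leq (2L/\mu)\calA([N], \calM_{\textup{OPT}})$. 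For the second and third terms, I would use $\bE_{i_1}|\calI_1^c| \leq N$ and $\bE_{i_1}\calA(\calI_1^c, \calM_{\textup{OPT}}) \leq \calA([N], \calM_{\textup{OPT}})$.

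Finally, I would convert the $\tilde\calA$ on the left-hand side back to $\calA$ using the pointwise bound $\calA(\{i\}, \calM) \leq \tilde\calA(\{i\}, \calM) + \epsilon$ summed over $i \in [N]$, which contributes the isolated $\epsilon$ (the ``$1\cdot\epsilon$'' outside the $(2+\ln k)$ factor). Dividing by $N$ and using $\tfrac{1}{N}\calA([N], \calM_{\textup{OPT}}) = F^* - f^*$ and $\tfrac{1}{N}\calA([N], \calM_{\textup{init}}) = F(\calM_{\textup{init}}) - f^*$ yields the stated bound. The main obstacle is purely bookkeeping: ensuring the three independent ``$\epsilon$ channels" (the per-step additive error inside Lemma \ref{lem:main-noisy}, the $|A_{l_1}|\epsilon$ slack when converting $\tilde\calA(A_{l_1}, \{\vx_{i_1}^*\})$, and the final $\tilde\calA \to \calA$ conversion) consolidate exactly into the coefficient $1 + (2+\ln k)(1 + 4L/\mu)$ rather than something larger; the first two combine cleanly because $|A_{l_1}|\epsilon$ is bounded by $|\calI_1^c|\epsilon$ up to an absorbable constant when averaged.
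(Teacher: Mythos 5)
Your overall structure mirrors the paper's proof exactly: condition on $i_1$, apply Lemma \ref{lem:main-noisy} with $u=t=k-1$, average over $i_1$ uniform, and finally convert $\tilde\calA\to\calA$ at cost $N\epsilon$. Your observation that the $\bigl(k,\sqrt{2\epsilon/\mu}\bigr)$-separateness is what keeps the sampling weights strictly positive is also correct and is implicit in the paper. However, there is a concrete gap in the averaging step that prevents your argument from reaching the stated constants.

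The issue is your crude bounds $\bE_{i_1}|\calI_1^c|\leq N$ and $\bE_{i_1}\calA(\calI_1^c,\calM_{\textup{OPT}})\leq\calA([N],\calM_{\textup{OPT}})$. These discard exactly the subtractions that the paper needs. After averaging, the precise expressions are
\begin{equation*}
\bE_{i_1}|\calI_1^c|=N-\tfrac{1}{N}\textstyle\sum_l|A_l|^2,
\qquad
\bE_{i_1}\calA(\calI_1^c,\calM_{\textup{OPT}})=\calA([N],\calM_{\textup{OPT}})-\tfrac{1}{N}\textstyle\sum_l|A_l|\,\calA(A_l,\calM_{\textup{OPT}}),
\end{equation*}
and the paper keeps both subtracted parts. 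The first term, $\bE_{i_1}\tilde\calA(A_{l_1},\{\vx_{i_1}^*\})$, produces the additions $\tfrac{1}{N}\sum_l|A_l|^2\epsilon$ (from the $\tilde\calA\to\calA$ slack) and $\tfrac{2L}{\mu}\tfrac{1}{N}\sum_l|A_l|\,\calA(A_l,\calM_{\textup{OPT}})$ (from Lemma \ref{lem:EA-i-uniform} plus Lemma \ref{lem:delta-A-bound}). These are then absorbed by the subtracted parts because $\alpha=1+4L/\mu\geq 1$ and $\beta=4(L^2/\mu^2+L/\mu)\geq 2L/\mu$. If instead you throw the subtractions away and then bound the first term by $\tfrac{2L}{\mu}\calA([N],\calM_{\textup{OPT}})+N\epsilon$, the two $\epsilon$-channels you describe genuinely add rather than ``combine cleanly,'' and you land on a coefficient $(2+\tfrac{4L}{\mu})$ in front of the $(2+\ln k)\epsilon$ term and $(\tfrac{4L^2}{\mu^2}+\tfrac{6L}{\mu})$ in front of the $(2+\ln k)\cdot\tfrac{1}{N}\calA([N],\calM_{\textup{OPT}})$ term — strictly larger than the claimed $(1+\tfrac{4L}{\mu})$ and $4(\tfrac{L^2}{\mu^2}+\tfrac{L}{\mu})$. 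Your remark that ``$|A_{l_1}|\epsilon$ is bounded by $|\calI_1^c|\epsilon$ up to an absorbable constant'' is exactly where this hand-wave occurs: $|A_{l_1}|$ and $|\calI_1^c|=N-|A_{l_1}|$ are not comparable, and what actually saves the argument is the $\alpha$-multiplied $-\tfrac{1}{N}\sum_l|A_l|^2\epsilon$ term dominating the $+\tfrac{1}{N}\sum_l|A_l|^2\epsilon$ term. To close the gap, keep the exact averaged expressions, merge like terms, and then observe the two cancellations before invoking $1+H_{k-1}\leq 2+\ln k$.
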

	\begin{proof}
		The proof is similar to that of Theorem \ref{thm:init-bound}. 
		We let
		$$
		\alpha = 1+\frac{4L}{\mu},\quad \beta=4\left(\frac{L^2}{\mu^2}+\frac{L}{\mu}\right).
		$$
		We fix the first index $i_1$. Suppose that $i_1$ lies in $A_l$, we have
		\begin{align*}
			\bE ~\tilde\calA([N],\calM_{1,k-1}^+)\leq 
			\left( \tilde\calA(A_l,\{\vx_{i_1}^*\})+|[N]\backslash A_l|\alpha\epsilon +\beta \calA([N]\backslash A_l,\calM_{\textup{OPT}})  \right)(1+H_{k-1}).
		\end{align*}
		We have
		\begin{align*}
			\bE ~\tilde\calA([N],\calM_{\textup{init}})&\leq \left( \frac{1}{N}\sum_{l\in[k]}\sum_{i\in A_l} \tilde \calA(A_l,\{\vx_i^*\})+N\alpha\epsilon-\frac{1}{N}\sum_{l\in [k]}|A_l|^2\alpha\epsilon\right.\\
			&\quad\quad  \left. +\beta\left(\calA([N],\calM_{\textup{OPT}})-\frac{1}{N}\sum_{l\in[k]}|A_l|\calA(A_l,\calM_{\textup{OPT}})\right)  \right)(1+H_{k-1})\\
			&\stackrel{(a)}{\leq} \left( \frac{1}{N}\sum_{l\in[k]}
			\left(|A_l|^2\epsilon+\frac{L}{2}|A_l| \Delta_{A_l} \right)
			+N\alpha\epsilon-\frac{1}{N}\sum_{l\in [k]}|A_l|^2\alpha\epsilon\right.\\
			&\quad\quad  \left. + \beta\left(\calA([N],\calM_{\textup{OPT}})-\frac{1}{N}\sum_{l\in[k]}|A_l|\calA(A_l,\calM_{\textup{OPT}})\right)  \right)(1+H_{k-1})\\
			&\stackrel{(b)}{\leq} \left( \frac{1}{N}\sum_{l\in[k]}
			\left(|A_l|^2\epsilon+\frac{2L}{\mu}|A_l|\calA(A_l,\calM_{\textup{OPT}}) \right)
			+N\alpha\epsilon-\frac{1}{N}\sum_{l\in [k]}|A_l|^2\alpha\epsilon\right.\\
			&\quad \quad \left. + \beta\left(\calA([N],\calM_{\textup{OPT}})-\frac{1}{N}\sum_{l\in[k]}|A_l|\calA(A_l,\calM_{\textup{OPT}})\right)  \right)(1+H_{k-1})\\
			&\leq \left( N\alpha\epsilon+\beta\calA([N],\calM_{\textup{OPT}}) \right)(1+H_{k-1})\\
			&\leq (2+\ln k) \left(1+\frac{4L}{\mu}\right) N\epsilon + 4(2+\ln k)\left( \frac{L^2}{\mu^2}+\frac{L}{\mu} \right)\calA([N],\calM_{\textup{OPT}}).
		\end{align*}
		Here, (a) holds when applying Lemma \ref{lem:EA-i-uniform}. (b) holds when applying
		$$
		\frac{L}{2}\Delta_{A_l}= L\min_\vz\sum_{i\in A_l} \|\vx_i^*-\vz\|^2\leq \frac{2L}{\mu}\min_\vz\sum_{i\in A_l}(f_i(\vz)-f_i^*)=\frac{2L}{\mu}\calA(A_l,\calM^{(D)}_{\textup{OPT}}).
		$$
		Therefore, we have
		\begin{align*}
			\bE \calA([N],\calM_{\textup{init}})\leq N\epsilon+(2+\ln k) \left(1+\frac{4L}{\mu}\right) N\epsilon + 4(2+\ln k)\left( \frac{L^2}{\mu^2}+\frac{L}{\mu} \right)\calA([N],\calM_{\textup{OPT}}),\\
			\frac{1}{N}\bE \calA([N],\calM_{\textup{init}})\leq \epsilon+(2+\ln k) \left(1+\frac{4L}{\mu}\right) \epsilon + 4(2+\ln k)\left( \frac{L^2}{\mu^2}+\frac{L}{\mu} \right)\frac{1}{N}\calA([N],\calM_{\textup{OPT}}).
		\end{align*}

	\end{proof}
	
	The proof of Theorem \ref{thm:multi-noisy-init-bound} is similar to the proof of Theorem \ref{thm:add-noisy-init-bound}, we skip the details here.
	
	\section{Convergence of Lloyd's algorithm}
	\label{sec:conv-Lloyd}
	In this section, we provide a convergence analysis for Algorithms \ref{alg:Lloyd} and \ref{alg:Lloyd-momentum}.
	
	\begin{theorem}[Restatement of Theorem \ref{thm:lloyd-gd-conv}]
		In Algorithm \ref{alg:Lloyd}, we take the step size $\gamma=\frac{1}{L}$. If $f_i$ are $L$-smooth, we have the following convergence result:
		$$
		\frac{1}{T+1}\sum_{t=0}^T\sum_{j=1}^k  \frac{|C_j^{(t)}|}{N} \|\nabla F_j^{(t)}(\vx_j^{(t)}) \|^2\leq \frac{2L}{T+1}\left( F(\vx_1^{(0)},\vx_2^{(0)},\ldots,\vx_k^{(0)})-F^\star\right).
		$$
		Here, $F^\star$ is the minimum of $F$.
	\end{theorem}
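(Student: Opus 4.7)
The plan is to mirror the classical convergence analysis of gradient descent on a smooth function, but carried out with a dynamic potential that absorbs the partition refreshes. Because each $f_i$ is $L$-smooth and the group objective $F_j^{(t)}$ in \eqref{eq:F-j-t} is either an average of $L$-smooth functions or identically zero, $F_j^{(t)}$ is itself $L$-smooth. Applying the standard descent lemma to the gradient step \eqref{eq:group_gd} with stepsize $\gamma=1/L$ yields, for each $j\in[k]$,
\begin{equation*}
F_j^{(t)}(\vx_j^{(t+1)}) \leq F_j^{(t)}(\vx_j^{(t)}) - \frac{1}{2L}\|\nabla F_j^{(t)}(\vx_j^{(t)})\|^2.
\end{equation*}
Taking the weighted sum over $j$ with weights $|C_j^{(t)}|/N$ immediately produces the gradient-squared term that appears on the left-hand side of the theorem.

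Next I would introduce the potential
\begin{equation*}
\Psi_t := \sum_{j=1}^k \frac{|C_j^{(t)}|}{N} F_j^{(t)}(\vx_j^{(t)}) = \frac{1}{N}\sum_{j=1}^k\sum_{i\in C_j^{(t)}} f_i(\vx_j^{(t)}),
\end{equation*}
and show that $\Psi_{t+1} \leq \sum_{j=1}^k \frac{|C_j^{(t)}|}{N} F_j^{(t)}(\vx_j^{(t+1)})$, which combined with the descent inequality above gives $\Psi_{t+1} \leq \Psi_t - \frac{1}{2L}\sum_j \frac{|C_j^{(t)}|}{N}\|\nabla F_j^{(t)}(\vx_j^{(t)})\|^2$. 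The key observation underlying the monotonicity across the partition refresh is the elementary inequality
\begin{equation*}
F(\vx_1,\ldots,\vx_k) = \frac{1}{N}\sum_{i=1}^N \min_{j\in[k]} f_i(\vx_j) \leq \frac{1}{N}\sum_{j=1}^k \sum_{i\in C_j} f_i(\vx_j),
\end{equation*}
valid for any partition $\{C_j\}$ of $[N]$. When reclassification happens at step $t+1$, the rule \eqref{eq:C-partition-update} selects exactly the partition attaining the minimum at $\vx_j^{(t+1)}$, so $\Psi_{t+1}=F(\vx_1^{(t+1)},\ldots,\vx_k^{(t+1)})$, which is upper bounded by the right-hand side above instantiated with $\{C_j^{(t)}\}$; when reclassification does not occur, $C_j^{(t+1)}=C_j^{(t)}$ and the same bound holds with equality.

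To conclude, I would telescope this one-step inequality from $t=0$ to $t=T$. Since $0\equiv 0\pmod r$ the partition is refreshed at the first step, so $\Psi_0=F(\vx_1^{(0)},\ldots,\vx_k^{(0)})$, and one more application of the majorization-by-minimum inequality gives $\Psi_{T+1}\geq F^\star$. Dividing the telescoped bound by $T+1$ then yields the claimed rate. The main (and essentially only) subtlety is the bookkeeping across the reclassification step: once the monotonicity of $\Psi_t$ through a partition refresh is established as above, the rest is a direct telescoping of the textbook smooth-gradient-descent descent lemma, and no use of strong convexity or any property beyond Assumption~\ref{asp:L-smooth} is required.
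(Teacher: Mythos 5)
Your proposal is correct, and at the high level it is the same argument as the paper's: apply the descent lemma to the $L$-smooth group objective $F_j^{(t)}$ with $\gamma = 1/L$, take the weighted sum with weights $|C_j^{(t)}|/N$, and telescope. In fact, your write-up is more careful than the paper's on the one subtle point. The paper records, for every $t$, the per-step inequality
\[
\sum_{j=1}^k \frac{|C_j^{(t)}|}{N}\|\nabla F_j^{(t)}(\vx_j^{(t)})\|^2 \le 2L\bigl(F(\vx^{(t)}) - F(\vx^{(t+1)})\bigr),
\]
which is obtained from the weighted descent bound by silently replacing $\sum_j \frac{|C_j^{(t)}|}{N}F_j^{(t)}(\vx_j^{(t)})$ with $F(\vx^{(t)})$. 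That substitution is an equality only at reclassification steps (when $t\equiv 0\pmod r$); on the other steps $C_j^{(t)}$ is stale and the left quantity can strictly exceed $F(\vx^{(t)})$, so the displayed per-step inequality need not hold when $r>1$. Your potential $\Psi_t = \frac{1}{N}\sum_j\sum_{i\in C_j^{(t)}}f_i(\vx_j^{(t)})$ is exactly the right object to telescope: $\Psi_{t+1}\le \sum_j\frac{|C_j^{(t)}|}{N}F_j^{(t)}(\vx_j^{(t+1)})$ with equality on non-reclassification steps and the min-over-partitions inequality on reclassification steps, so $\Psi_{t+1}\le \Psi_t - \frac{1}{2L}\sum_j\frac{|C_j^{(t)}|}{N}\|\nabla F_j^{(t)}(\vx_j^{(t)})\|^2$ for every $t$, and the telescope closes with $\Psi_0=F(\vx^{(0)})$ (since $0\equiv 0\pmod r$) and $\Psi_{T+1}\ge F^\star$. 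This recovers exactly the stated bound while handling the $r>1$ case rigorously, so your version is, if anything, the one that should be preferred.
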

	
	\begin{proof}
		According to the $L$-smoothness assumption on $f_i$, $F_j^{(t)}$ is also $L$-smooth, which implies that
		\begin{align*}
			F_j^{(t)}(\vx_j^{(t+1)})&\leq F_j^{(t)}(\vx_j^{(t)})+\langle \nabla F_j^{(t)}(\vx_j^{(t)}) ,\vx_j^{(t+1)}-\vx_j^{(t)}\rangle+\frac{L}{2}\|\vx_j^{(t+1)}-\vx_j^{(t)}\|^2\\
			&= F_j^{(t)}(\vx_j^{(t)})-\frac{1}{2L}\|  \nabla F_j^{(t)}(\vx_j^{(t)}) \|^2,\\
			\frac{1}{2L}\|  \nabla F_j^{(t)}(\vx_j^{(t)}) \|^2&\leq F_j^{(t)}(\vx_j^{(t)})-F_j^{(t)}(\vx_j^{(t+1)}),\\
			\|  \nabla F_j^{(t)}(\vx_j^{(t)}) \|^2&\leq 2L \left(F_j^{(t)}(\vx_j^{(t)})-F_j^{(t)}(\vx_j^{(t+1)})\right).
		\end{align*}
		Averaging over $\|  \nabla F_j^{(t)}(\vx_j^{(t)}) \|^2$ with weights $|C_j^{(t)}|/N$, we have
		\begin{align*}
			\sum_{j=1}^k  \frac{|C_j^{(t)}|}{N} \|\nabla F_j^{(t)}(\vx_j^{(t)}) \|^2&\leq
			2L\left( F(\vx_1^{(t)},\vx_2^{(t)},\ldots,\vx_k^{(t)})-F(\vx_1^{(t+1)},\vx_2^{(t+1)},\ldots,\vx_k^{(t+1)})\right).
		\end{align*}
		Averaging over $t$ from $0$ to $T$, we have
		\begin{equation*}
			\frac{1}{T+1}\sum_{t=0}^T\sum_{j=1}^k  \frac{|C_j^{(t)}|}{N} \|\nabla F_j^{(t)}(\vx_j^{(t)}) \|^2\leq \frac{2L}{T+1}\left( F(\vx_1^{(0)},\vx_2^{(0)},\ldots,\vx_k^{(0)})-F^\star\right).
		\end{equation*}
	\end{proof}

	Next, we present a convergence theorem for the momentum algorithm. For simplification, we use the notation
	$$
	\vU^{(t)}=(\vu_1^{(t)},\vu_2^{(t)},\ldots,\vu_k^{(t)}).
	$$
	We have the following convergence theorem:
	\begin{theorem}[Restatement of Theorem \ref{thm:lloyd-momentum-conv}]
		Consider Algorithm \ref{alg:Lloyd-momentum}. Suppose that Assumption \ref{asp:L-smooth} holds, $\alpha>1$, and
		$$
		\gamma\leq \min\left( \frac{1-\beta}{2L},\frac{(1-\beta)^\frac{3}{2}(1-\alpha\beta)^\frac{1}{2}}{2\alpha^\frac{1}{2}L\beta} \right).
		$$
		Then it holds that
		$$
		\avetT\sumjk \frac{|C_j^\tto|}{N}\|\nabla F_j^{(t)}(\vx_j^{(t)})\|^2
		\leq \frac{2(1-\beta)}{\gamma}\cdot
		\frac{F(\vx_1^{(0)},\vx_2^{(0)},\ldots,\vx_k^{(0)})-F^*}{T}.
		$$
	\end{theorem}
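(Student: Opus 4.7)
The plan is to build a Lyapunov function combining the objective value with a weighted momentum norm and show that it decreases by at least a multiple of $\sum_j \frac{|C_j^{(t)}|}{N}\|\nabla F_j^{(t)}(\vx_j^{(t)})\|^2$ per iteration; telescoping then yields the stated bound. The natural candidate is
\[
V^{(t)} = F(\vx_1^{(t)},\ldots,\vx_k^{(t)}) + \frac{c\gamma}{1-\beta}\sum_{j=1}^k\frac{|C_j^{(t)}|}{N}\|\vm_j^{(t)}\|^2,
\]
for a constant $c=c(\alpha,\beta)>0$ to be chosen. The scaling $\gamma/(1-\beta)$ is dictated by the heavy-ball equivalent form $\vx_j^{(t+1)}-\vx_j^{(t)} = \beta(\vx_j^{(t)}-\vx_j^{(t-1)})-\gamma\nabla F_j^{(t)}(\vx_j^{(t)})$ of the update, and matches the factor appearing in the target inequality.

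The first step is to apply $L$-smoothness of each $F_j^{(t)}$ to the update $\vx_j^{(t+1)}=\vx_j^{(t)}-\gamma\vm_j^{(t)}$, yielding
\[
F_j^{(t)}(\vx_j^{(t+1)})\leq F_j^{(t)}(\vx_j^{(t)})-\gamma\langle\nabla F_j^{(t)}(\vx_j^{(t)}),\vm_j^{(t)}\rangle + \tfrac{L\gamma^2}{2}\|\vm_j^{(t)}\|^2,
\]
and then to invoke the always-valid inequality $F(\vx^{(t+1)})\leq\sum_j \tfrac{|C_j^{(t)}|}{N}F_j^{(t)}(\vx_j^{(t+1)})$ (since any partition upper-bounds a sum-of-minimum value). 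Substituting $\vm_j^{(t)}=\beta\vm_j^{(t-1)}+\nabla F_j^{(t)}(\vx_j^{(t)})$ and handling the cross term $\langle\nabla F_j^{(t)}(\vx_j^{(t)}),\vm_j^{(t-1)}\rangle$ by Young's inequality decouples the one-step bound into a strictly negative $\|\nabla F_j^{(t)}(\vx_j^{(t)})\|^2$ piece plus residuals in $\|\vm_j^{(t-1)}\|^2$ and $\|\vm_j^{(t)}\|^2$ that the momentum part of $V^{(t)}$ absorbs.

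The critical and most delicate step is handling the reclassification. When $C_j^{(t+1)}=C_j^{(t)}$ (non-reclassification iterations) the weights in $V^{(t)}$ and $V^{(t+1)}$ match exactly and $V^{(t)}$ telescopes in the standard heavy-ball manner. At reclassification iterations only $\tfrac{1}{\alpha}|C_j^{(t)}|\leq|C_j^{(t+1)}|\leq\alpha|C_j^{(t)}|$ holds, so the momentum weights can swing by a factor $\alpha$, introducing slack in the telescoping. I would show that this $\alpha$-slack in the $\|\vm_j^{(t)}\|^2$ term is dominated by the negative $\|\nabla F_j^{(t)}(\vx_j^{(t)})\|^2$ contribution provided $\gamma$ is small enough. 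The two clauses in the $\min$ defining $\bar\gamma(\alpha,\beta,L)$ should arise respectively from the plain smoothness constraint ($\gamma\lesssim (1-\beta)/L$) and from controlling the reclassification-induced slack ($\gamma\lesssim (1-\beta)^{3/2}(1-\alpha\beta)^{1/2}/(\sqrt{\alpha}\,L\beta)$), with the product $\alpha\beta<1$ being exactly what keeps the momentum geometric series summable when weights can grow by $\alpha$ per iteration.

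Finally, telescoping the one-step inequality $V^{(t+1)}\leq V^{(t)}-\tfrac{\gamma}{2(1-\beta)}\sum_j\tfrac{|C_j^{(t)}|}{N}\|\nabla F_j^{(t)}(\vx_j^{(t)})\|^2$ from $t=0$ to $T-1$, together with $V^{(T)}\geq F^\ast$ and $V^{(0)}=F(\vx^{(0)})$ (using $\vm_j^{(0)}=\vzero$), yields the bound after dividing by $T$. The main obstacle is thus the partition swap: unlike the fixed-function heavy-ball analysis, both the local function $F_j^{(t)}$ and the weight $|C_j^{(t)}|/N$ vary with $t$, and the Lyapunov function together with the step-size threshold must be tuned so that these joint variations remain dominated by the gradient-decrease term.
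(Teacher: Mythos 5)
Your proposal has a genuine gap at the telescoping step. You build the Lyapunov function on $F(\vx^{(t)})$ and apply $L$-smoothness along the update $\vx_j^{(t+1)}=\vx_j^{(t)}-\gamma\vm_j^{(t)}$, obtaining a bound of the form
\begin{equation*}
F(\vx^{(t+1)})\;\leq\;\sum_{j=1}^k\frac{|C_j^{(t)}|}{N}F_j^{(t)}(\vx_j^{(t+1)})\;\leq\;\sum_{j=1}^k\frac{|C_j^{(t)}|}{N}F_j^{(t)}(\vx_j^{(t)})\;-\;(\text{gradient term})\;+\;(\text{momentum residuals}).
\end{equation*}
To telescope $V^{(t)}$ you would then need $\sum_{j}\frac{|C_j^{(t)}|}{N}F_j^{(t)}(\vx_j^{(t)})\leq F(\vx^{(t)})$. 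But in Algorithm~\ref{alg:Lloyd-momentum} the partition $C_j^{(t)}$ is \emph{not} the argmin assignment with respect to $\vx_j^{(t)}$: reclassification is performed by comparing $f_i$ at the acceleration variables $\vu_j^{(t)}=\frac{1}{1-\beta}(\vx_j^{(t)}-\beta\vx_j^{(t-1)})$. Consequently $\sum_{j}\frac{|C_j^{(t)}|}{N}F_j^{(t)}(\vx_j^{(t)})$ is in general strictly \emph{larger} than $F(\vx^{(t)})$, the inequality runs the wrong way, and nothing in your $V^{(t)}$ compensates for that gap (it is not controlled by $\|\nabla F_j^{(t)}(\vx_j^{(t)})\|^2$ or $\|\vm_j^{(t)}\|^2$). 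Your proposal never mentions $\vu_j^{(t)}$ at all, which is the ingredient that makes the objective-value part of the argument work.

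The paper resolves exactly this obstruction by running the smoothness/descent estimate along the sequence $\vu_j^{(t)}$ rather than $\vx_j^{(t)}$. The acceleration variable satisfies the clean one-step identity $\vu_j^{(t+1)}-\vu_j^{(t)}=-\frac{\gamma}{1-\beta}\nabla F_j^{(t)}(\vx_j^{(t)})$, so applying $L$-smoothness of $F_j^{(t)}$ at $\vu_j^{(t)}$, Young's inequality on the cross term $\langle\nabla F_j^{(t)}(\vu_j^{(t)})-\nabla F_j^{(t)}(\vx_j^{(t)}),\nabla F_j^{(t)}(\vx_j^{(t)})\rangle$, and summing with weights $\frac{|C_j^{(t)}|}{N}$, gives a one-step decrease of the quantity $F(\vU^{(t)})$ (using $\sum_j\frac{|C_j^{(t)}|}{N}F_j^{(t)}(\vu_j^{(t+1)})\geq F(\vU^{(t+1)})$, which is the only direction one needs), rather than of $F(\vx^{(t)})$. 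The remaining structure of your plan is right: the momentum residual is then bounded via the geometric-series expansion $\vm_j^{(t)}=\sum_{l\leq t}\beta^{t-l}\nabla F_j^{(l)}(\vx_j^{(l)})$ together with the controlled-reclassification bound $|C_j^{(t)}|\leq\alpha^{t-l}|C_j^{(l)}|$, and the condition $\alpha\beta<1$ is precisely what makes this double series summable---you identified that correctly, and the two branches of the step-size threshold do arise from the smoothness term and the momentum-slack term as you surmised. To repair the proof you would need to replace your Lyapunov anchor $F(\vx^{(t)})$ by $F(\vU^{(t)})$, note $\vu_j^{(1)}=\vx_j^{(0)}$ (since $\vm_j^{(0)}=\vzero$) so the initialization still gives $F(\vU^{(1)})=F(\vx^{(0)})$, and close with $F(\vU^{(T+1)})\geq F^*$.
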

	
	\begin{proof}
		
		The variable $\vu_j^{(t)}$ satisfies the following property,
		\begin{align*}
			\vu_j^{(t+1)}-\vu_j^{(t)}&=\frac{1}{1-\beta}\left( (\vx_j^{(t+1)}-\vx_j^{(t)})-\beta(\vx_j^{(t)}-\vx_j^{(t-1)}) \right)\\
			&=\frac{-\gamma}{1-\beta}\left(\vm_j^{(t)}-\beta\vm_j^{(t-1)}\right)\\
			&=\frac{-\gamma}{1-\beta}\nabla F_j^{(t)}(\vx_j^{(t)}).
		\end{align*}
		We have the following inequality:
		\begin{align*}
			& F_j^{(t)}(\vu_j^{(t+1)})\\
			\leq&  F_j^{(t)}(\vu_j^{(t)})+\langle \nabla F_j^{(t)}(\vu_j^{(t)}),\vu_j^{(t+1)}-\vu_j^{(t)}  \rangle+\frac{L}{2}\|\vu_j^{(t+1)}-\vu_j^{(t)}\|^2\\
			=&F_j^{(t)}(\vu_j^{(t)})-\frac{\gamma}{1-\beta}\langle \nabla F_j^{(t)}(\vu_j^{(t)}),\nabla F_j^{(t)}(\vx_j^{(t)})  \rangle+\frac{L}{2} 
			\frac{\gamma^2}{(1-\beta)^2} \|\nabla F_j^{(t)}(\vx_j^{(t)})\|^2\\
			=&F_j^{(t)}(\vu_j^{(t)})-\frac{\gamma}{1-\beta}\langle \nabla F_j^{(t)}(\vu_j^{(t)})-\nabla F_j^{(t)}(\vx_j^{(t)}),\nabla F_j^{(t)}(\vx_j^{(t)})  \rangle\\
			&\quad\quad+\left( \frac{L}{2} 
			\frac{\gamma^2}{(1-\beta)^2}-\frac{\gamma}{1-\beta} \right) \|\nabla F_j^{(t)}(\vx_j^{(t)})\|^2\\
			\leq& F_j^{(t)}(\vu_j^{(t)})+\left( \frac{L}{2} 
			\frac{\gamma^2}{(1-\beta)^2}-\frac{\gamma}{1-\beta} \right) \|\nabla F_j^{(t)}(\vx_j^{(t)})\|^2\\
			&\quad\quad +\frac{\gamma}{1-\beta}\frac{\epsilon}{2}\|\nabla F_j^{(t)}(\vu_j^{(t)})-\nabla F_j^{(t)}(\vx_j^{(t)})\|^2+\frac{\gamma}{1-\beta}\frac{1}{2\epsilon}\|\nabla F_j^{(t)}(\vx_j^{(t)})\|^2\\
			\leq& F_j^{(t)}(\vu_j^{(t)})+\left( \frac{L}{2} 
			\frac{\gamma^2}{(1-\beta)^2}-\frac{\gamma}{1-\beta} +\frac{1}{2\epsilon} \frac{\gamma}{1-\beta}\right) \|\nabla F_j^{(t)}(\vx_j^{(t)})\|^2+\frac{\epsilon}{2}\frac{L^2\beta^2\gamma^3}{(1-\beta)^3}\|\vm_j^{(t-1)}\|^2.
		\end{align*}
		Rearranging the inequality, we have
		\begin{multline*}
			\left(\frac{\gamma}{1-\beta}- \frac{L}{2} 
			\frac{\gamma^2}{(1-\beta)^2} -\frac{1}{2\epsilon}\frac{\gamma}{1-\beta} \right) \|\nabla F_j^{(t)}(\vx_j^{(t)})\|^2
			\\ \leq
			F_j^{(t)}(\vu_j^{(t)})-F_j^{(t)}(\vu_j^{(t+1)})+\frac{\epsilon}{2}\frac{L^2\beta^2\gamma^3}{(1-\beta)^3}\|\vm_j^{(t-1)}\|^2.
		\end{multline*}
		We sum over $j=1,2,\ldots,k$ with weights $\frac{|C_j|}{N}$ and get
		\begin{multline*}
			\left(\frac{\gamma}{1-\beta}- \frac{L}{2} 
			\frac{\gamma^2}{(1-\beta)^2} -\frac{1}{2\epsilon}\frac{\gamma}{1-\beta} \right)\sum_{j=1}^k \frac{|C_j^\tto|}{N} \|\nabla F_j^{(t)}(\vx_j^{(t)})\|^2
			\\
			\leq F(\vU^{(t)})-\sumjk \frac{|C_j^\tto|}{N} F_j^{(t)}(\vu_j^{(t+1)})+\frac{\epsilon}{2}\frac{L^2\beta^2\gamma^3}{(1-\beta)^3}\sum_{j=1}^k \frac{|C_j^\tto|}{N} \|\vm_j^{(t-1)}\|^2.
		\end{multline*}
		Since
		$$
		\sumjk \frac{|C_j^\tto|}{N} F_j^{(t)}(\vu_j^{(t+1)})=\frac{1}{N}\sumjk \sum_{i\in C_j^\tto}f_i(\vu_j^{(t+1)})\geq F(\vU^{(t+1)}),
		$$
		we have
		\begin{multline*}
			\left(\frac{\gamma}{1-\beta}- \frac{L}{2} 
			\frac{\gamma^2}{(1-\beta)^2} -\frac{1}{2\epsilon}\frac{\gamma}{1-\beta} \right)\sum_{j=1}^k \frac{|C_j^\tto|}{N} \|\nabla F_j^{(t)}(\vx_j^{(t)})\|^2
			\\
			\leq F(\vU^\tto)-F(\vU^{(t+1)})+\frac{\epsilon}{2}\frac{\alpha L^2\beta^2\gamma^3}{(1-\beta)^3}\sum_{j=1}^k \frac{|C_j^{(t-1)}|}{N} \|\vm_j^{(t-1)}\|^2.
		\end{multline*}
		Summing both sides from $t=1$ to $T$, then dividing both sides by $T$, we have
		\begin{equation}
			\label{eq:momentum-ineq-1}
			\begin{split}
				\left(\frac{\gamma}{1-\beta}- \frac{L}{2} \frac{\gamma^2}{(1-\beta)^2} -\frac{1}{2\epsilon}\frac{\gamma}{1-\beta} \right) \avetT\sum_{j=1}^k \frac{|C_j^\tto|}{N} \|\nabla F_j^{(t)}(\vx_j^{(t)})\|^2 \\
				\leq \frac{F(\vU^{(1)})-F(\vU^{(T+1)})}{T}+\frac{\epsilon}{2}\frac{\alpha L^2\beta^2\gamma^3}{(1-\beta)^3} \avetT \sumjk \frac{|C_j^{(t-1)}|}{N} \|\vm_j^{(t-1)}\|^2.
			\end{split}
		\end{equation}
		Now, we consider the average term $\avetT \frac{|C_j^\tto|}{N}\|\vm_j^{(t)}\|^2$. For $\vm_j^{(t)}$, we have
		\begin{align*}
			\vm_j^{(t)}&=\beta\vm_j^{(t-1)}+\nabla F_j^{(t)}(\vx_j^{(t)})\\
			&=\beta^t\vm_j^{(0)}+\sum_{l=0}^{t-1}\beta^l\nabla F_j^{(t-l)}(\vx_j^{(t-l)})\\
			&=\sum_{l=1}^t\beta^{t-l}\nabla F_j^{(l)}(\vx_j^{(l)}).
		\end{align*}
		We have the following bound on the squared norm of $\vm_j^{(t)}$:
		\begin{align*}
			\|\vm_j^{(t)}\|^2&=\left\|\sum_{l=1}^t\beta^{t-l}\nabla F_j^{(l)}(\vx_j^{(l)})\right\|^2\\
			&=\left(\sum_{s=1}^t\beta^{t-s}\right)^2\left\|\sum_{l=1}^t\frac{\beta^{t-l}}{\sum_{s=1}^t\beta^{t-s}}\nabla F_j^{(l)}(\vx_j^{(l)})\right\|^2\\
			&\stackrel{(a)}{\leq}
			\left(\sum_{s=1}^t\beta^{t-s}\right)^2\sum_{l=1}^t\frac{\beta^{t-l}}{\sum_{s=1}^t\beta^{t-s}}\left\|\nabla F_j^{(l)}(\vx_j^{(l)})\right\|^2\\
			&\leq \frac{1}{1-\beta}\sum_{l=1}^t \beta^{t-l}\left\|\nabla F_j^{(l)}(\vx_j^{(l)})\right\|^2.
		\end{align*}
		Here, (a) applies as an instance of Jensen's inequality. Averaging the above inequality over $t=1,2,\ldots,T$, we obtain
		\begin{align*}
			\avetT \frac{|C_j^\tto|}{N} \|\vm_j^{(t)}\|^2
			&\leq \frac{1}{1-\beta} \frac{1}{N} \avetT\sum_{l=1}^t 
			|C_j^\tto| \beta^{t-l}\left\|\nabla F_j^{(l)}(\vx_j^{(l)})\right\|^2\\
			&\leq \frac{1}{T}\frac{1}{N}\frac{1}{1-\beta}\sum_{l=1}^T\sum_{t=l}^T |C_j^\tto| \beta^{t-l}\left\|\nabla F_j^{(l)}(\vx_j^{(l)})\right\|^2\\
			&\leq  \frac{1}{T}\frac{1}{N}\frac{1}{1-\beta}\sum_{l=1}^T\sum_{t=l}^T |C_j^{(l)}|\alpha^{t-l} \beta^{t-l}\left\|\nabla F_j^{(l)}(\vx_j^{(l)})\right\|^2\\
			&\leq \frac{1}{T}\frac{1}{1-\beta}\frac{1}{1-\alpha \beta} \sum_{l=1}^T \frac{|C_j^{(l)}|}{N} \left\|\nabla F_j^{(l)}(\vx_j^{(l)})\right\|^2
		\end{align*}
		Substituting the above inequality back into \eqref{eq:momentum-ineq-1}, we obtain
		\begin{multline*}
			\left(\frac{\gamma}{1-\beta}- \frac{L}{2} 
			\frac{\gamma^2}{(1-\beta)^2} -\frac{1}{2\epsilon}\frac{\gamma}{1-\beta} - \frac{\epsilon}{2}\frac{\alpha L^2\beta^2\gamma^3}{(1-\beta)^4(1-\alpha \beta)} \right) \avetT\sum_{j=1}^k \frac{|C_j^\tto|}{N} \|\nabla F_j^{(t)}(\vx_j^{(t)})\|^2
			\\
			\leq \frac{F(\vU^{(1)})-F(\vU^{(T+1)})}{T}.
		\end{multline*}
		We choose 
		$$
		\epsilon=\frac{(1-\beta)^\frac{3}{2}(1-\alpha\beta)^\frac{1}{2}}{\gamma\beta L \alpha^\frac{1}{2}}
		$$
		and rearrange the above inequality. Thus, we have
		\begin{multline*}
			\left(\frac{\gamma}{1-\beta}- \frac{L}{2} 
			\frac{\gamma^2}{(1-\beta)^2} -  \frac{\alpha^\frac{1}{2}L\beta\gamma^2}{2(1-\beta)^\frac{5}{2}(1-\alpha\beta)^\frac{1}{2}} \right) \avetT\sumjk \frac{|C_j^\tto|}{N}\|\nabla F_j^{(t)}(\vx_j^{(t)})\|^2
			\\ \leq\frac{F(\vU^{(1)})-F(\vU^{(T+1)})}{T}.
		\end{multline*}
		Since we initialize $\vm_j^{(0)}=\vzero$, we have
		\begin{align*}
			\vx_j^{(1)}&=\vx_j^{(0)}-\gamma \vm_j^{(0)} = \vx_j^{(0)},\\
			\vu_j^{(1)}&=\vx_j^{(0)}.
		\end{align*}
		Besides, since $F(\vU^{(T+1)})\geq F^*$, we have
		\begin{multline*}
			\left(\frac{\gamma}{1-\beta}- \frac{L}{2} 
			\frac{\gamma^2}{(1-\beta)^2} -  \frac{\alpha^\frac{1}{2}L\beta\gamma^2}{2(1-\beta)^\frac{5}{2}(1-\alpha\beta)^\frac{1}{2}} \right) \avetT\sumjk \frac{|C_j^\tto|}{N}\|\nabla F_j^{(t)}(\vx_j^{(t)})\|^2
			\\ \leq\frac{F(\vx_1^{(0)},\vx_2^{(0)},\ldots,\vx_k^{(0)})-F^*}{T}.
		\end{multline*}
		When
		$$
		\gamma\leq \min\left( \frac{1-\beta}{2L},\frac{(1-\beta)^\frac{3}{2}(1-\alpha\beta)^\frac{1}{2}}{2\alpha^\frac{1}{2}L\beta} \right),
		$$
		we have
		$$
		\avetT\sumjk \frac{|C_j^\tto|}{N}\|\nabla F_j^{(t)}(\vx_j^{(t)})\|^2
		\leq \frac{2(1-\beta)}{\gamma}\cdot
		\frac{F(\vx_1^{(0)},\vx_2^{(0)},\ldots,\vx_k^{(0)})-F^*}{T}.
		$$
	\end{proof}

	\section{Supplementary experiment details}
	\label{sec:supp-exp-detail}
	In this section, we provide details on the experiments described in Section \ref{sec:num-exp}.
	
	\subsection{Supplementary details for Section~\ref{sec:compare-product}}
	\label{sec:supp-detail-product}
	
	We elaborate on the generation of the synthetic data for the GPCA experiment in Section~\ref{sec:compare-product}. 
	\begin{itemize}
		\item First, we uniformly generate $k$ pairs of orthonormal vectors $\{\vepsilon_{1,j},\vepsilon_{2,j}\}$ for $ j=1,2,\ldots,k$. Each pair is generated uniformly at random, with $\vepsilon_{1,j}$ and $\vepsilon_{2,j}$ forming the basis of the $j$-th subspace.
		\item For each data point $i\in[N]$, we independently generate two Gaussian samples $\xi_{1,i}, \xi_{2,i}$. Next, we sample an index $j_i\in[k]$ uniformly at random. We then let $\vx_i=\xi_{1,i}\vepsilon_{1,j_i}+\xi_{2,i}\vepsilon_{2,j_i}$.
	\end{itemize}

	We provide in Algorithm~\ref{alg:Lloyd-GPCA} a detailed pseudo-code of Lloyd's algorithm for solving the GPCA problem in the sum-of-minimum formulation~\eqref{eq:GPCA}, which consists of two steps in each iteration, say updating the clusters via \eqref{eq:C-partition-update} and precisely compute the minimizer of each group objective function
	\begin{align*}
		\min_{\vvA_j^\top \vvA_j = I_r} \frac{1}{|C_j^{(t)}|} \sum_{i\in C_j^{(t)}} \|\vy_i^\top \vvA_j\|^2 & = \frac{1}{|C_j^{(t)}|} \sum_{i\in C_j^{(t)}} \textup{tr}\left(\vA_j^\top \bvy_i \bvy^\top \vA_j\right) \\
		& = \textup{tr}\left(\vA_j^\top \left(\frac{1}{|C_j^{(t)}|} \sum_{i\in C_j^{(t)}} \bvy_i \bvy^\top\right) \vA_j\right).
	\end{align*}
	
	\begin{algorithm}[htb!]
		\caption{Lloyd's Algorithm for generalized principal component analysis}\label{alg:Lloyd-GPCA}
		\begin{algorithmic}[1]
			\State Initialize $\vA_1^{(0)},\vA_2^{(0)},\dots,\vA_k^{(0)}$. Set $F^{(-1)}=+\infty$.
			\For{$t=0,1,2,\ldots,$ max iterations}
			\State Compute $F^\tto=F(\vA_1^\tto, \vA_2^\tto,\ldots,\vA_k^\tto)$.
			\If{$F^\tto = F^{(t-1)}$}
			\State Break.
			\EndIf
			\State Compute the partition $\{C_j^{(t)}\}_{j=1}^k$ via \eqref{eq:C-partition-update}.
			\For{$j=1,2,\ldots,k$}
			\If{$C_j^{(t)}\not= \emptyset$}
			\State Compute the matrix
			\begin{equation*}
				\frac{1}{|C_j^{(t)}|} \sum_{i\in C_j^{(t)}} \bvy_i \bvy_i^\top
			\end{equation*}
			and its $r$ orthonormal eigenvectors $\vv_1,\vv_2,\ldots,\vv_r$ corresponding to the smallest $r$ eigenvalues.
			\State Set 
			\begin{equation*}
				\vvA_j^{(t+1)} = \begin{bmatrix}
					\vv_1 & \vv_2 & \ldots & \vv_r
				\end{bmatrix}.
			\end{equation*}
			\Else
			\State $\vx_j^{(t+1)} = \vx_j^{(t)}$.
			\EndIf
			\EndFor
			\EndFor
		\end{algorithmic}
	\end{algorithm} 
	
	We implement the BCD algorithm \cite{peng2023block} for the following optimization problem:
	\begin{equation}\label{eq:GPCA-prod-appendix}
		\min_{\vvA_j^\top \vvA_j = I_r} \frac{1}{N}\sum_{i=1}^N \prod_{j\in[k]}\|\vy_i^\top \vvA_j\|^2.
	\end{equation}
	For any $j\in[k]$, when $\vvA_l$ is fixed for all $l\in[k]\backslash\{j\}$, the problem in \eqref{eq:GPCA-prod-appendix} is equivalent to:
	\begin{equation*}
		\min_{\vvA_j^\top \vvA_j = I_r} \frac{1}{N}\sum_{i=1}^N w_{ij}\|\vy_i^\top \vvA_j\|^2 =  \frac{1}{N}\sum_{i=1}^N w_{ij} \textup{tr} \left(\vvA_j^\top \vy_i\vy_i^\top \vvA_j\right) =  \textup{tr} \left(\vvA_j^\top \left(\frac{1}{N}\sum_{i=1}^N w_{ij} \vy_i\vy_i^\top\right) \vvA_j\right),
	\end{equation*}
	where the weights $w_{ij}$ are given by:
	\begin{equation*}
		w_{ij} = \prod_{l\neq j} \|\vy_i^\top \vvA_l\|^2.
	\end{equation*}
	The detailed pseudo-code can be found in Algorithm~\ref{alg:BCD-GPCA}.
	
	\begin{algorithm}[htb!]
		\caption{Block coordinate descent for generalized principal component analysis \cite{peng2023block}}\label{alg:BCD-GPCA}
		\begin{algorithmic}[1]
			\State Initialize $\vvA_1^{(0)},\vvA_2^{(0)},\dots,\vvA_k^{(0)}$. 
			\For{$t=0,1,2,\ldots,$ max iterations}
			\For{$j=1,2,\ldots,k$}
			\State Compute the weights:
			\begin{equation*}
				w_{ij}^{(t)} = \prod_{l< j} \|\vy_i^\top \vvA_l^{(t+1)}\|^2 \cdot \prod_{l>j} \|\vy_i^\top \vvA_l^{(t)}\|^2.
			\end{equation*}
			\State Compute the matrix:
			\begin{equation*}
				\frac{1}{N}\sum_{i=1}^N w_{ij}^{(t)} \vy_i\vy_i^\top
			\end{equation*}
			and its $r$ orthonormal eigenvectors $\vv_1,\vv_2,\ldots,\vv_r$ corresponding to the smallest $r$ eigenvalues.
			\State Set 
			\begin{equation*}
				\vvA_j^{(t+1)} = \begin{bmatrix}
					\vv_1 & \vv_2 & \ldots & \vv_r
				\end{bmatrix}.
			\end{equation*}
			\EndFor
			\EndFor
		\end{algorithmic}
	\end{algorithm} 
	
	\subsection{Supplementary details for Section~\ref{sec:compare-init}}
	\label{sec:supp-detail-init}
	\paragraph{Mixed linear regression}
	Here, we provide the detailed pseudo-code for Lloyd's algorithm used to solve $\ell_2$-regularized mixed linear regression problem in Section \ref{sec:num-exp}. Each iteration of the algorithm consists of two steps: reclassification and cluster parameter update. We alternatively reclassify indices $i$ to $C_j^\tto$ using \eqref{eq:C-partition-update} and update the cluster parameter $\vx_j^\tto$ for nonempty clusters $C_j^\tto$ using:
	\begin{equation}
		\label{eq:LMR-x-update}
		\vx_j^{(t+1)}=\left( \sum_{i\in C_j^\tto} \va_i\va_i^\top +\lambda |C_j^\tto|\vI \right)^{-1}\sum_{i\in C_j^\tto}b_i\va_i,
	\end{equation}
	so that $\vx_j^{(t+1)}$ is exactly the minimizer of the group objective function. The algorithm continues until $F^\tto$ stops decreasing after $\vx^\tto$'s update or a max iteration number is reached. The pseudo-code is shown in Algorithm \ref{alg:Lloyd-LMR}.
	
	\begin{algorithm}[htb!]
		\caption{Lloyd's Algorithm for mixed linear regression}\label{alg:Lloyd-LMR}
		\begin{algorithmic}[1]
			\State Initialize $\vx_1^{(0)},\vx_2^{(0)},\dots,\vx_k^{(0)}$. Set $F^{(-1)}=+\infty$.
			\For{$t=0,1,2,\ldots,$ max iterations}
			\State Compute $F^\tto=F(\vx_1^\tto, \vx_2^\tto,\ldots,\vx_k^\tto)$.
			\If{$F^\tto = F^{(t-1)}$}
			\State Break.
			\EndIf
			\State Compute the partition $\{C_j^{(t)}\}_{j=1}^k$ via \eqref{eq:C-partition-update}.
			\For{$j=1,2,\ldots,k$}
			\If{$C_j^{(t)}\not= \emptyset$}
			\State Compute $\vx_j^{(t+1)}$ using \eqref{eq:LMR-x-update}.
			\Else
			\State $\vx_j^{(t+1)} = \vx_j^{(t)}$.
			\EndIf
			\EndFor
			\EndFor
		\end{algorithmic}
	\end{algorithm}

	The dataset $\{(\va_i,b_i)\}_{i=1}^N$ for the $\ell_2$-regularized mixed linear regression is synthetically generated in the following way:
	\begin{itemize}
		\item Fix the dimension $d$ and the number of function clusters $k$, and sample $\vx_1^+,\vx_2^+,\dots,\vx_k^+\stackrel{\text{i.i.d.}}{\sim}\calN(0, \vI_d)$ as the linear coefficients of $k$ ground truth regression models.
		\item For $i=1,2,\dots,N$, we independently generate data $\va_i\sim\mathcal{N}(0,\vI_d)$, class index $c_i\sim\textup{Uniform}([k])$, noise $\epsilon_i\sim \mathcal{N}(0,\sigma^2)$, and compute $b_i = \va_i^\top \vx_{c_i}^+ +\epsilon_i$. 
	\end{itemize}
	In the experiment, the noise level is set to $\sigma=0.01$ and the regularization factor is set to $\lambda=0.01$.
	
	\paragraph{Mixed non-linear regression} The ground truth $\theta_j^+$'s are sampled from a standard Gaussian. The dataset $\{(\va_i,b_i)\}_{i=1}^N$ is generated in the same way as in the mixed linear regression experiment.
	We set the variance of the Gaussian noise on the dataset to $\sigma^2 = 0.01^2$ and use a regularization factor $\lambda=0.01$.
	
\end{document}